\documentclass[10pt, reqno]{amsart}
\usepackage{hyperref}
\usepackage{amsmath}
\usepackage{amscd}
\usepackage{amsthm}
\usepackage{amssymb}
\usepackage{latexsym}
\usepackage{eufrak}
\usepackage{euscript}
\usepackage{epsfig}
\usepackage{graphics}
\usepackage{array}
\usepackage{enumerate}
\usepackage{dsfont}
\usepackage{color}
\usepackage{wasysym}
\usepackage{graphicx}
\usepackage{amsfonts}
\usepackage{mathrsfs}
\usepackage{dsfont}
\usepackage{indentfirst}
\usepackage{esint}

\newcommand{\R}{{\mathbb R}}

\newcommand{\Q}{{\mathbb Q}}
\newcommand{\N}{{\mathbb N}}

\newcommand{\al}{{\alpha}}
\newcommand{\be}{{\beta}}

\newcommand{\G}{\mathcal{G}}
\newcommand{\RG}{\mathcal{RG}}
\newcommand{\RRG}{\mathcal{RRG}}
\newcommand{\FG}{\mathcal{FG}}
\newcommand{\diam}{\mathrm{diam}}

\newcommand{\ex}{\mathrm{Ext}}
\newcommand{\SM}{\mathrm{SM}}

\newtheorem{thm}{Theorem}[section]
\newtheorem{coro}[thm]{Corollary}
\newtheorem{lemma}[thm]{Lemma}
\newtheorem{pro}[thm]{Proposition}

\theoremstyle{definition}
\newtheorem{definition}[thm]{Definition}
\newtheorem{example}[thm]{Example}
\newtheorem{remark}[thm]{Remark}

\newtheorem{fact}[thm]{Fact}

\newcommand{\Hmm}[1]{\leavevmode{\marginpar{\tiny%
$\hbox to 0mm{\hspace*{-0.5mm}$\leftarrow$\hss}%
\vcenter{\vrule depth 0.1mm height 0.1mm width \the\marginparwidth}%
\hbox to 0mm{\hss$\rightarrow$\hspace*{-0.5mm}}$\\\relax\raggedright
#1}}}

\begin{document}

\title{Spectral distances on graphs}

\author{Jiao Gu}
\address{Jiao Gu, School of Science, Jiangnan University, 214122, Wuxi, China; Mathematics and Science college, Shanghai Normal University, 200234, Shanghai, China; Max Planck Institute for Mathematics in the Sciences,
04103, Leipzig, Germany.} \email{jiaogu@mis.mpg.de}

\author{Bobo Hua}
\address{Bobo Hua, Max Planck Institute for Mathematics in the Sciences,
04103, Leipzig, Germany; School of Mathematical Sciences, LMNS, Fudan University, Shanghai
200433, China.} \email{bobohua@mis.mpg.de}

\author{Shiping Liu}
\address{Shiping Liu, Max Planck Institute for Mathematics in the Sciences,
04103, Leipzig, Germany; Department of Mathematical Sciences, Durham University, DH1 3LE Durham, United Kingdom.} \email{shiping.liu@durham.ac.uk}

\begin{abstract}
By assigning a probability measure via the spectrum of the normalized Laplacian to each graph and
%For each graph, we associate the spectrum of the normalized
%Laplacian with a probability measure, called the spectral measure.
using $L^p$ Wasserstein distances between probability measures, we
define the corresponding spectral distances $d_p$ on the set of all
graphs. This approach can even be extended to measuring the distances between infinite graphs. We prove that the diameter of the set of graphs,
as a pseudo-metric space equipped with $d_1$,
%$L^1$ Wasserstein distance
is one. We further study the behavior of $d_1$ when the size of graphs tends to infinity by interlacing inequalities aiming at exploring large real networks.
A monotonic relation between $d_1$ and the evolutionary distance of biological networks is observed in simulations.
%Furthermore, by the interlacing results of the spectra we study
%the convergence of graphs under the spectral distances. We also
%explore some applications in biology.

\smallskip
\noindent \textsc{Keywords.} Wasserstein distance, spectral measure, random rooted graph, asymptotic behavior, biological networks

\smallskip
\noindent \textsc{AMS subject classifications.} 05C50, 28A33, 05C63, 92B10
\end{abstract}
\maketitle

\section{Introduction}
One major interest in graph theory is to explore the differences of graphs in structure, that is, in the sense of graph isomorphism.
%However, this problem has been proven to be an NP problem, of which the most notable characteristic is that no fast solution to it is known.
In computational complexity theory, the subgraph isomorphism problem, like many combinational problems in graph theory, is NP hard. Therefore,
a method that gives a quick and easy estimate of the difference between two graphs is desirable \cite{macindoe2010graph}.
%even some certain level of approximation in the structural comparison of graphs should be accepted for the sake of tractability \cite{macindoe2010graph}.
%In computational complexity theory, the graph isomorphism problem belongs to NP (neither known to be P nor NP-complete) and the subgraph isomorphism problem belongs to NP-complete.
%Therefore, we have to accept certain level of approximation in the structural comparison of graphs for the sake of tractability \cite{macindoe2010graph}.
As we know, all the topological information of one graph can be found in its adjacency matrix. The spectral graph theory studies the relationship between the properties of graphs and the spectra of their representing matrices, such as adjacency matrices and Laplace matrices \cite{biggs1994algebraic, cvetkovic1980spectra, chung1997spectral}. In particular, some important topological information of a graph can be extracted from its specific eigenvalue like the first or the largest one, see e.g. \cite{cvetkovic1980spectra, chung1997spectral, Trevisan2012, BJ, JL11, BJL12, BHJ12}. The approach of reading information from the entire spectrum of a graph was explored in \cite{BanJ08, BanJLAA08, BanJ09, LOT13, Liu13} etc.
In spite of the existence of co-spectral graphs (see \cite[Chapter 3]{Thune12} for a general construction and the references therein), the spectra of graphs can support us one way on exploring problems that involve (sub-)graph isomorphism by the fast computation algorithms and the close relationship with the structure of graphs.

A spectral distance on the set of finite graphs of the same size, i.e. the same number of vertices, was suggested in a problem of Richard Brualdi in \cite{StevanovicProblems2007} to explore the so-called cospectrality of a graph. It was further studied in \cite{JSspectraldistance2012} using the spectra of adjacency matrices.
Employing certain Gaussian measures associated to the spectra of normalized Laplacians and the corresponding $L^1$ distances, the first named author, Jost, the third named author and Stadler \cite{GJS2014,Gu2014} explored a spectral distance well-defined on the set of all finite graphs without any constraint about sizes.
In this paper, instead of the Gaussian measures, we assign Dirac measures to graphs through the spectra of normalized Laplacians and use the Wasserstein distances between probability measures to propose spectral distances between graphs.
In fact, this notion of spectral distances provides a metrization of the notion of spectral classes of graphs introduced in \cite{GJS2014} via the weak convergence of the corresponding Dirac measures. The Spectral class can be considered as a weak notion of graph limits (see the concepts of graphon, graphing and related theories in the monograph of Lov\'{a}sz \cite{Lovasz}).
This notion of spectral distances is even adaptable for weighted infinite graphs. And we can prove diameter estimates with respect to these distances, which are sharp for certain cases.

A weighted graph $G$ is a triple $(V,E,\theta)$ where $V$ is
the set of vertices, $E$ is the set of edges and
$\theta:E\to(0,\infty)$, $(x,y)\mapsto\theta_{xy},$ is the (symmetric) edge
weight function. We write $x\sim y$ or $xy\in E$ if $\theta_{xy}>0$. We assume that for any vertex $x,$ the weighted degree defined by $\theta_x:=\sum_{y\sim x}\theta_{xy}$ is finite and $\theta_{xx}=0$ (i.e. there is no self-loops).

Let us first consider finite weighted graphs. The normalized Laplacian of $G=(V,E,\theta)$ is defined as, for any function $f: V\rightarrow\R$ and
any $x\in V$,
\begin{equation}\label{e:laplaceFinite}
\Delta_Gf(x)=f(x)-\frac{1}{\theta_x}\sum_{y\sim x}f(y)\theta_{xy}.
\end{equation}
This operator can be extended to an infinite weighted graph which has countable vertex set $V$ but is not necessarily locally finite (see \cite{KellerLenz12} or Section \ref{s:Pre} below). As a matrix, $\Delta_G$ is unitarily equivalent to the Laplace matrix studied in \cite{chung1997spectral}.

If $x\in V$ is an isolated vertex, i.e. $\theta_x=0$, (\ref{e:laplaceFinite}) reads as $\Delta_Gf(x)=f(x)$.
This implies that an isolated vertex contribute an eigenvalue $1$ to the spectrum of $\Delta_G$, denoted by $\sigma(G)$.
%We denote by $\ell^2(V,\theta)$ the space of $\ell^2$-integrable functions on $V$ with respect to the measure $\theta$. For details, we refer to Section \ref{s:Pre}.
%
%As a bounded linear operator on $\ell^2(V,\theta),$ the normalized Laplacian of a (possibly infinite) weighted graph $G=(V,E,\theta)$ is densely defined as, for any finitely supported function $f:V\to \R,$
%$$\Delta_Gf(x)=f(x)-\frac{1}{\theta_x}\sum_{y\sim
%x}f(y)\theta_{xy}.$$ It is well-known that its spectrum,
%denoted by $\sigma(G)$, is contained in $[0,2].$ We shall assign every finite weighted graph a probability measure (for infinite one, a set of probability measures) by its spectrum information.
%
%Let us first consider finite weighted graphs.
%%For a finite graph
%%with $K$ isolated vertices,
%%$G=G_1\cup\{x_i\}_{i=1}^K$,
%We take the convention that the spectrum of the graph of a single vertex is $\{1\}$.
%%$\sigma(G)=\sigma(G_1)\cup\{\underbrace{1,\cdots,1}_{K}\}.$
%Note that
%this is different from the standard one in the literature
%that the isolated vertices contribute to the spectrum zero eigenvalues.
In this way, by the absence of the self-loops, the
spectrum of any finite weighted graph
$\sigma(G)=\{\lambda_i\}_{i=1}^N,$ counting the multiplicity, satisfies the trace condition
\begin{equation}\label{e:trace condition(introduction)}\sum_{i=1}^N\lambda_i=N\end{equation} where $N=|V|.$ It is well-known that $\sigma(G)$ is contained in $[0,2]$. We associate
to $\sigma(G)$ a probability measure on $[0,2]$ as
follows:
\begin{equation}\label{e:spectralmeasure}\mu_{\sigma(G)}:=\frac1N\sum_i\delta_{\lambda_i},\end{equation} where $\delta_{\lambda_i}$ is the Dirac measure concentrated on $\lambda_i.$
We call $\mu_{\sigma(G)}$ the \emph{spectral measure} for a finite weighted
graph. (This is known as the empirical distribution of the eigenvalues in random matrix theory.) Denote by $P([0,2])$ the set of probability measures on the
interval $[0,2]$. For any $\mu\in P([0,2]),$ the first moment of $\mu$ is
defined as
$m_1(\mu):=\int_{[0,2]}\lambda\ d\mu(\lambda).$
The trace
condition \eqref{e:trace condition(introduction)} is then translated to
\begin{equation}
m_1(\mu_{\sigma(G)})=1.
\end{equation}
This is a key property of the spectral measures for our further investigations.

Let $d_p^W$ ($1\leq p<\infty$) be the $p$-th Wasserstein
distance on $P([0,2])$. That is, for any $\mu$, $\nu\in P([0,2])$ (see e.g. \cite{Villani09}),
$$d_{p}^W (\mu, \nu):=\left( \inf_{\pi \in \Pi (\mu,
\nu)} \int_{[0,2] \times [0,2]} d(x, y)^{p}d\pi (x, y) \right)^{1/p},$$
where $\Pi (\mu, \nu)$ denotes the collection of all measures on
$[0,2]\times [0,2]$ with marginals $\mu$ and $\nu$ on the first and second
factors respectively, i.e. $\pi\in \Pi(\mu,\nu)$ if and only if
$\pi(A\times [0,2])=\mu(A)$ and $\pi([0,2]\times B)=\nu(B)$ for all Borel
subsets $A,B\subseteq [0,2]$.

It is well-known that $(P([0,2]), d_p^W)$ is a complete metric space
for $p\in [1,\infty)$ which induce the weak topology of measures in $P([0,2])$(see e.g. \cite[Theorem~6.9]{Villani09}).

One can prove that $\diam
(P([0,2]),d_p^W)=2$. Indeed, on one hand, for any $\mu,\nu\in
P([0,2])$ by the optimal transport interpretation of Wasserstein
distance, $d_p^W(\mu,\nu)\leq 2.$ On the other hand,
$d_p^W(\delta_0,\delta_2)=2$. (Recall that $\delta_0, \delta_2$ are the Dirac measures concentrated on $0, 2$, respectively.)

\begin{definition}\label{d:spectal distance}Given two finite weighted graphs $G=(V,E,\theta)$ and
$G'=(V',E',\theta'),$ the \emph{spectral distance} between $G$
and $G'$ is defined as
\begin{equation}\label{e:defSpectDist}d_p(G, G'):=d_p^W(\mu_{\sigma(G)},\mu_{\sigma(G')}).\end{equation}\end{definition}

We denote by $\mathcal{FG}$ the space of all finite weighted graphs.
Then for
any $1\leq p<\infty,$ $(\mathcal{FG},d_p)$ is a pseudo-metric space.
This is not a metric space due to the existence of co-spectral graphs.
However, in applications this spectral consideration leads to the simplification of
measuring the discrepancy of graphs.

One of the main results of our paper is the following theorem.
\begin{thm}\label{t:diam estimate}
For any $1\leq p<\infty,$ we have
$$\diam(\FG,d_p)\leq 2^{1-\frac{1}{p}}.$$
\end{thm}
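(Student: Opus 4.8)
The plan is to peel the statement down to a clean inequality about probability measures and then solve that by duality. Every spectral measure $\mu_{\sigma(G)}$ lies in $P([0,2])$ and, by the trace condition, satisfies $m_1(\mu_{\sigma(G)})=1$; and by Definition \ref{d:spectal distance} the distance $d_p(G,G')$ is just $d_p^W(\mu_{\sigma(G)},\mu_{\sigma(G')})$. Hence it suffices to prove the single estimate: for all $\mu,\nu\in P([0,2])$ with $m_1(\mu)=m_1(\nu)=1$ one has $d_p^W(\mu,\nu)\le 2^{1-\frac1p}$. The diameter bound then follows by taking the supremum over all graphs.

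The second step is to reduce the general exponent $p$ to the case $p=1$ by an elementary interpolation. Let $\pi$ be an optimal coupling realizing $d_1^W(\mu,\nu)$. On $[0,2]\times[0,2]$ we have $|x-y|\le 2$, so $|x-y|^p\le 2^{p-1}|x-y|$, and since $\pi$ is an admissible (if suboptimal) coupling for the $p$-problem,
\[
d_p^W(\mu,\nu)^p\le\int_{[0,2]\times[0,2]}|x-y|^p\,d\pi\le 2^{p-1}\int_{[0,2]\times[0,2]}|x-y|\,d\pi=2^{p-1}\,d_1^W(\mu,\nu).
\]
Thus $d_p^W(\mu,\nu)\le 2^{\frac{p-1}{p}}\big(d_1^W(\mu,\nu)\big)^{1/p}$, and the whole theorem will follow once we establish $d_1^W(\mu,\nu)\le 1$.

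To prove $d_1^W(\mu,\nu)\le 1$ I would invoke the Kantorovich--Rubinstein duality $d_1^W(\mu,\nu)=\sup_{\mathrm{Lip}(f)\le 1}\big(\int f\,d\mu-\int f\,d\nu\big)$ and exploit the first-moment constraint. The key observation is that any affine function $\ell(t)=at+b$ satisfies $\int\ell\,d\mu=a\,m_1(\mu)+b=a\,m_1(\nu)+b=\int\ell\,d\nu$, because both measures are probability measures with the same mean $1$. Consequently, for a fixed $1$-Lipschitz $f$ and any affine $\ell$,
\[
\int f\,d\mu-\int f\,d\nu=\int(f-\ell)\,d\mu-\int(f-\ell)\,d\nu\le 2\,\|f-\ell\|_\infty .
\]
Minimizing over affine $\ell$ reduces the problem to a purely one-dimensional approximation estimate: the best uniform affine approximation error $E(f):=\inf_{\ell\ \mathrm{affine}}\|f-\ell\|_\infty$ of a $1$-Lipschitz $f$ on $[0,2]$.

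The crux, and the step I expect to be the main obstacle, is the sharp bound $E(f)\le \tfrac12$. I would prove it explicitly: set $a=\tfrac12\big(f(2)-f(0)\big)\in[-1,1]$ and $g(t)=f(t)-f(0)-at$, so that $g(0)=g(2)=0$ and $g'\in[-1-a,\,1-a]$ almost everywhere. Writing $p,q$ for points where $g$ attains its maximum and minimum and estimating $g(p)-g(q)$ in two complementary ways (once using the upper slope bound on $[0,p]\cup[q,2]$ together with $g(0)=g(2)=0$, once using the lower slope bound on the segment between $p$ and $q$) and then optimizing over $|q-p|$ gives $\mathrm{osc}(g)\le 1-a^2\le 1$; recentering $\ell$ by $\tfrac12(\max g+\min g)$ yields $E(f)=\tfrac12\,\mathrm{osc}(g)\le\tfrac12$. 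Feeding this into the previous paragraph gives $\int f\,d\mu-\int f\,d\nu\le 1$ for every $1$-Lipschitz $f$, hence $d_1^W(\mu,\nu)\le 1$, and the interpolation step concludes $d_p^W(\mu,\nu)\le 2^{1-\frac1p}$. The constant $\tfrac12$ is exactly where the interval length $2$ and the normalization $m_1=1$ conspire, and it is tight: it is attained by the tent potential $f(t)=1-|t-1|$, corresponding to the extremal pair $\delta_1$ versus $\tfrac12(\delta_0+\delta_2)$.
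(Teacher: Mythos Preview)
Your proof is correct and takes a genuinely different route from the paper's.

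Both arguments share the same outer shell: reduce to the measure-theoretic statement (any $\mu,\nu\in P([0,2])$ with $m_1(\mu)=m_1(\nu)=1$ satisfy $d_p^W(\mu,\nu)\le 2^{1-1/p}$), and handle $p>1$ by the trivial pointwise bound $|x-y|^p\le 2^{p-1}|x-y|$, so that everything rests on the $p=1$ case. From there the approaches diverge sharply. The paper passes to inverse cumulative distribution functions via the one-dimensional representation $d_1^W(\mu,\nu)=\int_0^1|F_\mu^{-1}-F_\nu^{-1}|$, observes that the set of nondecreasing maps $[0,1]\to[0,2]$ with integral $1$ is (after discretization) a compact convex polytope, and uses convexity of the $\ell^1$ norm to reduce to its extreme points, which are ``admissible $2$-step functions''; it then disposes of those by an exhaustive geometric case analysis involving reflections of regions (their Lemma~\ref{l:2-step estimate}). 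Your argument instead stays on the dual side: Kantorovich--Rubinstein, the observation that the constraints $m_1=1$ and total mass $1$ kill all affine test functions, and then the sharp Whitney-type bound $\inf_{\ell\ \mathrm{affine}}\|f-\ell\|_{L^\infty[0,2]}\le\tfrac12$ for $1$-Lipschitz $f$. Your approach is considerably shorter and more conceptual; it also makes the extremal pair $\delta_1$ versus $\tfrac12(\delta_0+\delta_2)$ (i.e.\ the graphs $\{\cdot\}$ and $P_2$) and the corresponding tent potential fall out naturally, whereas in the paper the equality case emerges only after tracing through all the subcases. The paper's route, on the other hand, yields along the way the explicit description of the extreme points of the spectral-vector polytope, which may have independent interest. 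One small remark: in your oscillation estimate you implicitly assume an ordering of the extremal points $p,q$; the case $q<p$ is handled symmetrically (swapping the roles of the slope bounds $1-a$ and $1+a$) and gives the same bound $1-a^2$, so this is only a cosmetic omission.
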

\begin{remark}
\

\begin{enumerate}[(a)]
\item Embedded as a subspace of $P([0,2]),$ $\FG$ is a proper subspace by considering the diameters.
\item One can prove an upper bound directly by using Chebyshev inequality, see
Theorem \ref{t:Chebyshev diam estimate}. Clearly, this theorem
improves that estimate.
\item This estimate is tight for $p=1,$ i.e. $\diam(\FG,d_1)=1,$ see Corollary \ref{c:sharpness}.
\item We don't claim the sharpness of upper bound estimates for $p\in(1,\infty).$
\end{enumerate}
\end{remark}

In fact, Theorem \ref{t:diam estimate} follows from the estimates on the
Wasserstein distance of probability measures in condition of the first moments.
\begin{thm}[Measure-theoretic version]\label{t:measure version}
For any $\mu,\nu\in P([0,2])$ with $m_1(\mu)=m_1(\nu)=1$ and $p\in
[1,\infty),$
\begin{equation}
d^W_p(\mu,\nu)\leq 2^{1-\frac{1}{p}}.
\end{equation}
\end{thm}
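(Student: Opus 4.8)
The plan is to reduce the general statement to the case $p=1$ and then settle $p=1$ by a one-dimensional, extreme-point argument.

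First I would reduce to $p=1$. Since $\mu,\nu$ are supported on $[0,2]$, every coupling $\pi\in\Pi(\mu,\nu)$ lives on $[0,2]\times[0,2]$, where $|x-y|\le 2$ and hence $|x-y|^p\le 2^{p-1}|x-y|$. Applying this to a $d_1^W$-optimal coupling $\pi^*$ gives
\[ d_p^W(\mu,\nu)^p\le\int_{[0,2]^2}|x-y|^p\,d\pi^*\le 2^{p-1}\int_{[0,2]^2}|x-y|\,d\pi^*=2^{p-1}\,d_1^W(\mu,\nu). \]
Thus it suffices to prove $d_1^W(\mu,\nu)\le 1$, and then $d_p^W(\mu,\nu)\le 2^{(p-1)/p}=2^{1-\frac1p}$. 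I note that this last step is wasteful for $p>1$, which is consistent with the sharpness being asserted only at $p=1$.

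For the bound $d_1^W(\mu,\nu)\le 1$ I would use the one-dimensional formula $d_1^W(\mu,\nu)=\int_0^2|F_\mu(t)-F_\nu(t)|\,dt$, where $F_\mu,F_\nu$ denote the distribution functions. The moment condition becomes $\int_0^2 F_\mu\,dt=\int_0^2 F_\nu\,dt=2-m_1=1$; in particular $\int_0^2(F_\mu-F_\nu)\,dt=0$. Writing $|F_\mu-F_\nu|=F_\mu+F_\nu-2\min(F_\mu,F_\nu)$ then turns the desired inequality into the equivalent claim $\int_0^2\min(F_\mu,F_\nu)\,dt\ge\frac12$.

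The hard part is this last inequality, and the key difficulty to respect is that it genuinely uses the monotonicity of $F_\mu$ and $F_\nu$: dropping monotonicity and keeping only $0\le F\le1$ with $\int_0^2 F=1$ makes it false (two disjoint indicator profiles give $\int|F_\mu-F_\nu|=2$), and correspondingly the naive estimates — bounding by $\|F_\mu-F_\nu\|_\infty$, routing transport through $\delta_1$ via the triangle inequality, or applying Cauchy--Schwarz — all lose exactly a factor of two. To capture monotonicity cleanly I would argue by extreme points: the set $\mathcal M=\{\rho\in P([0,2]):m_1(\rho)=1\}$ is convex and weak-$*$ compact, the map $(\mu,\nu)\mapsto d_1^W(\mu,\nu)$ is jointly convex and continuous, so its maximum on $\mathcal M\times\mathcal M$ is attained at a pair of extreme points of $\mathcal M$. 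Since $\mathcal M$ is carved from the simplex $P([0,2])$ by the single linear constraint $m_1=1$, its extreme points are supported on at most two atoms. The problem then collapses to the finite-dimensional optimization of $W_1$ between measures of the form $\lambda\delta_a+(1-\lambda)\delta_b$ with $\lambda a+(1-\lambda)b=1$, which a direct computation bounds by $1$; the maximum $1$ is realized by $\mu=\tfrac12\delta_0+\tfrac12\delta_2$ and $\nu=\delta_1$, the same pair that yields the sharpness in the corresponding corollary.
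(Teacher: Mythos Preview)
Your argument is correct and shares the paper's core idea: reduce to $p=1$ via $|x-y|^p\le 2^{p-1}|x-y|$, then use convexity to push the $p=1$ problem to extreme points, which are two-atom measures. The implementations differ in an interesting way. You work directly on the infinite-dimensional set $\mathcal M=\{\rho\in P([0,2]):m_1(\rho)=1\}$, invoking Bauer's maximum principle together with the (Dubins--type) fact that a single affine constraint on $P([0,2])$ forces extreme points to be supported on at most two atoms. The paper instead passes through an approximation by rational step functions, lands in a finite-dimensional polytope $P\subset\R^N$, and only there applies the elementary convexity fact (their Fact~3.2) that the maximum of a convex function is attained on $\ex(P)$, which they compute explicitly. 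Your route is cleaner and avoids the approximation step, at the cost of citing slightly heavier tools; the paper's route stays entirely elementary. One caution: the ``direct computation'' you defer for the two-atom case is exactly the content of the paper's Lemma~3.1, and there it is carried out by a genuine case analysis (several cases and subcases with geometric reflection arguments). It is finite-dimensional and doable, but it is not a one-line check; that is where the real work in either approach resides.
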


By Proposition \ref{p:cumulative functions} and Lemma \ref{l:cumulative dis} below, one easily
shows that the above measure-theoretic estimate is equivalent to the
following analytic estimate.
\begin{thm}[Analytic version]\label{t:analytic version}
Let $f,g:[0,1]\to[0,2]$ be two nondecreasing functions such that
$\int_0^1 f(x)dx=\int_0^1 g(x)dx=1.$ Then for any $p\in [1,\infty)$
\begin{equation}
\left(\int_0^1|f-g|^p(x)dx\right)^{\frac{1}{p}}\leq 2^{1-\frac{1}{p}}.
\end{equation}
\end{thm}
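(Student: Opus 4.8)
The plan is to deduce the bound from the two defining constraints—the monotonicity of $f,g$ and the normalization $\int_0^1 f=\int_0^1 g=1$—after first peeling off the exponent $p$. Since $f,g$ take values in $[0,2]$, we have $|f-g|\le 2$ pointwise, hence $|f-g|^p=|f-g|^{p-1}|f-g|\le 2^{p-1}|f-g|$. Integrating gives $\int_0^1|f-g|^p\,dx\le 2^{p-1}\int_0^1|f-g|\,dx$, so after taking $p$-th roots it suffices to prove the single inequality $\int_0^1|f-g|\,dx\le 1$; the exponent $2^{(p-1)/p}=2^{1-\frac1p}$ then appears automatically. This reduction is routine and loses nothing at $p=1$.

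For the remaining $L^1$ estimate I would rewrite the integrand through the elementary identity $|f-g|=f+g-2\min(f,g)$. Using $\int_0^1 f=\int_0^1 g=1$, this gives $\int_0^1|f-g|\,dx=2-2\int_0^1\min(f,g)\,dx$, so the target $\int_0^1|f-g|\le 1$ is equivalent to the lower bound $\int_0^1\min(f,g)\,dx\ge \tfrac12$. Two observations yield this. First, since $\max(f,g)\le 2$ and $fg=\min(f,g)\max(f,g)\ge 0$, we have the pointwise bound $\min(f,g)\ge \tfrac12 fg$. Second—and this is where monotonicity enters decisively—$f$ and $g$ are both nondecreasing, so Chebyshev's integral (correlation) inequality applies on $[0,1]$ and gives $\int_0^1 fg\,dx\ge\big(\int_0^1 f\big)\big(\int_0^1 g\big)=1$. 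Combining, $\int_0^1\min(f,g)\,dx\ge\tfrac12\int_0^1 fg\,dx\ge\tfrac12$, which is exactly what is needed.

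The main obstacle is isolating the correct use of monotonicity in the second step. The constraints $|f-g|\le 2$ and $\int_0^1(f-g)=0$ alone are \emph{not} enough: a function equal to $2$ on a set of measure $\tfrac12$ and to $-2$ on the complement satisfies both, yet has $L^p$ norm $2$, exceeding $2^{1-\frac1p}$ for $p>1$. Thus the argument must genuinely exploit that $f$ and $g$ are similarly ordered, which is precisely the hypothesis activating Chebyshev's correlation inequality; the bound $\min(f,g)\ge\tfrac12 fg$ is the device that converts that correlation statement into control of the overlap $\int_0^1\min(f,g)$. Finally I would record that the estimate is sharp at $p=1$: taking $g\equiv 1$ and $f=2\cdot\mathbf{1}_{[1/2,1]}$ (both admissible nondecreasing functions with integral $1$) turns every inequality above into an equality, so that $\int_0^1|f-g|\,dx=1$, in agreement with Corollary \ref{c:sharpness}.
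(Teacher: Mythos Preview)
Your proof is correct, and it is genuinely different from---and considerably shorter than---the paper's argument. Both proofs share the same final step: reducing general $p$ to $p=1$ via $|f-g|^p\le 2^{p-1}|f-g|$. For the core $L^1$ estimate, however, the paper proceeds structurally: it shows that the maximum of $\|\alpha-\beta\|_{\ell^1}$ over the polytope $\{0\le\alpha_1\le\cdots\le\alpha_N\le 2,\ \sum\alpha_i=N\}$ is attained at extremal points (Fact~\ref{f:fact on convex}), identifies these extremal points as ``admissible $2$-step functions,'' and then verifies the bound for pairs of such functions by an exhaustive geometric case analysis (Lemma~\ref{l:2-step estimate}), finally passing to general $f,g$ by rational approximation. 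Your route bypasses all of this with two pointwise inequalities: the identity $|f-g|=f+g-2\min(f,g)$ reduces the claim to $\int_0^1\min(f,g)\ge\tfrac12$, and the observation $\min(f,g)\ge\tfrac12 fg$ (from $\max(f,g)\le 2$) converts this into $\int_0^1 fg\ge 1$, which is exactly Chebyshev's correlation inequality for the two nondecreasing functions. It is worth noting that the paper does invoke Chebyshev earlier (Theorem~\ref{t:Chebyshev diam estimate}), but only to obtain the weaker bound $\sqrt{2}$ via $\int|f-g|^2$; your insight is that routing Chebyshev through $\min(f,g)$ rather than through $|f-g|^2$ already gives the sharp constant $1$. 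The paper's longer argument does buy one thing: its case analysis in Lemma~\ref{l:2-step estimate} pins down the equality case among $2$-step functions, though you recover the same extremal pair directly at the end.
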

Section \ref{s:maintheorem} is devoted to the proofs of Theorem \ref{t:diam estimate}, \ref{t:measure version} and Theorem
\ref{t:analytic version}.

We extend our approach of the spectral distance to infinite graphs (with countable vertex set V) in Section
\ref{s:spectral distances of infinite graphs}. Note that in the above arguments we only use
the normalization of the first moment of the spectral measures, i.e.
$m_1(\mu_{\sigma(G)})=1,$ our results generalize to all weighted
graphs including the infinite ones. For spectral measures with distinguished vertices on infinite
graphs, we refer to Mohar-Woess \cite{Mohar1989}. We introduce two
definitions of spectral measures for infinite graphs. One is defined
via the exhaustion of the infinite graphs by the spectral measures
of normalized Dirichlet Laplacians on subgraphs. The other is
defined for random rooted graphs following Benjamini-Schramm
\cite{Benjamini2001a}, Aldous-Lyons \cite{Aldous2007} and Ab\'ert-Thom-Vir\'ag \cite{ATV}.

We denote by $\G$ the collection of all (possibly infinite) weighted
graphs. For any $G\in \G,$ we define $\SM(G)$ as the spectral
measures of $G$ by exhaustion, see Definition \ref{d:infinite by
exhaustion}, which is a closed subset of $P([0,2])$. Then $\G$
endowed with the Hausdorff distance induced from the metric space
$(P([0,2]),d_p^W)$, denoted by $d_{p,H},$ is a pseudo-metric space.
A direct application of Theorem \ref{t:measure version} yields the
following corollary (recalled below as Theorem \ref{T:diameterInfi1}).
\begin{coro} For $p\in [1,\infty),$
$$\diam(\G,d_{p,H})\leq 2^{1-\frac{1}{p}}.$$
\end{coro}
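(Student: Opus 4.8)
The plan is to reduce the corollary to Theorem \ref{t:measure version} by unwinding the definition of the Hausdorff distance $d_{p,H}$. Recall that for two closed subsets $A,B$ of the metric space $(P([0,2]),d_p^W)$, the induced Hausdorff distance is
$$d_{p,H}(A,B)=\max\left\{\sup_{\mu\in A}\inf_{\nu\in B}d_p^W(\mu,\nu),\ \sup_{\nu\in B}\inf_{\mu\in A}d_p^W(\mu,\nu)\right\}.$$
Applying this with $A=\SM(G)$ and $B=\SM(G')$ for arbitrary $G,G'\in\G$, the whole estimate will follow once I control $d_p^W(\mu,\nu)$ uniformly over every pair $(\mu,\nu)\in\SM(G)\times\SM(G')$.

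First I would establish the key structural fact that every measure in $\SM(G)$ has first moment equal to $1$. By Definition \ref{d:infinite by exhaustion}, each $\mu\in\SM(G)$ arises as a weak limit of spectral measures of normalized Dirichlet Laplacians along an exhaustion of $G$ by finite subgraphs. Since $\lambda\mapsto\lambda$ is a bounded continuous function on the compact interval $[0,2]$, the first moment $m_1(\cdot)=\int_{[0,2]}\lambda\,d(\cdot)$ is continuous with respect to weak convergence; hence it suffices to check that the approximating Dirichlet spectral measures have first moments tending to $1$, which reflects the trace normalization carried over to the Dirichlet setting. I expect this to be the main obstacle: the Dirichlet Laplacian on a finite subgraph need not satisfy the exact identity $m_1=1$ enjoyed by the full normalized Laplacian, so one must verify that the boundary contributions become negligible in the limit along the exhaustion, yielding $m_1(\mu)=1$ for every limit measure $\mu$. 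This is precisely the place where the remark that our arguments ``only use the normalization of the first moment'' is invoked.

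Granting this, the remainder is immediate. For any $\mu\in\SM(G)$ and $\nu\in\SM(G')$ we now have $m_1(\mu)=m_1(\nu)=1$, so Theorem \ref{t:measure version} gives $d_p^W(\mu,\nu)\le 2^{1-\frac1p}$. Consequently $\inf_{\nu\in\SM(G')}d_p^W(\mu,\nu)\le 2^{1-\frac1p}$ for every $\mu\in\SM(G)$, whence $\sup_{\mu\in\SM(G)}\inf_{\nu\in\SM(G')}d_p^W(\mu,\nu)\le 2^{1-\frac1p}$, and by symmetry the same bound holds for the second term in the maximum. Therefore $d_{p,H}(\SM(G),\SM(G'))\le 2^{1-\frac1p}$. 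Since $G,G'\in\G$ were arbitrary, taking the supremum over all pairs yields $\diam(\G,d_{p,H})\le 2^{1-\frac1p}$, as claimed.
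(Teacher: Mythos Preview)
Your overall strategy matches the paper's: establish $m_1(\mu)=1$ for every $\mu\in\SM(G)$ via weak convergence, then apply Theorem \ref{t:measure version} and unwind the Hausdorff distance. The paper states this as ``a direct application of Theorem \ref{t:measure version}.''

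However, the obstacle you anticipate does not exist. You write that the Dirichlet Laplacian on a finite subgraph ``need not satisfy the exact identity $m_1=1$'' and that one must show boundary contributions become negligible in the limit. In fact, $m_1(\mu_{\Omega_n})=1$ holds \emph{exactly} for every finite $\Omega_n$, not merely in the limit. By the paper's definition,
\[
\Delta_{G_\Omega} f(x) = f(x) - \frac{1}{\theta_x}\sum_{y\in\Omega} \theta_{xy}f(y),
\]
where $\theta_x$ is the degree in the ambient graph. The diagonal entries of this operator are all equal to $1$ (there are no self-loops), so its trace is $|\Omega|$ and hence $\sum_i\lambda_i(\Omega)=|\Omega|$; the paper records this explicitly just before Definition \ref{d:infinite by exhaustion}. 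Thus $m_1(\mu_{\Omega_n})=1$ for every $n$, and the weak-convergence step is immediate with no boundary analysis needed. Once you correct this, your argument is identical to the paper's.
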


For any $D\geq 1,$ we denote by $\RRG_D$ the collection of random
rooted graphs of degree $D,$ see Section \ref{s:random rooted
graphs} for definitions. Any finite weighted graph $G$ gives rise to
a random graph by assigning the root of $G$ uniformly randomly.
There are many interesting class of random rooted graphs such as
unimodular and sofic ones, see \cite{ATV}. For each random rooted graph
$G\in \RRG_D,$ we associate it with an expected spectral measure,
denoted by $\mu_G.$ In this way, $\RRG_D$ endowed with $d_p^W$
Wasserstein distance for expected spectral measures ($d_p$ in short)
is a pseudo-metric space. By Theorem \ref{t:measure version}, one can
prove the following corollary (recalled below as Theorem \ref{T:diameterInfi2}).
\begin{coro}
For $p\in [1,\infty),$
$$\diam (\RRG_D,d_p)\leq 2^{1-\frac{1}{p}}.$$
\end{coro}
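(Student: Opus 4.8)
The plan is to reduce this corollary to the measure-theoretic estimate of Theorem \ref{t:measure version}, which already bounds $d_p^W(\mu,\nu)$ by $2^{1-\frac1p}$ for any two measures in $P([0,2])$ whose first moments both equal $1$. Since $d_p$ on $\RRG_D$ is by definition $d_p^W(\mu_G,\mu_{G'})$ for the expected spectral measures $\mu_G,\mu_{G'}$, it suffices to verify, for every $G\in\RRG_D$, that $\mu_G$ is a probability measure supported on $[0,2]$ and that $m_1(\mu_G)=1$. The support condition is immediate: the expected spectral measure is an average, over the randomness of the rooted graph, of spectral measures of normalized Laplacians, each of which lives on $[0,2]$ since $\sigma(\Delta_G)\subseteq[0,2]$.

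The crux is the first-moment normalization, and I would obtain it by first proving it for a fixed rooted graph and then averaging. For a fixed rooted graph $(G,o)$, the spectral measure $\mu_{(G,o)}$ at the root represents $\phi\mapsto\frac{1}{\theta_o}\langle\phi(\Delta_G)\mathbf{1}_o,\mathbf{1}_o\rangle$ through the spectral theorem, where the inner product is taken in the weighted space $\ell^2(V,\theta)$ on which $\Delta_G$ is bounded and self-adjoint, and $\|\mathbf{1}_o\|^2=\theta_o$ makes $\mu_{(G,o)}$ a probability measure. Its first moment is then
\[
m_1(\mu_{(G,o)})=\frac{1}{\theta_o}\langle\Delta_G\mathbf{1}_o,\mathbf{1}_o\rangle=\frac{1}{\theta_o}\,\theta_o\,\Delta_G\mathbf{1}_o(o).
\]
A direct computation from \eqref{e:laplaceFinite}, using that $\mathbf{1}_o$ vanishes at every neighbor of $o$, gives $\Delta_G\mathbf{1}_o(o)=1$; this is precisely the statement that the diagonal entries of the normalized Laplacian equal $1$ in the absence of self-loops. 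Hence $m_1(\mu_{(G,o)})=1$ for every rooted graph.

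To pass to $\mu_G=\mathbb{E}\,\mu_{(G,o)}$ I would invoke linearity: because $\lambda$ is bounded by $2$ on the common support, Fubini--Tonelli permits interchanging the expectation over the random root with the integration in $\lambda$, so
\[
m_1(\mu_G)=\int_{[0,2]}\lambda\,d\mu_G(\lambda)=\mathbb{E}\!\left[\int_{[0,2]}\lambda\,d\mu_{(G,o)}(\lambda)\right]=\mathbb{E}\,[1]=1.
\]
Thus $\mu_G,\mu_{G'}\in P([0,2])$ with both first moments equal to $1$, and Theorem \ref{t:measure version} gives $d_p(G,G')=d_p^W(\mu_G,\mu_{G'})\le 2^{1-\frac1p}$ for all $G,G'\in\RRG_D$. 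Taking the supremum over all such pairs yields the claimed diameter bound.

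The only genuinely delicate point I anticipate is making the identification of $\mu_{(G,o)}$ and $\mu_G$ fully rigorous in the infinite, not-necessarily-locally-finite setting: one must ensure that $\Delta_G$ is a bounded self-adjoint operator on $\ell^2(V,\theta)$ with spectrum in $[0,2]$, so that the spectral measure at the root is well defined, and that the measurability and integrability needed to average over $\RRG_D$ hold. Given the operator-theoretic framework assembled in the earlier sections and the unimodular/sofic structure of $\RRG_D$, these are bookkeeping matters rather than substantive obstacles; the entire estimate rests on the single algebraic fact that the root always contributes a diagonal Laplacian entry equal to $1$.
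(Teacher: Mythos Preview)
Your proposal is correct and follows exactly the route the paper takes: the paper derives $m_1(\mu_{G,o})=1$ from the functional calculus (the diagonal entry of $\Delta_G$ at $o$ equals $1$), defines $\mu_G=E(\mu_{G,o})$, and then invokes Theorem~\ref{t:measure version}. Your write-up in fact supplies the details (the computation of $\langle\Delta_G\mathbf{1}_o,\mathbf{1}_o\rangle$ and the Fubini step for the expectation) that the paper only sketches.
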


In
fact, there are examples of finite graphs which saturate the upper bounds for $p=1$, see Example \ref{example:distance equals one} and \ref{example:distance approach one}.
\begin{coro}\label{c:sharpness}All upper bounds on $d_1$ are tight, i.e.
$$\diam(\FG,d_1)=\diam(\G,d_{1,H})=\diam (\RRG_D,d_1)=1.$$
\end{coro}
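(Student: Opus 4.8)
The plan is to read each of the three equalities as ``upper bound $\le 1$ together with a matching lower bound $\ge 1$''. Every upper bound is already available: putting $p=1$ into Theorem \ref{t:diam estimate} gives $\diam(\FG,d_1)\le 2^{1-1}=1$, and the two corollaries stated just above give $\diam(\G,d_{1,H})\le 1$ and $\diam(\RRG_D,d_1)\le 1$. Hence the whole task reduces to producing a pair of graphs whose $d_1$-distance equals (or approaches) $1$, and then checking that such a pair persists under the natural embeddings $\FG\hookrightarrow\G$ and $\FG\hookrightarrow\RRG_D$.

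For the lower bound on $\diam(\FG,d_1)$ I would exhibit two graphs with extremal spectral measures. Take the edgeless graph $E_n$: since every vertex is isolated, each contributes the eigenvalue $1$, so $\mu_{\sigma(E_n)}=\delta_1$. Take a perfect matching $M_n$, a disjoint union of edges; each edge $K_2$ has normalized Laplacian spectrum $\{0,2\}$, so $\mu_{\sigma(M_n)}=\tfrac12\delta_0+\tfrac12\delta_2$ (already the single edge versus the single vertex gives the minimal such pair). A one-line transport argument, moving mass $\tfrac12$ from $1$ to $0$ and mass $\tfrac12$ from $1$ to $2$ at unit cost each, or equivalently the cumulative-distribution formula $d_1^W(\mu,\nu)=\int_0^2|F_\mu(t)-F_\nu(t)|\,dt$, yields $d_1(E_n,M_n)=d_1^W\!\left(\delta_1,\tfrac12\delta_0+\tfrac12\delta_2\right)=1$. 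This is Example \ref{example:distance equals one}, and with the upper bound it gives $\diam(\FG,d_1)=1$; Example \ref{example:distance approach one}, using connected graphs such as $K_n$ whose spectral measure tends to $\delta_1$, shows the bound is also approached within more restrictive classes.

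To transfer this lower bound I would verify that both later constructions restrict isometrically to $\FG$ on these examples. For the exhaustion spectral measure, a finite graph is exhausted by itself, so $\SM(G)=\{\mu_{\sigma(G)}\}$ is a singleton and the Hausdorff distance $d_{1,H}$ between singletons is just $d_1^W$; hence $d_{1,H}(E_n,M_n)=1$ and $\diam(\G,d_{1,H})\ge 1$. For random rooted graphs, choosing the root uniformly at random on a finite graph $G$ and averaging the rooted spectral measures recovers the normalized trace $\tfrac1N\,\mathrm{tr}$ of the spectral resolution of $\Delta_G$, i.e. the empirical spectral distribution $\mu_{\sigma(G)}$; since $E_n$ and $M_n$ have maximum degree $\le 1\le D$, they lie in $\RRG_D$ and still satisfy $d_1(E_n,M_n)=1$, so $\diam(\RRG_D,d_1)\ge 1$. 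Combining each lower bound with its matching upper bound forces all three diameters to equal $1$.

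The main obstacle I anticipate is not the distance computation, which is elementary, but confirming the two identities that make the embeddings isometric: that $\SM(G)$ genuinely collapses to the single measure $\mu_{\sigma(G)}$ for finite $G$ under Definition \ref{d:infinite by exhaustion} (one must check the Dirichlet exhaustion introduces no spurious limit measures), and that the expected spectral measure of a uniformly rooted finite graph coincides with its empirical spectral distribution. Both rest on the precise definitions in Sections \ref{s:spectral distances of infinite graphs} and \ref{s:random rooted graphs}; once they are in place the transfer of the lower bound is immediate.
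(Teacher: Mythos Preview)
Your proposal is correct and follows essentially the same route as the paper: the upper bounds are quoted from Theorem~\ref{t:diam estimate} and the two corollaries, and the lower bounds come from the pair in Example~\ref{example:distance equals one} (single vertex versus $P_2$), transferred to $\G$ and $\RRG_D$ via the natural embeddings. The paper is terser, simply citing the examples, while you spell out why the embeddings preserve the distance; for $\RRG_D$ the identity you need is in fact stated explicitly in Section~\ref{s:random rooted graphs} (the expected spectral measure of a uniformly rooted finite graph equals $\tfrac1N\sum_i\delta_{\lambda_i}$), and for $\G$ the observation that a finite connected $G$ is eventually exhausted by itself handles the singleton claim---so your anticipated obstacles dissolve on inspection, and it is safest to phrase the argument with the connected pair $\{\cdot\}$ and $P_2$ rather than the disconnected $E_n$, $M_n$.
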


We then concentrate on the spectral distance $d_1$. In Section \ref{s:examples}, we calculate $d_1$ on several particular classes of graphs.
For our purpose of application to large real networks, we are more concerned with the behavior of $d_1$ when the size of graphs $N$ tends to infinity. We observe convergence behaviors like $\mathcal{O}(1/N^2)$, $\mathcal{O}(1/N)$ in those examples.

The asymptotic behavior of $d_1$ is studied in general in Section \ref{s:Interlacing} by employing interlacing inequalities of the spectra of finite weighted graphs. For two graphs $G$ and $G'$, which differ from each other by some standard operations including e.g. edge deleting, vertex replication, vertices contraction and edge contraction, we prove
\begin{equation}
 d_1(G, G')\leq \frac{C}{N},
\end{equation}
where $C$ depends only on the operations and is independent of the size $N$ of $G$ (see Theorem \ref{t:Asymptotic behavior}).
By this result, we further derive a convergence result of graphs under the $d_1$ distance.

In the last section, we apply the distance $d_1$ to study the evolutionary process of biological networks by simulations. We start from a Barab\'{a}si-Albert scale-free network, which has proven to be a very common type of real large networks \cite{BA1999Science}. We then simulate the evolutionary process by the operations, edge-rewiring and duplication-divergence respectively. We observe a monotonic relation between $d_1$ and the evolutionary distance, which is a crucial point to anticipate further applications in exploring evolutionary history of biological networks.

\section{Preliminaries, spectral measures and spectral distances}\label{s:Pre}
In this section, we recall basics about graph spectra and Wasserstein distances
on the space of probability measures, and define the spectral
distances of finite graphs. The spectral distances of infinite
graphs and random graphs will be postponed to Section
\ref{s:spectral distances of infinite graphs}.

Let us consider a possibly infinite weighted graph $G=(V, E, \theta)$, where $V$ is a countable (possible infinite) set. We require that the weight function $\theta$ satisfies
\begin{align*}
    \sum_{y\in V}\theta_{xy}<\infty,\qquad \forall x\in V.
\end{align*} The weighted degree of the
vertex $x\in V$ is still defined as $\theta_x:=\sum_{y\sim x}\theta_{xy}$. The graph is called connected if for every two vertices $x,y\in V$ there exists
a finite path $x=x_{0}\sim x_{1}\sim\cdots\sim x_{n}=y$ connecting $x$ and $y$.

We define the \emph{(formal)
normalized Laplacian} $\Delta$ on the \emph{formal domain}
\begin{align*}
    F(V):=\{f:V\to\R\mid \sum_{y\in V}\theta_{xy}|f(y)|<\infty\mbox{ for all }x\in V\},
\end{align*}
by
\begin{align*}
    \Delta f(x)=\frac{1}{\theta_x}\sum_{y\in V}\theta_{xy}(f(x)-f(y)).
\end{align*}

As a linear operator, its restriction to the Hilbert space $\ell^{2}(V,\theta):=\{f:V\to\R| \sum_{x\in
V}|f(x)|^2\theta_x<\infty\},$ denoted by $\Delta_G$, coincides with the generator of the Dirichlet form
$$Q(f)=\frac{1}{2}\sum_{x,y\in V}\theta_{xy}|f(x)-f(y)|^{2},$$ defined on $\ell^2(V,\theta),$ for details
see \cite{KellerLenz12}.

If $G=(V,E,\theta)$ is a weighted graph without
isolated vertices, i.e. $\theta_x>0$ for all $x\in V$, then the
normalized Laplacian of $G$ can be rephrased as
$$\Delta_G:=I-D^{-1}A,$$ where $D$ is the degree operator and $A$ is the adjacency operator (defined as $D\tau_x=\theta_x\tau_x$ and $A\tau_x=\sum_{y\sim x}\theta_{yx}\tau_y$, where $\tau_x(y)=1$ if $y=x$ and $0$ otherwise), i.e. for any finitely supported function
$f: V\to \R,$ $$\Delta_Gf(x)=f(x)-\frac{1}{\theta_x}\sum_{y\sim
x}f(y)\theta_{xy}.$$ Since $D^{-1}A$ is a bounded selfadjoint
operator with operator norm less than or equal to $1$ on
$\ell^2(V,\theta)$, the spectrum of $\Delta_G,$ denoted by $\sigma(G)$,
is contained in the interval $[0,2].$

%In this section, we consider the spectral properties of normalized
%Laplacians of finite graphs. For finite weighted graphs without
%isolated vertices, the normalized Laplacians are defined as above.
%For finite weighted graphs with isolated vertices, we take the following convention.
%Assume $G=G_1\cup\{x_i\}_{i=1}^K$ is a finite graph with $K$ isolated
%vertices, then we set
%$$\sigma(G)=\sigma(G_1)\cup\{\underbrace{1,\cdots,1}_{K}\}.$$ As we have commented in the introduction,
%this convention is different from the standard one adopted in the literature. However, the reason of
%taking this convention will be clear soon.
We order the
spectrum of any finite weighted graph $G$ in the
nondecreasing way:
$$0\leq\lambda_1\leq \cdots\leq \lambda_N\leq 2,$$ where $N=|V|.$
For convenience, we also denote the spectrum of $G$ by a vector,
called spectral vector of $G$,
$\lambda_G:=(\lambda_i)_{i=1}^N=(\lambda_1,\lambda_2,\cdots,\lambda_N)\in
[0,2]^N.$

\subsection{Spectral measures}
Let $G$ be a finite weighted graph. We denote by $F_G$ the cumulative distribution function associated to $\mu_{\sigma(G)}$ (recall (\ref{e:spectralmeasure})), and by
$$F^{-1}_{G}(x):=\inf\{t\in \R: F_{G}(t)>x\}$$
the inverse cumulative distribution function.
Since $\sigma(G)\subseteq [0,2],$ we have $F_{G}:[0,2]\to[0,1]$ and $F^{-1}_{G}:[0,1]\to[0,2]$.
Recalling the trace condition (\ref{e:trace condition(introduction)}), we have the following
proposition.
\begin{pro}\label{p:cumulative functions}
Let $G=(V,E,\theta)$ be a finite weighted graph. Then the following
are true:
\begin{enumerate}[(a)]
\item $F_{G}$ and $F^{-1}_{G}$ are nonnegative nondecreasing step functions;
\item $\int_0^2 F_{G}(x) dx=1$;
\item
%\label{e:normalization}
$\int_{0}^1F^{-1}_{G}(x)dx=1$.
\end{enumerate}
\end{pro}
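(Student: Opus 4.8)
The plan is to treat the three claims in turn, observing that all of them ultimately reduce to the trace condition $\frac1N\sum_{i=1}^N\lambda_i=1$, equivalently $m_1(\mu_{\sigma(G)})=1$. Throughout I write the eigenvalues in nondecreasing order $0\le\lambda_1\le\cdots\le\lambda_N\le 2$.

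For part (a), I would simply unwind the definitions. The cumulative distribution function is $F_G(t)=\mu_{\sigma(G)}([0,t])=\frac1N\#\{i:\lambda_i\le t\}$, which is visibly a nonnegative nondecreasing step function, with jumps of height a multiple of $1/N$ at the distinct eigenvalues; it takes values in $[0,1]$, with $F_G(t)=0$ for $t<0$ and $F_G(t)=1$ for $t\ge\lambda_N$. The generalized inverse $F_G^{-1}(x)=\inf\{t:F_G(t)>x\}$ is the associated quantile function, and a direct check shows it is constant on each subinterval of $[0,1]$ between consecutive values attained by $F_G$, hence also a nonnegative nondecreasing step function $[0,1]\to[0,2]$. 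This part is purely formal.

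For part (b), I would invoke the layer-cake (tail) representation of the first moment. Since $\mu_{\sigma(G)}$ is supported in $[0,2]$, Fubini's theorem gives
\[
m_1(\mu_{\sigma(G)})=\int_{[0,2]}\lambda\,d\mu_{\sigma(G)}(\lambda)=\int_0^2\bigl(1-F_G(t)\bigr)\,dt=2-\int_0^2 F_G(t)\,dt,
\]
and combining this with the trace condition $m_1(\mu_{\sigma(G)})=1$ immediately yields $\int_0^2 F_G(t)\,dt=1$. For part (c), the cleanest route is a direct computation from the explicit quantile function found in (a): matching the strict inequality in the definition of $F_G^{-1}$ to the jump structure of $F_G$, one verifies that $F_G^{-1}(x)=\lambda_i$ for $x\in\bigl(\frac{i-1}{N},\frac{i}{N}\bigr)$, so that
\[
\int_0^1 F_G^{-1}(x)\,dx=\sum_{i=1}^N\lambda_i\cdot\frac1N=\frac1N\sum_{i=1}^N\lambda_i=1,
\]
again by the trace condition. (Alternatively, one may note that $F_G^{-1}(U)$ has law $\mu_{\sigma(G)}$ for $U$ uniform on $[0,1]$, so the integral is just $m_1(\mu_{\sigma(G)})$.)

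There is no serious obstacle here; the only point requiring care is the bookkeeping around the generalized inverse in parts (a) and (c). Specifically, one must handle repeated eigenvalues so that each value $\lambda_i$ is assigned to a subinterval of total length equal to its multiplicity divided by $N$, and one must match the strict ``$>$'' convention in the definition of $F_G^{-1}$ to the correct (half-open) endpoints of these intervals. Once the quantile function is pinned down, (c) is immediate, and (b) follows at once from the standard tail formula.
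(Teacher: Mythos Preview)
Your proof is correct and follows essentially the same line as the paper's. The only cosmetic difference is in the ordering: the paper first deduces (c) from the trace condition and then obtains (b) from (c) via the geometric observation that the graph of $F_G^{-1}$ splits the rectangle $[0,1]\times[0,2]$ of area $2$ into complementary pieces, whereas you derive (b) and (c) independently from the trace condition (your layer-cake identity $m_1=\int_0^2(1-F_G)$ is exactly the analytic form of that rectangle observation).
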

\begin{proof}
$(a)$ is trivial. $(c)$ follows from the trace condition
\eqref{e:trace condition(introduction)}. $(b)$ is equivalent to $(c)$ since the
total area of the rectangle $[0,1]\times [0,2]$ is 2.
\end{proof}

\subsection{Spectral distances}
%Let $(X, d)$ be a Polish space (i.e. a complete, separable metric
%space). For $p\in [1,\infty)$, let $P_p(X)$ denote the collection of
%all probability measures $\mu$ on M with finite $p$-th moment: for
%some (hence all) $x_0\in X$, $\int_{X} d(x, x_{0})^{p}d\mu (x) <
%\infty$.
%\begin{definition}The $p$-th Wasserstein distance
%between two probability measures $\mu$ and $\nu$ in $P_p(X)$ is
%defined as
%$$d_{p}^W (\mu, \nu):=\left( \inf_{\pi \in \Pi (\mu,
%\nu)} \int_{X \times X} d(x, y)^{p}d\pi (x, y) \right)^{1/p},$$
%where $\Pi (\mu, \nu)$ denotes the collection of all measures on
%$X\times X$ with marginals $\mu$ and $\nu$ on the first and second
%factors respectively, i.e. $\pi\in \Pi(\mu,\nu)$ if and only if
%$\pi(A\times X)=\mu(A)$ and $\pi(X\times B)=\nu(B)$ for all Borel
%subsets $A,B\subseteq X$.\end{definition}
%
%It is well-known that $(P_p(X),d_p^W)$ are complete metric spaces
%for $p\in [1,\infty)$ which induce the weak topology of measures (see e.g. \cite[Theorem~6.9]{Villani09}).
%
%
%
%
%
%
%\begin{definition}\label{d:spectal distance}Given two finite weighted graphs $G=(V,E,\theta)$ and
%$G'=(V',E',\theta'),$ the \emph{spectral distance} between $G$
%and $G'$ is defined as
%$$d_p(G, G'):=d_p^W(\mu_{\sigma(G)},\mu_{\sigma(G')}).$$\end{definition}
%
%Let us denote by $\mathcal{FG}$ the space of all finite weighted
%graphs. As we have mentioned, for any $1\leq p<\infty,$ $(\mathcal{FG},d_p)$ is a
%pseudo-metric space because of the existence of cospectral
%graphs.

Since the spectrum of the normalized Laplacian of a graph lies in
the interval $[0,2]\subset \R$, one may calculate the spectral
distance (\ref{e:defSpectDist}) explicitly. This is an advantage of
probability measures supported in the $1$-dimensional space. In fact, the spectral
distance between two finite weighted graphs $G$, $G'$, i.e. the Wasserstein distance of two
spectral measures $\mu_{\sigma(G)}, \mu_{\sigma(G')}$, can be calculated by the inverse cumulative distribution functions
$F^{-1}_{G}$ and $F^{-1}_{G'}$ thanks to the following lemma.

\begin{lemma}[see Theorem 8.1 in \cite{Major1978}]\label{l:cumulative dis} Let $\mu,\nu\in P([0,2])$ and $F_{\mu}^{-1},F_{\nu}^{-1}$ be their inverse cumulative distribution functions. Then for any $p\in [1,\infty),$
\begin{equation*}\label{e:inverse cumulative}d_p^W(\mu, \nu)=\left(\int_0^1 |F^{-1}_{\mu}(x)-F^{-1}_{\nu}(x)|^pdx\right)^{1/p}.\end{equation*}
\end{lemma}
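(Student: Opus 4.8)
The plan is to exhibit an explicit coupling — the \emph{monotone} (quantile) coupling — that achieves the claimed value, and then to show that no other coupling can do better; the lower bound is where the real work lies.

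First I would record the standard fact that the generalized inverse transports Lebesgue measure to the given measure: writing $\mathcal{L}$ for Lebesgue measure restricted to $[0,1]$, one has $(F_\mu^{-1})_*\mathcal{L} = \mu$ and likewise for $\nu$. This follows from the definition $F_\mu^{-1}(x) = \inf\{t : F_\mu(t) > x\}$ by checking that $\mathcal{L}(\{x : F_\mu^{-1}(x) \le s\}) = F_\mu(s)$ for every $s$. Consequently the map $T(x) := (F_\mu^{-1}(x), F_\nu^{-1}(x))$ pushes $\mathcal{L}$ forward to a measure $\pi^* := T_*\mathcal{L}$ on $[0,2]\times[0,2]$ whose marginals are exactly $\mu$ and $\nu$, so $\pi^* \in \Pi(\mu,\nu)$. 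By the change-of-variables formula its transport cost is
\[
\int_{[0,2]\times[0,2]} |x-y|^p \, d\pi^*(x,y) = \int_0^1 |F_\mu^{-1}(x) - F_\nu^{-1}(x)|^p \, dx,
\]
which already yields the upper bound $(d_p^W(\mu,\nu))^p \le \int_0^1 |F_\mu^{-1} - F_\nu^{-1}|^p\,dx$.

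For the matching lower bound I would exploit that the cost $c(x,y) = |x-y|^p$ is \emph{submodular} (satisfies the Monge condition): for $x_1 \le x_2$ and $y_1 \le y_2$,
\[
|x_1 - y_1|^p + |x_2 - y_2|^p \le |x_1 - y_2|^p + |x_2 - y_1|^p,
\]
which for $p > 1$ follows from the mixed derivative $\partial_x \partial_y c = -p(p-1)|x-y|^{p-2} \le 0$ (the case $p = 1$ being checked directly). The support of $\pi^*$ is a nondecreasing set in $[0,2]\times[0,2]$, and submodularity of $c$ converts the monotonicity of this set into $c$-cyclical monotonicity. Since $[0,2]$ is compact and $c$ is continuous, $c$-cyclical monotonicity of the support is sufficient for optimality of $\pi^*$; hence every $\pi \in \Pi(\mu,\nu)$ satisfies $\int |x-y|^p\, d\pi \ge \int |x-y|^p\, d\pi^*$, and combined with the upper bound this proves the identity after taking $p$-th roots.

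The main obstacle is the lower bound, i.e. the passage from submodularity to optimality. I expect to handle it in one of two equivalent ways: either invoke the general principle that a transport plan whose support is $c$-cyclically monotone is optimal (valid for a continuous cost on a compact space), or give a self-contained swapping argument showing that any plan can be rearranged toward the monotone one without increasing the cost — the submodular inequality above is precisely what guarantees that each elementary swap is nonincreasing. The remaining ingredients (the pushforward identity and the change of variables) are routine.
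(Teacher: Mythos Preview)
Your proposal is correct and follows the standard route to this classical identity. Note, however, that the paper does not supply its own proof of this lemma: it is stated with a direct citation to Theorem~8.1 of Major~\cite{Major1978} and used as a black box thereafter. So there is no in-paper argument to compare against; you have simply filled in what the authors took for granted.

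One small technical point worth tightening: for $1 < p < 2$ the cost $c(x,y) = |x-y|^p$ is not twice differentiable on the diagonal, so the mixed-partial computation $\partial_x\partial_y c = -p(p-1)|x-y|^{p-2}$ is only valid off $\{x=y\}$. The submodular inequality $|x_1-y_1|^p + |x_2-y_2|^p \le |x_1-y_2|^p + |x_2-y_1|^p$ still holds for all $p \ge 1$, but you should justify it without invoking a derivative that blows up --- e.g.\ by writing both sides as integrals of the convex function $t \mapsto |t|^p$ and comparing, or by a direct case analysis on the ordering of the four points. Once that is patched, the rest of your outline (quantile coupling for the upper bound, submodularity $\Rightarrow$ $c$-cyclical monotonicity of the monotone support $\Rightarrow$ optimality on a compact space with continuous cost) goes through cleanly.
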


One can show that if two graphs having the same number of vertices,
say $N$, then the spectral distance between them is reduced to the
$\ell^p$ distance between the spectral vectors, i.e. for any $1\leq
p<\infty,$
$$d_p(G, G')=\frac1N\|\lambda_{G}-\lambda_{G'}\|_{\ell^p}.$$

In this paper, we are interested in the diameter of the
pseudo-metric space $(\mathcal{FG},d_p)$ for $p\in [1,\infty)$. Recall that we naturally have
$$\diam (\FG,d_p)\leq \diam(P([0,2]), d_p^W)=
2.$$
%This upper bound is less likely to be tight for $p>1$.

We denote by $\{\cdot\}$ a graph consisting of a single vertex
without any edge. Then by our convention, $\sigma(\{\cdot\})=\{1\}.$
Clearly, for any weighted graph $G,$
$$d_p(G,\{\cdot\})\leq 1,\ \ \text{ for } 1\leq p<\infty.$$

In the following, we use (integral) Chebyshev inequality to derive a refined
upper bound for the diameter.
\begin{lemma}[Chebyshev inequality, see {\cite[Section 2.17]{HLP1934}} or \cite{FJ1984}]\label{l:Chebyshev}
For any nonnegative, monotonically increasing integrable functions
$f,g:[0,1]\to [0,\infty),$ we have
\begin{equation}\label{e:Chebyshev}
\int_{0}^1 f(x)g(x)dx\geq \int_0^1f(x)dx\int_0^1g(x)dx.
\end{equation}
\end{lemma}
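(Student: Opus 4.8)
The plan is to use the classical symmetrization trick for correlation inequalities. The key observation is that since both $f$ and $g$ are monotonically increasing, they are positively correlated: for any $x,y\in[0,1]$ the product $(f(x)-f(y))(g(x)-g(y))$ is nonnegative, because the two factors always share the same sign. Indeed, if $x\geq y$ then monotonicity gives $f(x)\geq f(y)$ and $g(x)\geq g(y)$, so both factors are $\geq 0$; if $x\leq y$ both are $\leq 0$; either way the product is $\geq 0$. Therefore the symmetric double integral
\begin{equation*}
I:=\int_0^1\int_0^1 (f(x)-f(y))(g(x)-g(y))\,dx\,dy
\end{equation*}
is nonnegative.

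Next I would expand the integrand and integrate term by term. Writing out the product gives $f(x)g(x)-f(x)g(y)-f(y)g(x)+f(y)g(y)$, and integrating each of the four pieces over the unit square, the two ``diagonal'' terms collapse to $\int_0^1 fg$ after the trivial integration in the free variable, while the two ``cross'' terms factor as $\int_0^1 f\,\int_0^1 g$ by Fubini. This yields
\begin{equation*}
I=2\int_0^1 f(x)g(x)\,dx-2\int_0^1 f(x)\,dx\int_0^1 g(x)\,dx,
\end{equation*}
and combining with $I\geq 0$ gives exactly \eqref{e:Chebyshev}.

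The only point requiring care is the justification of Fubini's theorem and the finiteness of the various integrals. A monotone integrable function need not have an integrable square, so $\int_0^1 fg$ could a priori be infinite; however, in that case the right-hand side of \eqref{e:Chebyshev} is finite (both $f$ and $g$ are integrable) and the inequality holds trivially, so one may assume $\int_0^1 fg<\infty$. Then each of the four nonnegative pieces in the expansion has finite integral, so Tonelli's theorem legitimizes the iterated integrals and linearity justifies recombining them. I expect this bookkeeping to be the only real obstacle; in the situation where the lemma is actually applied the functions take values in the bounded interval $[0,2]$, so all integrals are manifestly finite and the argument is immediate.
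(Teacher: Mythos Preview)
Your proof is correct; the symmetrization identity
\[
\int_0^1\!\!\int_0^1 (f(x)-f(y))(g(x)-g(y))\,dx\,dy = 2\int_0^1 fg - 2\Big(\int_0^1 f\Big)\Big(\int_0^1 g\Big)
\]
together with the pointwise nonnegativity of the integrand is exactly the classical argument, and your handling of the case $\int_0^1 fg=\infty$ is the right way to dispose of the integrability issue. Note, however, that the paper does not supply its own proof of this lemma: it is quoted as a known inequality with references to Hardy--Littlewood--P\'olya \cite[Section~2.17]{HLP1934} and Fink--Jodeit \cite{FJ1984}, and the proof you have written is essentially the one found in those sources.
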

\begin{thm}\label{t:Chebyshev diam estimate}
For any $1\leq p\leq 2,$ we have
$$\diam(\FG,d_p)\leq \sqrt2,$$ i.e. for any finite weighted graphs
$G$ and $G',$
$$d_p(G, G')\leq \sqrt2.$$
\end{thm}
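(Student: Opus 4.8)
The plan is to reduce the whole range $1\le p\le 2$ to the single case $p=2$, and then to settle that case by a direct expansion combined with the Chebyshev inequality of Lemma \ref{l:Chebyshev}. For the reduction, recall that by Lemma \ref{l:cumulative dis} we have $d_p(G,G')=\|F^{-1}_{G}-F^{-1}_{G'}\|_{L^p([0,1])}$, where the underlying measure is Lebesgue measure on $[0,1]$, a probability space. On a probability space the $L^p$ norms are nondecreasing in $p$ (by Jensen's inequality applied to the convex map $t\mapsto t^{q/p}$ with $q/p\ge 1$), so $\|h\|_{L^p([0,1])}\le\|h\|_{L^2([0,1])}$ for every $1\le p\le 2$. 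Hence it suffices to prove $d_2(G,G')\le\sqrt2$.

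For the case $p=2$, write $f:=F^{-1}_{G}$ and $g:=F^{-1}_{G'}$. By Proposition \ref{p:cumulative functions} these are nonnegative, nondecreasing (step) functions on $[0,1]$ with values in $[0,2]$, and they satisfy the normalization $\int_0^1 f\,dx=\int_0^1 g\,dx=1$. Expanding the square gives
\begin{equation*}
d_2(G,G')^2=\int_0^1 (f-g)^2\,dx=\int_0^1 f^2\,dx+\int_0^1 g^2\,dx-2\int_0^1 fg\,dx.
\end{equation*}
Since $f$ and $g$ are both nonnegative and monotonically increasing, they satisfy the hypotheses of Lemma \ref{l:Chebyshev}, which bounds the cross term from below:
\begin{equation*}
\int_0^1 fg\,dx\ge \int_0^1 f\,dx\int_0^1 g\,dx=1.
\end{equation*}

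It remains to control the two quadratic terms. Here I would use the support constraint $0\le f\le 2$ to get the pointwise inequality $f^2\le 2f$, so that $\int_0^1 f^2\,dx\le 2\int_0^1 f\,dx=2$, and likewise $\int_0^1 g^2\,dx\le 2$. Combining these bounds with the Chebyshev estimate yields $d_2(G,G')^2\le 2+2-2\cdot 1=2$, i.e. $d_2(G,G')\le\sqrt2$, and the reduction step then gives the claim for all $1\le p\le 2$. The proof is short once the ingredients are lined up; the only point that needs care is verifying that $f$ and $g$ genuinely meet the monotonicity and normalization hypotheses so that both Lemma \ref{l:Chebyshev} and the estimate $\int f^2\le 2\int f$ apply, which is exactly what Proposition \ref{p:cumulative functions} provides.
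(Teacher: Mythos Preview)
Your proof is correct and follows essentially the same approach as the paper: reduce to $p=2$ via monotonicity of $L^p$ norms on the probability space $[0,1]$, expand the square, bound the cross term from below by Chebyshev's inequality, and bound $\int f^2\le 2\int f$ using $0\le f\le 2$. The paper presents these steps in a single chain of inequalities rather than separating the reduction and the $p=2$ case, but the ingredients and their use are identical.
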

\begin{proof}
Let us denote $f=F^{-1}_{G}$ and $g=F^{-1}_{G'}.$ Then by
Chebyshev inequality \eqref{e:Chebyshev} and Proposition \ref{p:cumulative functions} (c),
%the normalization \eqref{e:normalization},
$$\int_0^1 fg\geq \int_0^1 f\int_0^1g=1.$$
Hence, for any $1\leq p\leq 2$ we have
\begin{eqnarray*}
\left(\int_{0}^1|f-g|^p\right)^{2/p}&\leq &\int_0^1|f-g|^2=\int_0^1f^2+\int_0^1g^2-2\int_0^1fg\\
&\leq& 2\int_0^1f+2\int_0^1g-2=2,
\end{eqnarray*} where we have used that $f\leq 2$ and $g\leq 2.$
This proves the theorem.
\end{proof}

In the next section, we will give a tighter upper bound for the
diameter estimates. In particular, in the case of $p=1,$ we derive
an optimal upper bound, that is, we will prove that
$\diam(\FG,d_1)=1.$ The tightness of this estimate can be seen from the following two examples.

\begin{example}\label{example:distance equals one}
Let $G=\{\cdot\}$ and $G'=P_2$ be the path on two vertices. Then $\sigma(G')=\{0,2\}$. Hence we have
\begin{equation*}
d_p(G, G')=1, \ \ p\in[1,\infty).
\end{equation*}
\end{example}
The following example is more convincing.
\begin{example}\label{example:distance approach one}
Let $G'=P_2$ be the path on two vertices and $G_N$ an unweighted (i.e. $\theta_{xy}=1$ for every edge $xy$) complete graph on $N$ vertices. Then it is known that
\begin{equation}\label{e:spectral of complete graph}\sigma(G_N)=\{0,\underbrace{\frac{N}{N-1}, \ldots, \frac{N}{N-1}}_{N-1}\}.\end{equation}
Therefore we have
\begin{equation*}
d_p(G_N,G')=\left[\left(\frac{1}{2}-\frac{1}{N}\right)\frac{N^p}{(N-1)^p}+\frac{1}{2}\left(2-\frac{N}{N-1}\right)^p\right]^{\frac{1}{p}}.
\end{equation*}
In particular, $d_1(G_N, G')=1-\frac{1}{N-1}$. Observe that
\begin{equation*}
\lim_{N\rightarrow +\infty}d_p(G_N, G')=1.
\end{equation*}
\end{example}

\section{The proof of the diameter estimate}\label{s:maintheorem}
This section is devoted to the proofs of Theorem \ref{t:diam estimate}, \ref{t:measure version} and Theorem
\ref{t:analytic version}. We first prove some lemmata.

We call a function $f:[0,1]\to [0,2]$ an \emph{admissible $2$-step
function} if there exist $a\in[0,\frac12]$ and $b\in[\frac12,1]$
such that
\begin{equation}\label{e:step admissible}f(x)=\left\{\begin{array}{ll}0, & 0\leq x< a, \\
\frac{2b-1}{b-a}, & a\leq x<b,\\
2,& b\leq x\leq 1.
\end{array}
\right.\end{equation} In particular, we say $f$ jumps at $a$ and
$b$. Clearly, $\int_0^1f(x)dx=1$ and $\int_0^2f^{-1}(x)dx=1.$ The
name for a $2$-step function is evident from the graph of the
function. In particular, any inverse function $F^{-1}_G$ of a
cumulative distribution function of a graph $G$ with $3$ vertices is
an admissible $2$-step function.

\begin{lemma}\label{l:2-step estimate}
Let $f,g$ be admissible $2$-step functions on $[0,1]$. Then we have
\begin{equation}\label{e:bounds for 2 step admissible functions}\int_0^1|f-g|(x)dx\leq 1,\end{equation}
where "$=$" holds if and only if (ignoring the order of $f,g$)
\begin{equation}\label{e:rigidity2stepfunction}
f(x)=\left\{
       \begin{array}{ll}
         0, & \hbox{$0\leq x<\frac{1}{2}$;} \\
         2, & \hbox{$\frac{1}{2}\leq x\leq 1$,}
       \end{array}
     \right.\,\,\,g(x)=1,\,\, 0\leq x\leq 1.
\end{equation}
\end{lemma}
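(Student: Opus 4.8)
The plan is to pass to the cumulative-distribution side, where each function has only a \emph{single} interior jump, and to reduce the whole estimate to bounding the area of one rectangle. Let $\mu,\nu$ be the probability measures on $[0,2]$ whose quantile functions (inverse cumulative distribution functions) are $f$ and $g$; since $f,g$ are admissible $2$-step functions, $\mu,\nu$ are supported on three points $\{0,c,2\}$ with first moment $1$. Write $\phi,\psi:[0,2]\to[0,1]$ for the corresponding cumulative distribution functions, so that $\phi=a_1$ on $[0,c_1)$ and $\phi=b_1$ on $[c_1,2)$ with $a_1\le\frac12\le b_1$, and similarly for $\psi$ with $a_2,b_2,c_2$. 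By Lemma \ref{l:cumulative dis} together with the classical identity $\int_0^1|F_\mu^{-1}-F_\nu^{-1}|=\int_0^2|F_\mu-F_\nu|$ I get $\int_0^1|f-g|=\int_0^2|\phi-\psi|$, while Proposition \ref{p:cumulative functions}(b) records the two normalizations $\int_0^2\phi=\int_0^2\psi=1$, i.e. $c_1a_1+(2-c_1)b_1=1$ and $c_2a_2+(2-c_2)b_2=1$.

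Next I exploit that $\int_0^2(\phi-\psi)=0$, so $\int_0^2|\phi-\psi|=2\int(\phi-\psi)^+=2\int(\psi-\phi)^+$. Assuming WLOG $c_1\le c_2$, on the middle strip $[c_1,c_2)$ one has $\phi=b_1\ge\frac12\ge a_2=\psi$, so $\phi\ge\psi$ there, while on each of the two outer strips the sign of $\phi-\psi$ is constant. Consequently, in every configuration at least one of $(\phi-\psi)^+,(\psi-\phi)^+$ is supported on a \emph{single} strip and hence equals the area $w\cdot h$ of one rectangle (width $w\in\{c_1,\,c_2-c_1,\,2-c_2\}$, height $h$ the relevant gap of values); the only exception is when $\phi\ge\psi$ everywhere, which forces $\phi=\psi$ and $f=g$. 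The whole estimate therefore reduces to showing this single rectangle has area at most $\tfrac12$.

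To bound the rectangle I use the moment constraint of whichever measure controls its height: from $\int\phi=1$ and $\phi\ge0$ one gets $b_1\le\frac1{2-c_1}$, and from $\int\psi=1$ and $\psi\le1$ one gets $a_2\ge\max(0,1-\frac1{c_2})$, together with the symmetric statements. A short one- or two-variable computation then shows each rectangle area is at most $\tfrac12$; for instance the middle rectangle satisfies $(c_2-c_1)\bigl(\tfrac1{2-c_1}-\max(0,1-\tfrac1{c_2})\bigr)\le\tfrac12$ on $0\le c_1\le c_2\le 2$, the key point being that the dichotomy $c\lessgtr 1$ selects which bound ($h\le\frac1{2-c}$ or $h\le1$) is binding. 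This yields $\int_0^1|f-g|\le1$. For the rigidity I trace equality back: attaining $\tfrac12$ forces the controlling jump location to be $c=1$ and the controlling value to saturate ($0$ or $1$), which pins $\phi,\psi$ to be the cumulative distribution functions of $\tfrac12\delta_0+\tfrac12\delta_2$ and $\delta_1$; translating back to quantiles gives exactly \eqref{e:rigidity2stepfunction}, up to the order of $f,g$. The degenerate case $c_1=c_2$ is handled directly, where $\int|\phi-\psi|\le c_1|a_1-a_2|+(2-c_1)|b_1-b_2|\le1$ and equality, combined with the two normalizations, again forces the same pair.

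The main obstacle is the \emph{coupling} of the two normalizations. Estimating $|\phi-\psi|$ strip by strip using only the pointwise bounds $0\le\phi,\psi\le1$ loses a constant factor (one obtains $2\log 2>1$), so the proof must use $\int\phi=1$ and $\int\psi=1$ jointly rather than separately. The device that makes this succeed is precisely the reduction to a single rectangle together with the $c\lessgtr1$ dichotomy; extracting the clean constant $\tfrac12$ from the resulting optimization, and simultaneously reading off the \emph{unique} equality configuration claimed in \eqref{e:rigidity2stepfunction}, is the delicate part.
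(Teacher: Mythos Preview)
Your argument is correct and takes a genuinely different route from the paper's proof.

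The paper works directly with the quantile functions $f,g$ (each having two interior jumps at $a,b$ and $c,d$) and carries out a four-case analysis according to the ordering of $a,b,c,d$, further splitting into subcases according to the relative heights $h_1,h_2$; each subcase is settled by a geometric reflection trick that slides one of the regions between the graphs so that the total fits inside the unit-area region under $g$ or under $g^{-1}$.

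Your approach dualizes via the identity $\int_0^1|F_\mu^{-1}-F_\nu^{-1}|=\int_0^2|F_\mu-F_\nu|$, which turns the two-jump quantile functions into \emph{one}-jump cumulative functions $\phi,\psi$. The balance $\int(\phi-\psi)=0$ then lets you halve the problem, and your key structural observation --- that on the middle strip one always has $\phi=b_1\ge\tfrac12\ge a_2=\psi$, so at least one of $(\phi-\psi)^+$, $(\psi-\phi)^+$ lives on a single strip --- reduces everything to bounding one rectangle by $\tfrac12$. The moment constraints $c_ia_i+(2-c_i)b_i=1$ combined with the trivial bounds $0\le a_i\le\tfrac12\le b_i\le1$ then do the job with a short optimization (the $c\lessgtr1$ dichotomy you note is exactly what keeps the bound at $\tfrac12$ rather than something larger). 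This is a cleaner reduction than the paper's: the case structure is more uniform (three rectangle types rather than four orderings with nested subcases), and the rigidity analysis falls out by tracing which inequalities saturate rather than by inspecting each reflection picture separately. The paper's proof, on the other hand, is entirely elementary-geometric and avoids the CDF/quantile duality; its pictures make the equality case visually obvious once one has the right reflection.

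One small point worth making explicit when you write this up: in the middle-rectangle bound you quote $b_1\le\frac{1}{2-c_1}$, but for $c_1>1$ this exceeds $1$ and the binding bound is simply $b_1\le1$; you allude to this with the ``$c\lessgtr1$ dichotomy'' but it should be stated, since otherwise the displayed expression $(c_2-c_1)\bigl(\tfrac1{2-c_1}-\max(0,1-\tfrac1{c_2})\bigr)$ can exceed $\tfrac12$.
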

Observe that the inverse cumulative distribution functions in Example \ref{example:distance equals one} are exactly the two functions in (\ref{e:rigidity2stepfunction}).

\begin{proof}
Let $f:[0,1]\to [0,2]$ ($g:[0,1]\to [0,2]$ resp.) be an admissible
$2$-step function jumping at $a$ and $b$ ($c$ and $d$ resp.). Denote
the height of the first jump of $f$ and $g$ by
$h_1:=\frac{2b-1}{b-a}$ and $h_2:=\frac{2d-1}{d-c}$ respectively.

The proof is divided into four cases and several subcases as follows:

\emph{{\bf Case 1.}} $0\leq a\leq c\leq \frac12\leq d\leq b\leq 2.$

\emph{Subcase 1.1.} $h_2\geq h_1.$ See Fig. \ref{F:1.1}.

For each domain $\textrm{I}$ ($\textrm{II}$ resp.) in Fig. \ref{F:1.1}, we denote by $|\textrm{I}|$
($|\textrm{II}|$ resp.) the area of that domain. We reflect the domain $\textrm{II}$
along the line $\{x=c\}$ to obtain a new domain $\textrm{II}'.$ By the fact
that $c\leq \frac{1}{2},$ we have
$$\int_0^1|f-g|=|\textrm{I}|+|\textrm{II}|=|\textrm{I}|+|\textrm{II}'|\leq \int_0^1 g= 1.$$

\emph{Subcase 1.2.} $h_2< h_1.$ See Fig. \ref{F:1.2}.

Reflect the domain $\textrm{I}$ along the line $\{x=d\}$ to obtain $\textrm{I}'.$ Then
$$\int_0^1|f-g|=|\textrm{I}|+|\textrm{II}|=|\textrm{I}'|+|\textrm{II}|\leq \int_0^2 g^{-1}(y)dy= 1.$$

\begin{figure}[h]
\begin{minipage}[t]{0.45\linewidth}
\centering
\includegraphics[scale=0.2]{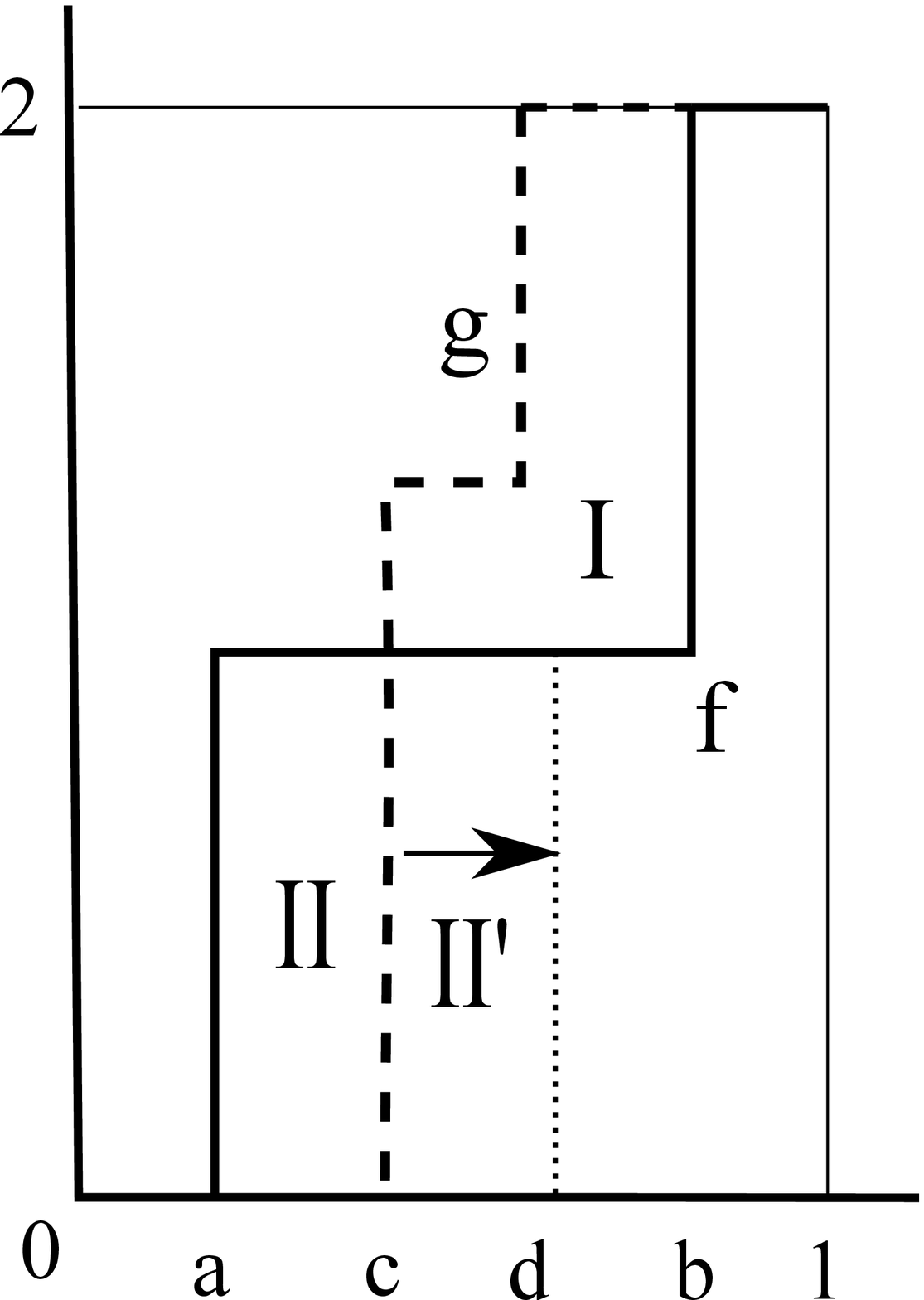}
\caption{\label{F:1.1} Subcase 1.1}
\end{minipage}
\hfill
\begin{minipage}[t]{0.45\linewidth}
\centering
\includegraphics[scale=0.2]{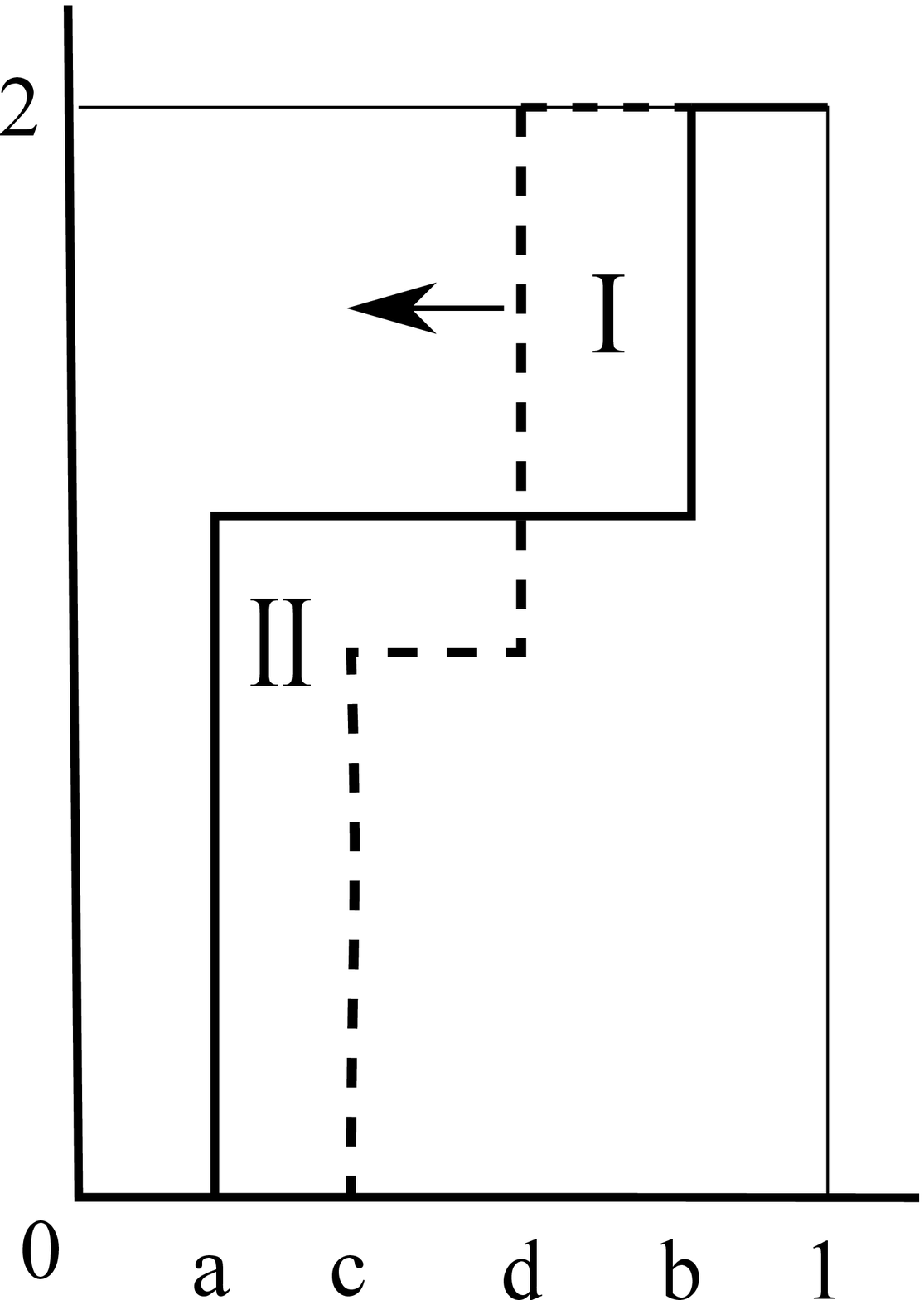}
\caption{\label{F:1.2} Subcase 1.2}
\end{minipage}
\end{figure}

%\begin{figure}[h]
%\begin{minipage}[t]{0.45\linewidth}
%\centering
%\includegraphics[width=\textwidth]{Fig1_1.eps}\label{F:1.1}
%\caption{(4,8,8) in $\mathds{R}^2$ \label{5}}
%\end{minipage}
%\hfill
%\begin{minipage}[t]{0.45\linewidth}
%\centering
%\includegraphics[width=\textwidth]{Fig1_2.eps}
%\caption{(4,8,8) in
%$\mathds{R}P^2\setminus\{\underline{o}\}$\label{4}}
%\end{minipage}
%\end{figure}

\emph{{\bf Case 2.}} $0\leq a\leq c\leq \frac12\leq b< d\leq 2.$

We claim that $h_1\leq h_2.$ Suppose not, by Fig \ref{F:2}, we have
$$1=\int_0^1 f>\int_0^1 g=1,$$ which is a contradiction. This proves the
claim.

\emph{Subcase 2.1.} $h_1\geq 1,$ see Fig. \ref{F:2.1}.

Reflect the domain $\textrm{II}$ along the line $\{y=h_1\}$ to get $\textrm{II}'.$
Since $h_1\geq 1,$
$$\int_0^1|f-g|=|\textrm{I}|+|\textrm{II}|+|\textrm{III}|=|\textrm{I}|+|\textrm{II}'|+|\textrm{III}|\leq \int_0^1 f=1.$$

\begin{figure}[h]
\begin{minipage}[t]{0.45\linewidth}
\centering
\includegraphics[scale=0.2]{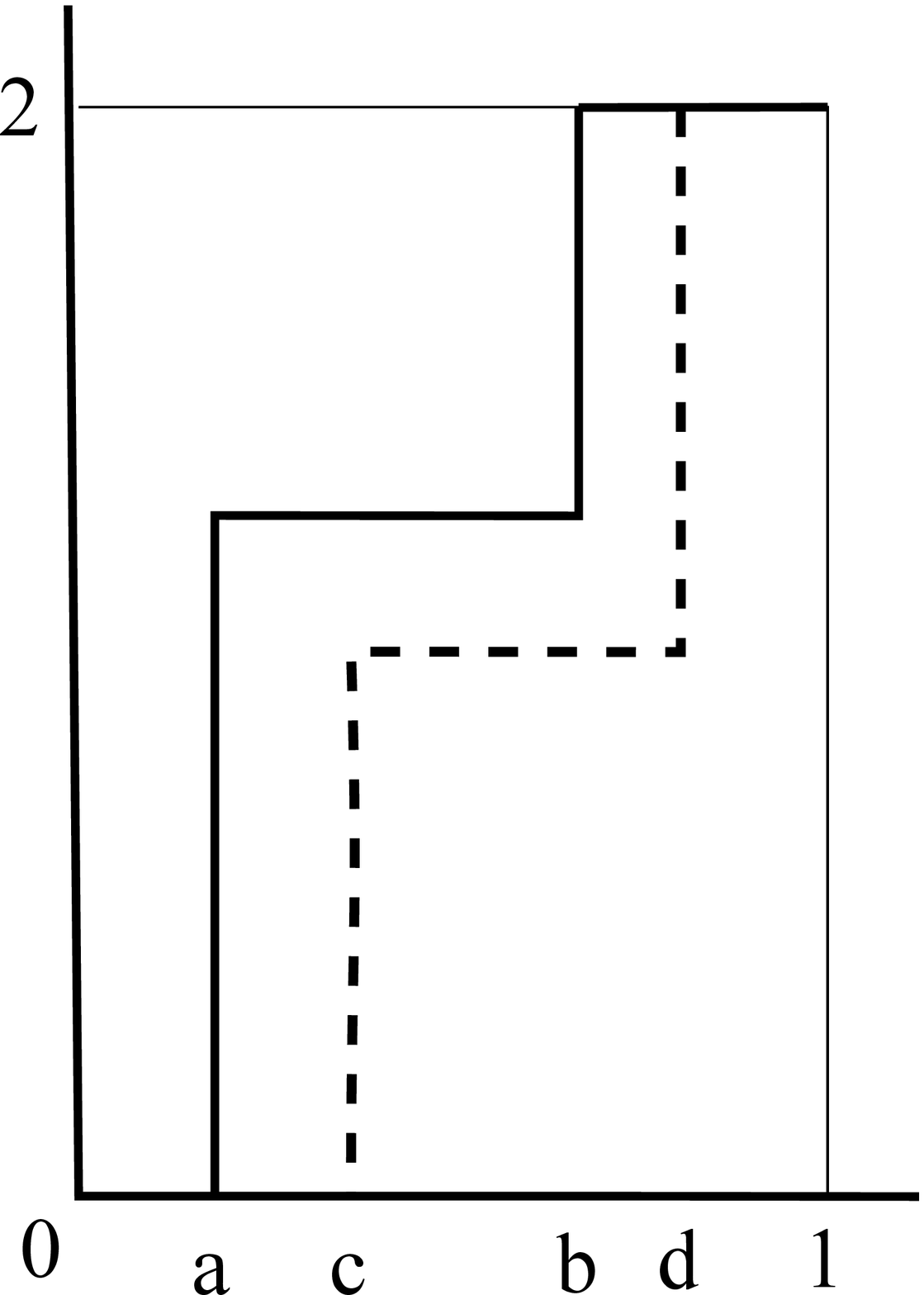}
\caption{\label{F:2} Case 2: the proof of $h_1\leq h_2$}
\end{minipage}
\hfill
\begin{minipage}[t]{0.45\linewidth}
\centering
\includegraphics[scale=0.2]{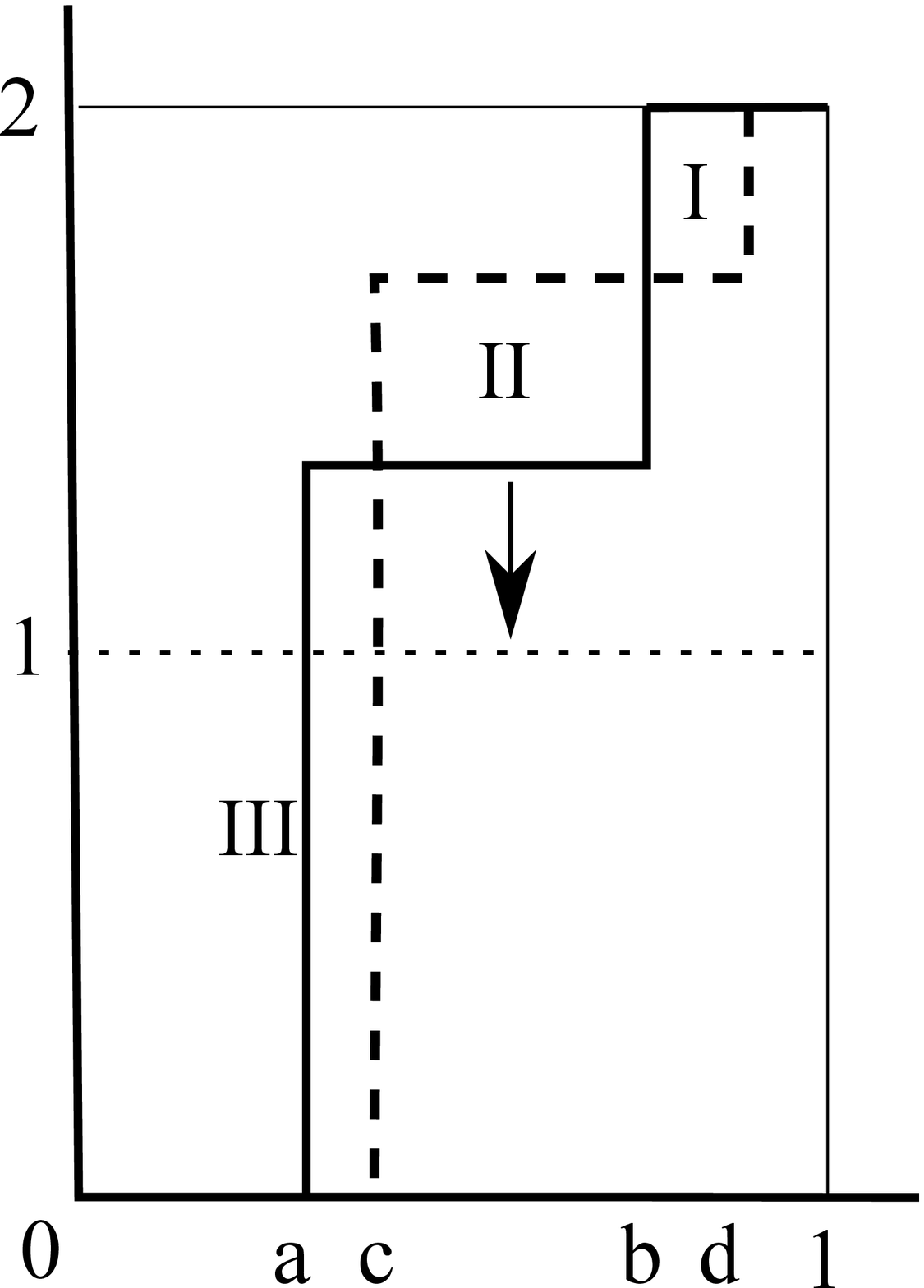}
\caption{\label{F:2.1} Subcase 2.1}
\end{minipage}
\end{figure}

\emph{Subcase 2.2.} $h_1<1.$ Further, we divide it into more
subcases.

\emph{Subcase 2.2.1.} $h_2\leq 1,$ see Fig. \ref{F:2.2.1}.

Reflect the domain $\textrm{II}$ along the line $\{y=h_2\}$ to have $\textrm{II}'.$ By
$h_2\leq 1,$
$$\int_0^1|f-g|=|\textrm{I}|+|\textrm{II}|+|\textrm{III}|=|\textrm{I}|+|\textrm{II}'|+|\textrm{III}|\leq \int_0^2g^{-1}(y)dy=1.$$

\emph{Subcase 2.2.2.} $h_2> 1.$ Moreover,

\emph{Subcase 2.2.2.1.} $h_2-h_1\leq 1.$

Then by the basic estimate,
\begin{eqnarray*}\int_0^1|f-g|&=&|\textrm{I}|+|\textrm{II}|+|\textrm{III}|=(2-h_2)(d-b)+(h_2-h_1)(b-c)+h_1(c-a)\\
&\leq& d-b+b-c+c-a=d-a \ \ \ \ ({\rm by}\ \max\{2-h_2,h_2-h_1,h_1\}\leq 1)\\
&\leq& 1.\end{eqnarray*}

\emph{Subcase 2.2.2.2.} $h_2-h_1> 1,$ see Fig. \ref{F:2.2.2.2}.

Reflect $\textrm{I}$ along the line $\{y=h_2\}$ to obtain $\textrm{I}',$ and $\textrm{III}$
along the line $\{x=c\}$ to obtain $\textrm{III}'.$ Then by the fact
$h_2-h_1\geq 1\geq 2-h_2,$ $\textrm{I}'\cap \textrm{III}'=\emptyset.$ Thus,
$$\int_0^1|f-g|=|\textrm{I}|+|\textrm{II}|+|\textrm{III}|=|\textrm{I}'|+|\textrm{II}|+|\textrm{III}'|\leq \int_0^1 g=1.$$

\begin{figure}[h]
\begin{minipage}[t]{0.45\linewidth}
\centering
\includegraphics[scale=0.2]{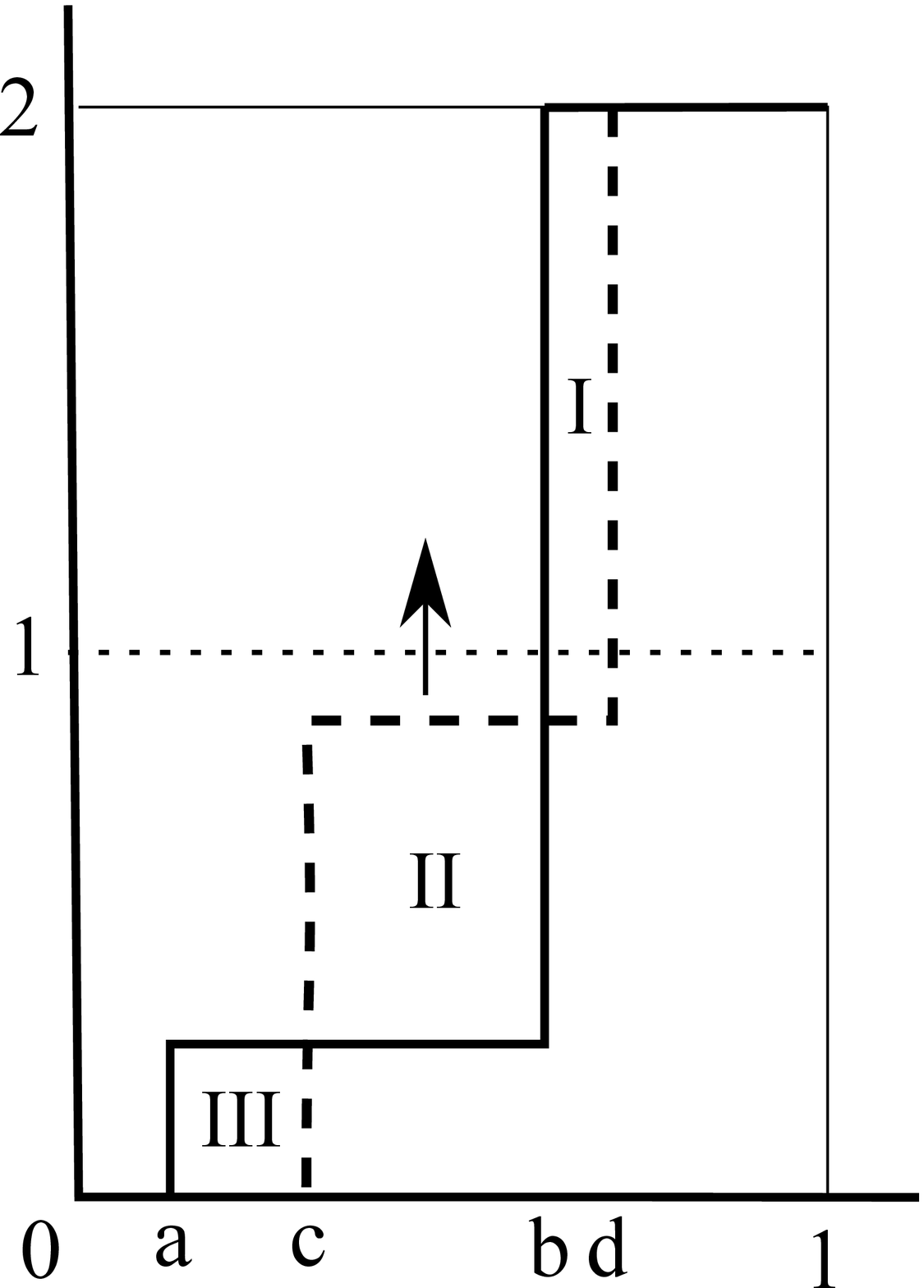}
\caption{\label{F:2.2.1} Subcase 2.2.1}
\end{minipage}
\hfill
\begin{minipage}[t]{0.45\linewidth}
\centering
\includegraphics[scale=0.2]{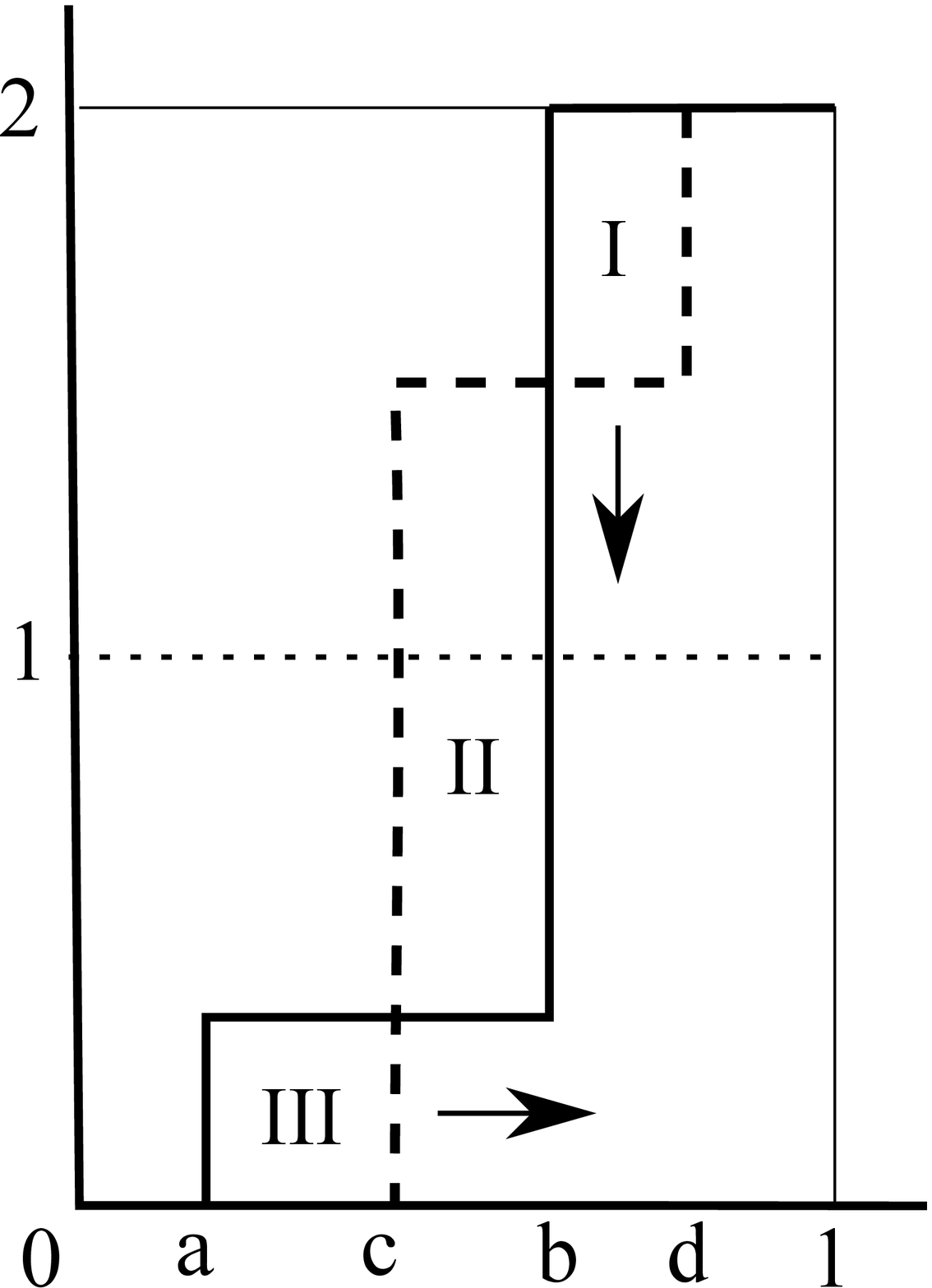}
\caption{\label{F:2.2.2.2}Subcase 2.2.2}
\end{minipage}
\end{figure}

\emph{{\bf Case 3.}} $0\leq c< a\leq \frac12\leq b< d\leq 2.$
By interchanging the role of $a,b$ and $c,d,$ this reduces to the
Case 1.

\emph{{\bf Case 4.}} $0\leq c< a\leq \frac12\leq d\leq b\leq 2.$
This reduces to Case 2 by the same change as in Case 3.

Combining all the cases and subcases, we prove (\ref{e:bounds for 2 step admissible functions}). Finally, we can check case by case that the equality in
(\ref{e:bounds for 2 step admissible functions}) can be achieved only when $f$ and $g$ are the functions given by the relation (\ref{e:rigidity2stepfunction}).
%And in Subcase 2.2.2.2, it is impossible to have equality, since we can not ensure $2-h_2, h_2-h_1, h_1=1$ simultaneously.
This completes the proof.
\end{proof}

Before proving the next lemma, we recall some basic facts from the
convex analysis. Let $\Omega$ be a convex subset of $\R^N,$ possibly
having lower Hausdorff dimension. A function $f:\Omega\to \R$ is
called \emph{convex} if for any $x,y\in \Omega$ and $0\leq t\leq 1,$
$$f(tx+(1-t)y)\leq tf(x)+(1-t)f(y).$$ In particular, for any norm $\|\cdot\|$ on $\R^N$, the function
$f:\R^N\to\R$ defined by $f(x)=\|x-x_0\|$ for some fixed $x_0$ is a
convex function. We say a point $x\in \Omega$ is \emph{extremal} if it cannot
be written as the nontrivial convex combination of two other points
in $\Omega,$ i.e. if $x=tx_1+(1-t)x_2$ for some $0<t<1$ and
$x_1,x_2\in \Omega,$ then $x=x_1=x_2.$ The set of extremal points of
a convex set $\Omega$ is denoted by $\ex(\Omega).$ A subset
$P\subset\R^N$ is called a (closed) convex polytope if it is the
intersection of finite many half spaces, i.e. there exist $K\in \N$
linear functions $\{L_j\}_{j=1}^K$ on $\R^N$ such that
$$P=\bigcap_{j=1}^K\{x\in\R^N: L_j(x)\leq 0\}.$$
We state a well-known fact which will be used to prove the next
lemma.

\begin{fact}\label{f:fact on convex}
Let $P$ be a compact convex polytope in $\R^N$ and $f:P\to\R$ a
convex function. Then
\begin{equation}
\max_{P}f=\max_{\ex(P)}f.
\end{equation}
\end{fact}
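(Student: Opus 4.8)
The plan is to prove the two inequalities $\max_P f\le \max_{\ex(P)}f$ and $\max_P f\ge \max_{\ex(P)}f$ separately. The second is immediate, since $\ex(P)\subseteq P$. Hence the entire content sits in the first inequality, which I would deduce from the structural fact that every point of $P$ is a convex combination of finitely many extremal points, combined with Jensen's inequality for the convex function $f$.

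First I would establish the structural input: for a compact convex polytope $P=\bigcap_{j=1}^K\{x\in\R^N: L_j(x)\le 0\}$, the set $\ex(P)$ is finite and $P=\mathrm{conv}(\ex(P))$. This is the Minkowski--Weyl representation of a bounded polyhedron. For a self-contained argument I would induct on $\dim P$: if $x$ lies in the relative interior of $P$, a line through $x$ meets the relative boundary in two points, each lying on a proper face that is itself a compact convex polytope of strictly smaller dimension whose extremal points are also extremal in $P$; the base case $\dim P=0$ is a single point. Finiteness of $\ex(P)$ follows because each extremal point is pinned down by a subsystem of the defining inequalities turned into equalities, and there are only finitely many such subsystems.

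Granting this, I would write an arbitrary $x\in P$ as $x=\sum_{i=1}^m t_i v_i$ with $v_i\in\ex(P)$, $t_i\ge 0$ and $\sum_{i=1}^m t_i=1$. Applying the definition of convexity inductively (Jensen's inequality) gives
\[
f(x)=f\Big(\sum_{i=1}^m t_i v_i\Big)\le \sum_{i=1}^m t_i f(v_i)\le \Big(\sum_{i=1}^m t_i\Big)\max_{\ex(P)}f=\max_{\ex(P)}f.
\]
Since this holds for every $x\in P$, we obtain $\sup_P f\le \max_{\ex(P)}f$; as $\ex(P)$ is a finite nonempty subset of $P$, the supremum is attained and equals $\max_{\ex(P)}f$. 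Note that this argument requires no continuity of $f$: the bound $f\le \max_{\ex(P)}f$ together with $\ex(P)\subseteq P$ already forces equality of the two maxima.

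The main obstacle is precisely the representation $P=\mathrm{conv}(\ex(P))$ with $\ex(P)$ finite; once this is in hand, the convexity step is entirely routine. Since the statement is flagged as a well-known fact, I would most likely simply invoke the Minkowski--Weyl theorem rather than reproduce the inductive proof in full.
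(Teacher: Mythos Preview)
Your argument is correct and is the standard route to this result: reduce to the Minkowski--Weyl representation $P=\mathrm{conv}(\ex(P))$ with $\ex(P)$ finite, then apply Jensen's inequality. Your observation that no continuity assumption on $f$ is needed, and that attainment of $\max_P f$ follows automatically from the bound together with finiteness of $\ex(P)$, is also correct.

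There is nothing to compare against: the paper does not prove this statement at all. It is recorded as a ``well-known fact'' and simply invoked in the proof of Lemma~\ref{l:vector version result}. Your write-up therefore supplies strictly more than the paper does. If you were to include it, the suggestion in your final paragraph---cite Minkowski--Weyl rather than reproduce the dimension induction---matches the spirit of the paper, which treats this as background.
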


The following lemma is the special case of Theorem \ref{t:diam estimate} when two graphs have the same number of vertices.
\begin{lemma}\label{l:vector version result}
Let $N\geq 1.$ Assume that $\al=(\al_i)_{i=1}^N$ and
$\be=(\be_i)_{i=1}^N$ satisfy $0\leq \al_1\leq \cdots\leq \al_N\leq
2$ and $0\leq \be_1\leq \cdots\leq \be_N\leq 2$ and
$$\|\al\|_{\ell^1}=\|\be\|_{\ell^1}=N.$$ Then we have
$$\|\al-\be\|_{\ell^1}\leq N.$$
\end{lemma}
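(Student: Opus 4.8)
The plan is to reduce the inequality to Lemma~\ref{l:2-step estimate} through a convexity argument. First I would fix $N$ and introduce the set
$$K:=\{\al\in\R^N: 0\leq \al_1\leq\cdots\leq\al_N\leq 2,\ \|\al\|_{\ell^1}=N\},$$
which is a compact convex polytope, being the intersection of the half-spaces $\{\al_1\geq 0\}$, $\{\al_{i+1}-\al_i\geq 0\}$, $\{\al_N\leq 2\}$ with the hyperplane $\{\sum_i\al_i=N\}$. The pair $(\al,\be)$ then ranges over the product $P:=K\times K$, again a compact convex polytope in $\R^{2N}$, and the function $f(\al,\be):=\|\al-\be\|_{\ell^1}$ is convex, being the $\ell^1$-norm precomposed with the linear map $(\al,\be)\mapsto\al-\be$. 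By Fact~\ref{f:fact on convex}, $\max_P f=\max_{\ex(P)}f$; since extremal points of a product of convex sets are products of extremal points, $\ex(P)=\ex(K)\times\ex(K)$. Hence it suffices to prove $\|\al-\be\|_{\ell^1}\leq N$ when both $\al,\be\in\ex(K)$.

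The main step is to describe $\ex(K)$ explicitly. I claim each extremal point of $K$ has the shape
$$(\underbrace{0,\ldots,0}_{k},\,c,\,\underbrace{2,\ldots,2}_{m}),\qquad k+m+1=N,\ c\in[0,2],$$
that is, a block of zeros, one intermediate value, and a block of twos. The reason is a constraint count: $K$ lives in the hyperplane $\{\sum_i\al_i=N\}$ and so has dimension $N-1$, so a vertex must activate $N-1$ of the inequality constraints $\{\al_1\geq 0\}$, $\{\al_{i+1}=\al_i\}$, $\{\al_N\leq 2\}$; this forces all but at most one of the coordinates to equal $0$ or $2$. The normalization $c+2m=N$ then pins down $c=2k+2-N$, and the requirement $c\in[0,2]$ forces $k\in[\tfrac N2-1,\tfrac N2]$.

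Finally I would pass to inverse cumulative distribution functions. For a sorted vector $\al$ the function $F^{-1}_{\al}$ equal to $\al_i$ on $[\tfrac{i-1}{N},\tfrac iN)$ satisfies $\int_0^1|F^{-1}_{\al}-F^{-1}_{\be}|=\tfrac1N\|\al-\be\|_{\ell^1}$, since both vectors are sorted and split $[0,1]$ into the same $N$ equal subintervals. For an extremal $\al$ as above, $F^{-1}_{\al}$ is precisely an admissible $2$-step function jumping at $a=\tfrac kN$ and $b=\tfrac{k+1}N$ with intermediate height $\tfrac{2b-1}{b-a}=2k+2-N=c$; the bound $k\in[\tfrac N2-1,\tfrac N2]$ derived above guarantees exactly $a\leq\tfrac12\leq b$, so the admissibility hypothesis of Lemma~\ref{l:2-step estimate} holds. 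Applying that lemma to $F^{-1}_{\al},F^{-1}_{\be}$ yields $\tfrac1N\|\al-\be\|_{\ell^1}=\int_0^1|F^{-1}_{\al}-F^{-1}_{\be}|\leq 1$, i.e. $\|\al-\be\|_{\ell^1}\leq N$ on $\ex(P)$, and the convexity reduction finishes the proof. The chief obstacle is the extremal-point characterization together with the verification that the induced jump locations automatically land in the admissible ranges $[0,\tfrac12]$ and $[\tfrac12,1]$; once this compatibility with Lemma~\ref{l:2-step estimate} is secured, the remainder is routine.
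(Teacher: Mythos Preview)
Your overall strategy---reduce to extreme points of the polytope by convexity, identify the extreme points, and then invoke Lemma~\ref{l:2-step estimate}---is exactly the route taken in the paper. The gap is in your description of $\ex(K)$.

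Your constraint count is right: a vertex of the $(N-1)$-dimensional polytope $K$ must activate $N-1$ of the $N+1$ inequalities $\al_1\ge 0$, $\al_{i+1}\ge\al_i$ ($1\le i\le N-1$), $\al_N\le 2$. But the deduction ``this forces all but at most one of the coordinates to equal $0$ or $2$'' is incorrect. Activating the constraint $\al_{i+1}=\al_i$ only forces two consecutive coordinates to be \emph{equal}, not to equal $0$ or $2$. What the count actually gives is that at most two of the $N+1$ constraints are slack, so the sequence $(\al_i)$ has at most two strict increases; hence every extreme point has the block form
\[
(\underbrace{0,\ldots,0}_{k},\underbrace{a,\ldots,a}_{N-k-l},\underbrace{2,\ldots,2}_{l}),\qquad 0\le k,l\le \tfrac{N}{2},\quad a=\frac{N-2l}{N-k-l},
\]
with a middle block of arbitrary length, not length one. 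For a concrete counterexample to your description, take $N=4$: the point $(1,1,1,1)$ is a vertex of $K$ (all three equalities $\al_{i+1}=\al_i$ are active), and so is $(0,\tfrac43,\tfrac43,\tfrac43)$, yet neither has only a single intermediate coordinate. In fact your formula $c=2k+2-N$ with $k\in[\tfrac N2-1,\tfrac N2]$ produces, for $N=4$, only the single point $(0,0,2,2)$, so you would be checking the inequality on a strict subset of $\ex(K)$ and the convexity reduction would not close.

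Once the extreme-point description is corrected to the block form above, the passage to admissible $2$-step functions goes through exactly as you outline: $f_\gamma$ jumps at $k/N\in[0,\tfrac12]$ and $(N-l)/N\in[\tfrac12,1]$ with intermediate height $(N-2l)/(N-k-l)$, and Lemma~\ref{l:2-step estimate} finishes the argument.
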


\begin{proof}
Let $P$ denote the compact convex polytope $\{\al\in \R^N: 0\leq
\al_1\leq \cdots\leq \al_N\leq 2, \|\al\|_{\ell^1}=N\}.$ Then by the
induction on $N,$ one can show that the set of extremal points of
$P$ is
$$\ex(P)=\left\{(\underbrace{0,\cdots,0}_k,\underbrace{a,\cdots,a}_{N-k-l},\underbrace{2,\cdots,2}_l):
0\leq k,l\leq \frac{N}{2}, a=\frac{N-2l}{N-k-l}\right\}.$$

We divide the interval $[0,1]$ equally into $N$ subintervals
$\{[\frac{i-1}{N},\frac{i}{N}]\}_{i=1}^N.$ Then for any $\al\in P,$
we define a step function $f_{\al}:[0,1]\to [0,2]$ by
$$f_{\al}|_{[\frac{i-1}{N},\frac{i}{N}]}=\al_i.$$ Clearly, $\int_0^1
f_{\al}=\frac{1}{N}\|\al\|_{\ell^1}=1.$ In addition, for any
$\gamma\in \ex(P),$ $f_{\gamma}$ is an admissible $2$-step function
defined in \eqref{e:step admissible}.

Note that for any fixed $\be_0\in \R^N,$ the function
$F:\R^N\ni\al\mapsto\|\al-\be_0\|_{\ell^1}\in \R$ is a convex
function on $\R^N.$ We claim that
\begin{equation}\label{e:maximum}\max_{\substack{\al\in
P\\\be\in P}}\|\al-\be\|_{\ell^1}=\max_{\substack{\gamma\in \ex(P)\\
\theta\in \ex(P)}}\|\gamma-\theta\|_{\ell^1}.\end{equation} By Fact
\ref{f:fact on convex},
\begin{eqnarray*}
\max_{\substack{\al\in P\\\be\in P}}\|\al-\be\|_{\ell^1}&=&\max_{\be\in P}\max_{\al\in P}\|\al-\be\|_{\ell^1}=\max_{\be\in P}\max_{\gamma\in \ex(P)}\|\gamma-\be\|_{\ell^1}\\
&=&\max_{\gamma\in \ex(P)}\max_{\be\in
P}\|\gamma-\be\|_{\ell^1}=\max_{\gamma\in \ex(P)}\max_{\theta\in
\ex{(P)}}\|\gamma-\theta\|_{\ell^1}.
\end{eqnarray*} This proves the claim.

For any $\gamma,\theta\in \ex(P),$ noting that $f_{\gamma}$ and
$f_{\theta}$ are admissible $2$-step functions, by Lemma
\ref{l:2-step estimate}, we have
$$\|\gamma-\theta\|_{\ell^1}=N\int_0^1|f_{\gamma}-f_{\theta}|\leq N.$$

Combining this with \eqref{e:maximum}, we prove the lemma.
\end{proof}

Now we can prove Theorem \ref{t:analytic version}. A function
$f:[0,1]\to [0,2]$ is called \emph{a rationally distributed step function}
if there is a (rational) partition $0=r_0< r_1<r_2<\cdots<r_N=1$
with $r_i\in \Q$ for all
$0\leq i\leq N$ and an increasing sequence $0\leq a_1< \cdots< a_N\leq 2$ such that \[f(x)=\left\{\begin{array}{ll}a_1, & 0\leq x< r_1, \\
a_2, & r_1\leq x<r_2,\\
\,\,\,\,\vdots\\
a_N,& r_{N-1}\leq x\leq 1.
\end{array}
\right.\]

\begin{proof}[Proof of Theorem \ref{t:analytic version}]
First, we consider $p=1.$
By the standard approximation argument, any such functions, $f$ and
$g$, can be approximated in $L^1$ norm by a sequence of rationally
distributed step functions, say $\{f_n\}_{n=1}^{\infty}$ and
$\{g_n\}_{n=1}^{\infty},$ satisfying $\int_{0}^1f_n=\int_0^1g_n=1.$
Hence it suffices to prove the theorem for rationally distributed
step functions.

W.l.o.g., we may assume $f$ and $g$ are rationally distributed step
functions, say $f|_{[r_{i-1},r_i]}=a_i$ for $1\leq i\leq L$ and
$g|_{[t_{j-1},t_j]}=b_j$ for $1\leq j\leq K$ where $L,K\in \N.$ Let
$N$ denote the least common multiple of
$\{m_i\}_{i=1}^L\cup\{n_j\}_{j=1}^K$ where $m_i,n_j$ are the
denominators of $r_i=\frac{c_i}{m_i}$ and $t_j=\frac{d_j}{n_j}$
($c_i,m_i,d_j,n_j\in\N$), $1\leq i\leq L,1\leq j\leq K.$ Then we
have for any $1\leq p\leq N$
$$f|_{[\frac{p-1}{N},\frac{p}{N}]}=\al_p,$$
$$g|_{[\frac{p-1}{N},\frac{p}{N}]}=\be_p,$$ where
$\al_p=a_l$ and $\be_p=b_k$ for some $1\leq l\leq L,1\leq k\leq K.$
Obviously, $0\leq \al_1\leq \cdots\leq \al_{N}\leq 2,$ $0\leq
\be_1\leq \cdots\leq \be_{N}\leq 2$ and
$$\|\al\|_{\ell^1}=\|\be\|_{\ell^1}=N.$$ Hence Lemma \ref{l:vector version
result} implies that
$$\|\al-\be\|_{\ell^1}\leq N.$$ That is,
$$\int_{0}^1|f-g|\leq 1.$$

For $p\in (1,\infty),$ it can be easily derived from the result for $p=1.$
\begin{eqnarray*}\int_{0}^{1}|f-g|^p&\leq& 2^{p-1}\int_{0}^1|f-g|\\
&\leq&2^{p-1}.
\end{eqnarray*}

This proves the theorem.\end{proof}

Theorem \ref{t:measure version} then follows directly.
\begin{proof}[Proof of Theorem \ref{t:measure version}]
Let $F_{\mu}$ and $F_{\nu}$ denote the cumulative distribution functions of the
measures $\mu$ and $\nu$ respectively. Since the total area of the
square $[0,1]\times[0,2]$ is equal to $2$, by the assumption
$m_1(\mu)=m_1(\nu)=1$ we have
$$\int_{0}^1F_{\mu}^{-1}(x)dx=\int_{0}^1F_{\nu}^{-1}(x)dx=1.$$ Then
our theorem follows from Theorem \ref{t:analytic version} and Lemma \ref{l:cumulative dis}.
\end{proof}

Now we can prove Theorem \ref{t:diam estimate}.
\begin{proof}[Proof of Theorem \ref{t:diam estimate}]
This follows from Theorem \ref{t:measure version} directly.
\end{proof}

%Let $G_1$ and $G_2$ be two graphs with $N_1$ and $N_2$ vertices,
%respectively. Denoted by $F_{G_1}^{-1}$ and $F_{G_2}^{-1}$ the
%inverse functions of the cumulative distribution functions $F_{G_1}$
%and $F_{G_2}.$ In particular, $F_{G_1}^{-1}$ ($F_{G_2}^{-1}$ resp.)
%are step functions on $[0,1]$ distributed on the partition
%$\{[\frac{i-1}{N_1},\frac{i}{N_1}]\}_{i=1}^{N_1}$
%($\{[\frac{i-1}{N_2},\frac{i}{N_2}]\}_{i=1}^{N_2}$ resp.). Consider
%a refined partition of these two,
%$\{[\frac{j-1}{N_1N_2},\frac{j}{N_1N_2}]\}_{j=1}^{N_1N_2}$. Then
%both of $F_{G_1}^{-1}$ and $F_{G_2}^{-1}$ are step functions on this
%refined partition, which corresponds to two vectors $\al$ and $\be$
%in $\R^{N_1N_2}$ by setting $\al_j=F^{-1}_{G_1}(x)$ and
%\be_j=F^{-1}_{G_2}(x)$ for some $x\in
%(\frac{j-1}{N_1N_2},\frac{j}{N_1N_2}).$ Obviously, $0\leq \al_1\leq
%\cdots\leq \al_{N_1N_2}\leq 2,$ $0\leq \be_1\leq \cdots\leq
%\be_{N_1N_2}\leq 2$ and
%$$\|\al\|_{\ell^1}=\|\be\|_{\ell^1}=N_1N_2.$$ Hence Lemma \ref{l:vector version
%result} implies that
%$$\|\al-\be\|_{\ell^1}\leq N_1N_2.$$ That is,
%$$\int_0^1|F^{-1}_{G_1}-F^{-1}_{G_2}|\leq 1.$$ This proves the
%theorem.

\section{Spectral distances of infinite graphs}\label{s:spectral distances of infinite graphs}
In this section, we introduce two definitions of spectral measures
for infinite weighted graphs with countable vertex set and extend our approach of spectral distance to this setting.

\subsection{Spectral measures by exhaustion}
Let $G=(V,E,\theta)$ be an infinite weighted graph
and $G_{\Omega}:=(\Omega, E_{|\Omega}, \theta_{|\Omega\times\Omega})$ a finite connected subgraph of $G$ induced by a subset $\Omega\subset V$. We
introduce the Dirichlet boundary problem of the normalized Laplacian
on $\Omega,$ see e.g. \cite{BHJ12}. Let $\ell^2(\Omega,\theta)$
denote the space of real-valued functions on $\Omega$. Note that
every function $f\in\ell^2(\Omega,\theta)$ can be extended to a
function $\tilde{f}\in\ell^2(V,\theta)$ by setting $\tilde{f}(x)=0$
for all $x\in V\setminus\Omega$. The normalized Laplacian with the
Dirichlet boundary condition on $\Omega,$ denoted by
$\Delta_{G_\Omega},$ is defined as $\Delta_{G_\Omega}: \ell^2(\Omega,\theta)
\to \ell^2(\Omega,\theta)$,
$$\Delta_{G_\Omega} f = (\Delta_G \tilde{f})_{|\Omega}.$$ Thus for $x\in
\Omega$ the Dirichlet normalized Laplacian is pointwise defined by
$$\Delta_{G_\Omega} f(x) = f(x) - \frac{1}{\theta_x}\sum_{y\in\Omega} \theta_{xy}f(y) =
\tilde{f}(x) - \frac{1}{\theta_x}\sum_{y\in V}
\theta_{xy}\tilde{f}(y),$$
where $\theta(x)$ is the weighted degree of the entire graph. A simple calculation shows that
$\Delta_{G_\Omega}$ is a positive self-adjoint operator. We arrange the
eigenvalues of the Dirichlet Laplace operator $\Delta_{G_\Omega}$ in
nondecreasing order, i.e. $\lambda_1(\Omega)\leq \lambda_2(\Omega) \leq
\ldots \leq \lambda_N(\Omega) $, where $N$ is the cardinality of the
set $\Omega$, i.e. $N = |\Omega|$. By the trace condition, we
also have the key property $$\sum_{i=1}^N\lambda_i(\Omega)=N.$$ As same as
finite graphs, we associate it with the spectral measure,
$$\mu_{\Omega}=\frac{1}{N}\sum_{i=1}^N\delta_{\lambda_i(\Omega)}.$$ Hence $m_1(\mu_{\Omega})=1.$

A sequence of finite connected subgraphs
$\{\Omega_n\}_{n=1}^{\infty}$ is called an \emph{exhaustion} of the
infinite graph $G$ if $\Omega_n\subset\Omega_{n+1}$ for all $n\in
\N$ and $\cup_{n=1}^{\infty}\Omega_n=V.$ Hence we have a sequence of
probability measures $\{\mu_{\Omega_n}\}_{n=1}^{\infty}$ on $[0,2].$
Since $P([0,2])$ is compact under the weak topology, up to a
subsequence, w.l.o.g. we have $\mu_{\Omega_n}\rightharpoonup \mu$
for some $\mu\in P([0,2]).$ Note that any subsequence of an exhaustion is still an exhaustion. Therefore we define the spectral measures of
an infinite graph by all possible exhaustions. Note that the
convergence of the spectral structure was studied in more general
setting by Kuwae-Shioya \cite{Kuwae2003}.
\begin{definition}\label{d:infinite by exhaustion}
Let $G$ be an infinite weighted graph. We define the
\emph{spectral measures} of $G$ by exhaustion as
$$\SM(G):=\{\mu\in P([0,2]): \mbox{there\ is\ an\ exhaustion}\ \{\Omega_n\}_{n=1}^{\infty}\ s.t.\ \mu_{\Omega_n}\rightharpoonup\mu \}.$$
\end{definition}

One can show that $\SM(G)$ is a closed subset of $P([0,2])$. Since
$m_1(\mu_{\Omega_n})=1$ for any $n\in \N,$ by the weak convergence, we
have $m_1(\mu)=1$ for any $\mu\in \SM(G).$

For any metric space $(X,d),$ one can define the \emph{Hausdorff distance}
between the subsets of $X$. For any subset $A\subset X,$ we define
the distance function to the subset $A$ as $X\ni x\mapsto
d(x,A)=\inf\{d(x,y)|y\in A\},$ and the $r$-neighborhood of $A$ as
$U_r(A):=\{y\in X| d(y,A)<r\},$ $r>0.$ Given two subsets $A,B\subset
X,$ the Hausdorff distance between them is defined as
$$d_{H}(A,B):=\inf\{r>0| A\subset U_r(B), B\subset U_r(A)\}.$$
%Simple calculation shows that $d_H(A,B)=\max\{\sup_{x\in
%A}d(x,B),\sup_{y\in B}d(y,A)\}.$
One can show that the set of closed subsets of $X$ endowed with the
Hausdorff distance is a metric space.

Note that for $p\in[1,\infty),$ $P([0,2])$ endowed with the $p$-th
Wasserstein distance is a metric space and $\SM(G)$ is a closed
subset of $P([0,2])$ for any weighted graph $G.$ We denote by $\G$
the collection of all (possibly infinite) weighted graphs. Hence
$\G$ endowed with the Hausdorff distance induced from
$(P([0,2]),d_p^W)$, denoted by $d_{p,H},$ is a pseudo-metric space.

A direct application of Theorem \ref{t:measure version} yields
\begin{thm}\label{T:diameterInfi1} For $p\in [1,\infty),$
$$\diam(\G,d_{p,H})\leq 2^{1-\frac{1}{p}}.$$
\end{thm}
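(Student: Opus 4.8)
The plan is to reduce the Hausdorff‐distance diameter bound to the pointwise Wasserstein bound already established in Theorem~\ref{t:measure version}. Recall that for two closed subsets $A,B$ of a metric space, the Hausdorff distance $d_H(A,B)$ is bounded above by $\sup_{a\in A, b\in B} d(a,b)$, since both containment conditions $A\subset U_r(B)$ and $B\subset U_r(A)$ are automatically satisfied for any $r$ exceeding this supremum. Thus for any two graphs $G,G'\in\G$, we have
\begin{equation*}
d_{p,H}(\SM(G),\SM(G'))\leq \sup_{\mu\in \SM(G),\ \nu\in \SM(G')} d_p^W(\mu,\nu).
\end{equation*}

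The key point is that every measure appearing in an $\SM$ set satisfies the first-moment normalization. Indeed, as noted just before Definition~\ref{d:infinite by exhaustion}, each spectral measure $\mu_{\Omega_n}$ built from a Dirichlet exhaustion satisfies $m_1(\mu_{\Omega_n})=1$ by the trace condition $\sum_i\lambda_i(\Omega)=N$, and this first-moment property is preserved under the weak convergence $\mu_{\Omega_n}\rightharpoonup\mu$ because the integrand $\lambda$ is bounded and continuous on the compact interval $[0,2]$. Hence $m_1(\mu)=1$ for every $\mu\in\SM(G)$ and every $\mu\in\SM(G')$. Applying Theorem~\ref{t:measure version} to any such pair $\mu,\nu$ gives $d_p^W(\mu,\nu)\leq 2^{1-\frac{1}{p}}$.

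Combining the two displays, for every pair of graphs $G,G'\in\G$ we obtain
\begin{equation*}
d_{p,H}(\SM(G),\SM(G'))\leq 2^{1-\frac{1}{p}},
\end{equation*}
and taking the supremum over all $G,G'$ yields $\diam(\G,d_{p,H})\leq 2^{1-\frac{1}{p}}$, which is the assertion. I would remark that the bound on the right-hand side is uniform, independent of the particular graphs or exhaustions, which is precisely why the supremum-over-pairs estimate on the Hausdorff distance is harmless here.

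No genuinely hard step arises: the argument is a formal consequence of Theorem~\ref{t:measure version} together with the elementary inequality relating the Hausdorff distance to the supremum of pairwise distances. The only point requiring a moment's care---and the one I would state explicitly---is the persistence of the normalization $m_1=1$ under weak limits, which ensures that Theorem~\ref{t:measure version} is applicable to \emph{every} element of the spectral-measure sets rather than merely to the finite-exhaustion approximants.
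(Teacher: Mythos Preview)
Your proof is correct and follows essentially the same approach as the paper: the paper simply states that the theorem is ``a direct application of Theorem~\ref{t:measure version},'' and you have spelled out precisely that application, including the Hausdorff-distance-to-supremum inequality and the preservation of $m_1=1$ under weak limits (the latter already noted in the paper just before Definition~\ref{d:infinite by exhaustion}).
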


\subsection{Spectral measures for random rooted graphs}\label{s:random rooted graphs}
We follow Benjamini-Schramm \cite{Benjamini2001a}, Aldous-Lyons \cite{Aldous2007} and Ab\'ert-Thom-Vir\'ag \cite{ATV}
to define random rooted graphs.

For any $D\geq 1,$ we define a subcollection of $\G,$
$\G_D:=\{(V,E,\theta)\in \G| \deg_x\leq D, \theta_{xy}\leq D \mbox{\
for \ all\ }x,y\in V\}$ where $\deg_x=|\{y\in V| y\sim x\}|$, i.e. the set of weighted graphs with
bounded (unweighted) degree ($\leq D$) and bounded edge weights
($\leq D$). Let $\RG_D$ denote the set of graphs $G$ in $\G_D$ with
a distinguished vertex, called the root of $G.$

For any $x,y\in V$ of $G=(V,E,\theta),$ we denote by $d_C(x,y)$ the distance between $x$ and $y,$ i.e. $d_C(x,y):=\inf\{n|
\mbox{\ there\ exist}\ \{x_i\}_{i=0}^{n}\ s.t.\ x= x_0\sim
x_1\sim\cdots\sim x_{n}= y\},$ and by $B_k(x):=\{z\in V|
d_C(x,z)\leq k\},$ $k\in \N\cup\{0\}$, the ball of radius $k$
centered at $x.$ Let $(G_1,o_1)$ and $(G_2,o_2)$ be two rooted
graphs with distinguished vertices $o_1$ and $o_2$, respectively. We call that $B_k(o_1)$ is isomorphic to $B_k(o_2)$ if there
exists a bijective map $f:B_k(o_1)\to B_k(o_2)$ such that
$f(o_1)=f(o_2)$ and $x\sim y$ for $x,y\in B_k(o_1)$ if and only if
$f(x)\sim f(y).$ For $(G_1,o_1),(G_2,o_2)\in \RG_D$ with
$G_1=(V_1,E_1,\theta_1)$ and $G_2=(V_2,E_2,\theta_2),$ we define the
\emph{rooted distance} between $G_1$ and $G_2$ as $1/K$ where
\begin{align*}
K=\max\{k\in \N|\ \exists \mbox{ an\ isomorphism\ }f:B_k(o_1)\to B_k(o_2)\\
 \text{ such that }\ \sup_{x,y\in B_k(o_1)}|\theta_{1,xy}-\theta_{2,f(x)f(y)}|\leq \frac1k\},
\end{align*}
$\theta_{1,xy},\theta_{2,f(x)f(y)}$ are edge weights of $xy\in E_1$, $f(x)f(y)\in E_2$, respectively.
One can prove that
 $\RG_D$ endowed with the rooted distance is a compact metric space.

By a \emph{random rooted graph} of degree $D$ we mean a Borel
probability distribution on $\RG_D.$ We denote by $\RRG_D$ the
collection of random rooted graphs of degree $D.$ Any finite
weighted graph $G$ gives rise to a random rooted graph by assigning the
root of $G$ uniformly randomly.

For a rooted weighted graph $(G,o)\in \RG_D$ with $G=(V,E,\theta),$
the normalized Laplacian is a bounded self-adjoint operator on
$\ell^2(V,\theta)$ which is independent of $o.$ By spectral theorem,
there is a projection-valued measure, denoted by $P_{\bullet},$ on
$[0,2],$ i.e. $P_A$ is a projection on $\ell^2(V,\theta)$ for any
Borel $A\subset [0,2],$ such that for any continuous function $f\in
C([0,2])$ we have the functional calculus
\begin{equation}\label{e:functional calculus}f(\Delta_{G})=\int_{[0,2]}f(x)dP_x\end{equation} where $P_x=P_{[0,x]}.$ We
define the \emph{spectral measure} of the rooted graph $(G,o)$ as
$$\mu_{G,o}=\frac{1}{\theta_o}\langle P_A\delta_o,\delta_o\rangle,\ \ \forall A\subset
[0,2],$$ where $\langle\cdot,\cdot\rangle$ is the inner product for
$\ell^2(V,\theta).$ One can easily show that $\mu_{G,o}$ is a
probability measure on $[0,2]$. Further calculation by using
\eqref{e:functional calculus} yields $m_1(\mu_{G,o})=1.$ Now we can
define the expected spectral measure for rooted random graphs.
\begin{definition}
Let $G$ be a random rooted graph. We define the \emph{expected
spectral measure} of $G$ as $$\mu_G=E(\mu_{G,o})$$ where the
expectation is taken over the distribution on $\RG_D.$
\end{definition}
Let $G$ be a random rooted graph rising from a finite weighted graph
with uniform distribution on its vertices. A similar calculation as
in Ab\'ert-Thom-Vir\'ag \cite{ATV} shows that
$$\mu_G=\frac{1}{N}\sum_{i=1}^N\delta_{\lambda_i}$$ where
$\{\lambda_i\}_{i=1}^N$ is the spectrum of the finite graph. Hence
the expected spectral measure of random rooted graphs generalizes
the spectral measure of finite graphs. There are other interesting
classes of random rooted graphs such as unimodular and sofic ones, see
e.g. \cite{ATV}.

The set of random rooted graphs of degree $D,$ $\RRG_D$, endowed
with $d_p^W$ Wasserstein distance for expected spectral measures
($d_p$ in short) is a pseudo-metric space. By Theorem \ref{t:measure
version}, one can prove the following theorem.
\begin{thm}\label{T:diameterInfi2}
For $p\in [1,\infty),$
$$\diam (\RRG_D,d_p)\leq 2^{1-\frac{1}{p}}.$$
\end{thm}

\section{Calculation of examples}\label{s:examples}
From now on, we will concentrate on the study of the spectral distance $d_1$.
We calculate this distance for several classes
of graphs in this section. Rather than the exact value of the $d_1$ distance between two graphs, we are more concerned with the asymptotical behavior of the distance between two sequences of graphs which become larger and larger, as the sizes of real networks in practice nowadays are typically huge. All example graphs we consider in this section are unweighted.

\begin{pro}
For two complete graphs $G$ and $G'$ with $N$ and $M$ ($ M>N $) vertices
respectively, we have $$d_1(G, G')=2\frac{M-N}{N(M-1)}.$$
\label{T:complete}
\end{pro}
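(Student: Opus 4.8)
The plan is to compute the two inverse cumulative distribution functions explicitly and then apply Lemma~\ref{l:cumulative dis}, which reduces the Wasserstein distance $d_1$ to the integral $\int_0^1|F_G^{-1}-F_{G'}^{-1}|\,dx$. Recall from Example~\ref{example:distance approach one} that the spectrum of the unweighted complete graph on $N$ vertices is $\sigma(G)=\{0,\frac{N}{N-1},\ldots,\frac{N}{N-1}\}$, with the eigenvalue $0$ appearing once and $\frac{N}{N-1}$ appearing $N-1$ times. The associated spectral measure is $\mu_{\sigma(G)}=\frac1N\delta_0+\frac{N-1}{N}\delta_{N/(N-1)}$, so its inverse cumulative distribution function $F_G^{-1}:[0,1]\to[0,2]$ is the step function that equals $0$ on $[0,\frac1N)$ and equals $\frac{N}{N-1}$ on $[\frac1N,1]$. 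Likewise $F_{G'}^{-1}$ equals $0$ on $[0,\frac1M)$ and $\frac{M}{M-1}$ on $[\frac1M,1]$.

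First I would record that, since $M>N$, we have $\frac1M<\frac1N$, so the integral $\int_0^1|F_G^{-1}-F_{G'}^{-1}|\,dx$ splits naturally into the three intervals $[0,\frac1M)$, $[\frac1M,\frac1N)$, and $[\frac1N,1]$. On $[0,\frac1M)$ both functions vanish, contributing nothing. On $[\frac1M,\frac1N)$, of length $\frac1N-\frac1M$, the function $F_G^{-1}$ is still $0$ while $F_{G'}^{-1}=\frac{M}{M-1}$, so the integrand equals $\frac{M}{M-1}$. On $[\frac1N,1]$, of length $1-\frac1N$, the integrand equals the gap $\frac{M}{M-1}-\frac{N}{N-1}$ between the two nonzero eigenvalue values.

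The remaining step is routine algebra: I would combine the two nonzero contributions,
\begin{equation*}
d_1(G,G')=\left(\frac1N-\frac1M\right)\frac{M}{M-1}+\left(1-\frac1N\right)\left(\frac{M}{M-1}-\frac{N}{N-1}\right),
\end{equation*}
and simplify. Using $\frac{M}{M-1}-\frac{N}{N-1}=\frac{M-N}{(M-1)(N-1)}$ and collecting terms over the common denominator $N(M-1)$, everything should collapse to $\frac{2(M-N)}{N(M-1)}$, which is the claimed value. The only point requiring a little care is verifying that the gap $\frac{M}{M-1}-\frac{N}{N-1}$ is nonnegative, i.e.\ that $\frac{M}{M-1}\le\frac{N}{N-1}$ for $M>N$; this holds because $t\mapsto\frac{t}{t-1}$ is decreasing, so the absolute value signs may be dropped on $[\frac1N,1]$. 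I do not anticipate a genuine obstacle here—the monotonicity of the eigenvalue values as functions of the size is what guarantees the clean ordering of the step functions and hence the straightforward splitting of the integral.
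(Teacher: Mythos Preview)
Your approach is exactly the paper's: compute the two step functions $F_G^{-1}$ and $F_{G'}^{-1}$ and integrate the absolute difference over the three obvious subintervals. The paper writes the same two-term expression you do.

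There is, however, a sign slip that makes your displayed expression evaluate to $0$ rather than to the claimed value. On $[\tfrac1N,1]$ you have $F_G^{-1}=\tfrac{N}{N-1}$ and $F_{G'}^{-1}=\tfrac{M}{M-1}$; since $t\mapsto\tfrac{t}{t-1}$ is decreasing and $M>N$, the absolute difference there is $\tfrac{N}{N-1}-\tfrac{M}{M-1}$, not $\tfrac{M}{M-1}-\tfrac{N}{N-1}$ as you wrote. (Your own final paragraph says ``the gap $\tfrac{M}{M-1}-\tfrac{N}{N-1}$ is nonnegative, i.e.\ $\tfrac{M}{M-1}\le\tfrac{N}{N-1}$,'' which is self-contradictory.) Also, $\tfrac{M}{M-1}-\tfrac{N}{N-1}=\tfrac{N-M}{(M-1)(N-1)}$, not $\tfrac{M-N}{(M-1)(N-1)}$. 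With the correct sign the two contributions are each equal to $\tfrac{M-N}{N(M-1)}$ and sum to the stated $\tfrac{2(M-N)}{N(M-1)}$; as written, they cancel. Fix the order on that interval and the proof goes through unchanged.
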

\begin{proof}
Recall the spectrum (\ref{e:spectral of complete graph}) of a complete graph.
We then calculate
the distance (i.e. the area of the grey region shown in Fig. \ref{F:4.1}),
$$d_{1}(G,G')=\frac{M}{M-1}\left(\frac{1}{N}-\frac{1}{M}\right)+\left(\frac{N}{N-1}-\frac{M}{M-1}\right)\left(1-\frac{1}{N}\right)=2\frac{M-N}{N(M-1)}.$$

\begin{figure}[h]
\centering
\includegraphics[scale=0.5]{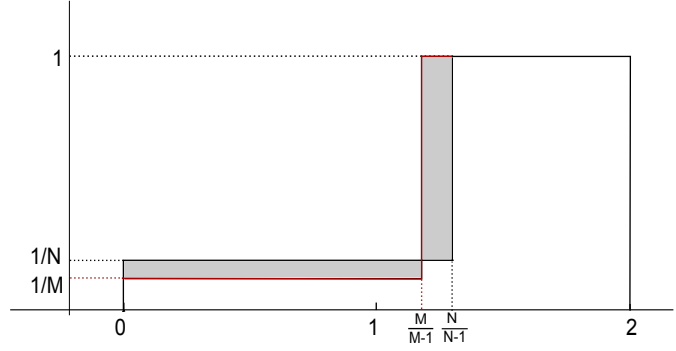}
\caption{\label{F:4.1} Two complete graphs of size $N$ and $M$}
\end{figure}
\end{proof}

\begin{remark}
When the size difference $M-N$ of two complete graphs is a fixed constant $C$, we have
%For two complete graphs with $N$ and $N+C$ ($C$ is a fixed
%constant) vertices respectively, we have
$$d_1(G,
G')=\mathcal{O}(1/N^2)\text{ as }N\rightarrow\infty.$$
\label{C:complete}
\end{remark}

%\begin{proof}
%It follows directly from Theorem \ref{T:complete} with $M=N+C$,
%$d_{1}(G,G')=2\frac{N+C-N}{N(N+C-1)}=\mathcal{O}(1/N^2).$
%\end{proof}

\begin{pro}
For two connected complete bipartite graphs
$G$ and $G'$ of size $N$ and $M$ ($ M>N $) respectively,
we have
$$d_1(G, G')=2\frac{M-N}{NM}.$$
\label{T:bi_complete}
\end{pro}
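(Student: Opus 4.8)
The plan is to compute $d_1(G,G')$ directly by using Lemma~\ref{l:cumulative dis}, which reduces the Wasserstein distance to the $L^1$ distance between inverse cumulative distribution functions. The first step is to recall the spectrum of a connected complete bipartite graph $K_{a,b}$. For such a graph the normalized Laplacian spectrum is $\{0, \underbrace{1,\ldots,1}_{a+b-2}, 2\}$: the eigenvalue $0$ comes from the constant function, the eigenvalue $2$ from the alternating function taking opposite signs on the two sides (this works precisely because the graph is bipartite), and the remaining eigenvalues are all $1$ because $\Delta_{K_{a,b}}$ acts as the identity on functions that are constant on each part and orthogonal to these two extremes. Thus for $G$ of size $N=a+b$ we have $\sigma(G)=\{0,1^{(N-2)},2\}$ and similarly $\sigma(G')=\{0,1^{(M-2)},2\}$.

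First I would write down the inverse cumulative distribution functions $F_G^{-1}$ and $F_{G'}^{-1}$ explicitly. For $G$ the measure is $\frac1N(\delta_0+(N-2)\delta_1+\delta_2)$, so $F_G^{-1}(x)=0$ on $[0,\frac1N)$, equals $1$ on $[\frac1N,\frac{N-1}{N})$, and equals $2$ on $[\frac{N-1}{N},1]$; the analogous formula holds for $G'$ with $M$ in place of $N$. Since $M>N$, the jump points satisfy $\frac1M<\frac1N$ and $\frac{M-1}{M}>\frac{N-1}{N}$, so the two step functions differ only on two small intervals near the left and right ends.

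Next I would integrate $|F_G^{-1}-F_{G'}^{-1}|$ over $[0,1]$. On $[\frac1M,\frac1N)$ the function $F_G^{-1}$ is still $0$ while $F_{G'}^{-1}$ has already jumped to $1$, contributing $1\cdot(\frac1N-\frac1M)$. By the symmetric situation at the right end, on $[\frac{N-1}{N},\frac{M-1}{M})$ we have $F_G^{-1}=2$ but $F_{G'}^{-1}=1$, contributing another $1\cdot(\frac{M-1}{M}-\frac{N-1}{N})=\frac1N-\frac1M$. Everywhere else the two functions agree (both $0$, both $1$, or both $2$), so
\begin{equation*}
d_1(G,G')=\int_0^1|F_G^{-1}-F_{G'}^{-1}|(x)\,dx=2\left(\frac1N-\frac1M\right)=2\frac{M-N}{NM},
\end{equation*}
which is the claimed value. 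This matches the interpretation of $d_1$ as the area between the two inverse-cumulative step functions, just as in the complete-graph case of Proposition~\ref{T:complete}.

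I expect no serious obstacle here; the only point requiring care is correctly identifying the spectrum of the complete bipartite graph, in particular verifying that the eigenvalue $2$ appears with multiplicity exactly one (it comes solely from the bipartite alternating eigenvector) and that all the bulk eigenvalues collapse to $1$. Once the spectrum is in hand, the computation is a routine bookkeeping of the two thin rectangular regions where the step functions disagree, and the symmetry between the left and right ends makes the factor of $2$ transparent.
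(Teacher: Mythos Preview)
Your proof is correct and follows essentially the same approach as the paper: identify the spectrum $\{0,1^{(N-2)},2\}$ of the complete bipartite graph, then compute $d_1$ as the $L^1$ distance between the inverse cumulative distribution functions via Lemma~\ref{l:cumulative dis}, which amounts to the area of the two thin rectangles where the step functions differ. The paper presents this computation more tersely by pointing to a figure, but the content is identical.
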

\begin{proof}
The spectrum of a complete bipartite graph $G$ with $N$ vertices
is
\begin{equation*}
 \sigma(G)=\{0, \underbrace{1, \ldots, 1}_{N-2}, 2\}.
\end{equation*}
Then the distance is
(the area of the grey region shown in Fig. \ref{F:4.2})
$$d_{1}(G,G')=\left(\frac{1}{N}-\frac{1}{M}\right)+\left(\frac{M-1}{M}-\frac{N-1}{N-1}\right)=2\frac{M-N}{NM}.$$
\begin{figure}[h]
\centering
\includegraphics[scale=0.50]{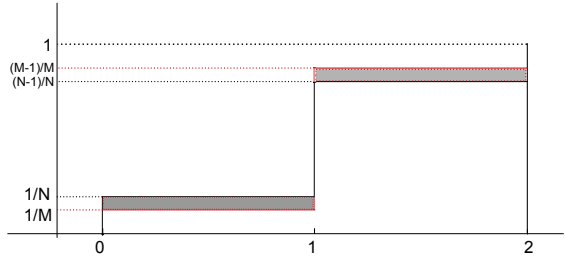}
\caption{\label{F:4.2} Two complete bipartite graphs of size $N$ and $M$}
\end{figure}
\end{proof}

\begin{remark}
If the size difference $M-N$ of
two complete bipartite graphs is a fixed constant $C$, we again observe the behavior
%
%$G$ and $G'$ with $N$ and
%$N+C$ ($C$ is a fixed constant) vertices respectively, we have
$$d_1(G,
G')=\mathcal{O}(1/N^2)\text{ as }N\rightarrow\infty.$$
\label{C:bi_complete}
\end{remark}
%
%\begin{proof}
%This follows directly from Theorem \ref{T:bi_complete} with $M=N+C$,
%$d_{1}(G,G')=2\frac{N+C-N}{N(N+C)}=\mathcal{O}(1/N^2).$
%\end{proof}

\begin{pro}
For two cubes $G$ and $G'$ of size $2^N$ and $2^{N+1}$ respectively, we have
$$d_1(G, G')=\frac{1}{N+1}.$$ \label{T:path}
\end{pro}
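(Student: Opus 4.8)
The plan is to identify the two spectral measures explicitly, obtain the upper bound from a concrete coupling, and then obtain the matching lower bound from a dual (Kantorovich) certificate adapted to that coupling.

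\emph{Identifying the measures.} The cube of size $2^N$ is the $N$-dimensional hypercube $Q_N$, which is $N$-regular with adjacency eigenvalues $N-2k$ of multiplicity $\binom{N}{k}$, $k=0,\dots,N$. Since $\Delta_{Q_N}=I-\tfrac1N A$, the normalized-Laplacian spectrum is $\{\tfrac{2k}{N}\}_{k=0}^{N}$ with multiplicity $\binom{N}{k}$, so
\[
\mu_{\sigma(Q_N)}=\frac{1}{2^N}\sum_{k=0}^N\binom{N}{k}\,\delta_{2k/N},
\]
i.e. the law of $\tfrac2N B_N$ where $B_N$ is $\mathrm{Binomial}(N,\tfrac12)$ (one checks $m_1=1$, as the trace condition requires). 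By Definition \ref{d:spectal distance} and Lemma \ref{l:cumulative dis}, computing $d_1(Q_N,Q_{N+1})$ amounts to computing $d_1^W(\mu_{\sigma(Q_N)},\mu_{\sigma(Q_{N+1})})$.

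\emph{Upper bound.} First I would exploit the additive structure $B_{N+1}\stackrel{d}{=}B_N+\beta$ with $\beta\sim\mathrm{Bernoulli}(\tfrac12)$ independent of $B_N$. Coupling $Y_N=\tfrac2N B_N$ with $Y_{N+1}=\tfrac{2}{N+1}(B_N+\beta)$ gives a transport plan $\pi$ with the correct marginals, and
\[
|Y_N-Y_{N+1}|=\frac{2}{N+1}\Bigl|\frac{B_N}{N}-\beta\Bigr|.
\]
Since $\tfrac{B_N}{N}\in[0,1]$ and $\beta\in\{0,1\}$, taking expectations (the terms $E[B_N/N]$ cancel) yields transport cost $\tfrac{2}{N+1}\cdot\tfrac12=\tfrac{1}{N+1}$, so $d_1(Q_N,Q_{N+1})\le\tfrac{1}{N+1}$.

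\emph{Lower bound and main obstacle.} To see this coupling is optimal I would build a matching $1$-Lipschitz potential. The elementary inequality $\tfrac{2k}{N}\le\tfrac{2(k+1)}{N+1}$ (equivalent to $k\le N$) shows the interleaved breakpoints
\[
0<\tfrac{2}{N+1}<\tfrac2N<\tfrac{4}{N+1}<\tfrac4N<\cdots<\tfrac{2N}{N+1}<2
\]
are strictly increasing, so a continuous sawtooth $\varphi$ with slope $+1$ on each $[\tfrac{2k}{N+1},\tfrac{2k}{N}]$ and slope $-1$ on each $[\tfrac{2k}{N},\tfrac{2(k+1)}{N+1}]$ is well defined and $1$-Lipschitz. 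By design $\varphi(x)-\varphi(y)=|x-y|$ for every $(x,y)$ in the support of $\pi$ — the $\beta=0$ pairs $(\tfrac{2k}{N},\tfrac{2k}{N+1})$ lie on a rising branch and the $\beta=1$ pairs $(\tfrac{2k}{N},\tfrac{2(k+1)}{N+1})$ on a falling branch — whence $\int\varphi\,d\mu_{\sigma(Q_N)}-\int\varphi\,d\mu_{\sigma(Q_{N+1})}=\iint|x-y|\,d\pi=\tfrac{1}{N+1}$. Kantorovich--Rubinstein duality (see \cite{Villani09}) then gives $d_1(Q_N,Q_{N+1})\ge\tfrac{1}{N+1}$, and with the upper bound this proves equality. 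The delicate step is precisely this lower bound: one must verify that the sawtooth is genuinely well defined (this is exactly where the ordering $\tfrac{2k}{N}\le\tfrac{2(k+1)}{N+1}$ is used) and that the slackness identity holds on both branches of the support. Equivalently one could bypass duality and evaluate $\int_0^2|F_{Q_N}-F_{Q_{N+1}}|\,dt$ directly, but then the same sign bookkeeping over the interleaved binomial breakpoints is the crux.
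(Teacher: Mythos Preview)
Your argument is correct and genuinely different from the paper's. The paper works directly with the formula $d_1=\int_0^1|F_G^{-1}-F_{G'}^{-1}|$: it verifies the interleaving $\tfrac{2(k-1)}{N}<\tfrac{2k}{N+1}<\tfrac{2k}{N}$ and the ordering of the partial binomial sums, then evaluates the resulting area by repeated use of Pascal's rule and the identity $\sum_k k\binom{N}{k}=N2^{N-1}$. Your route is probabilistic/transport-theoretic: you recognise $\mu_{\sigma(Q_N)}$ as the law of $\tfrac{2}{N}B_N$, exploit the additive coupling $B_{N+1}\overset{d}{=}B_N+\beta$ to get the upper bound in one line, and certify optimality via a sawtooth Kantorovich potential supported by the same interleaving of breakpoints the paper uses. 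What this buys you is that all the binomial-sum manipulations disappear; what the paper's approach buys is that it stays entirely within the inverse-CDF framework already set up in Lemma~\ref{l:cumulative dis}, without invoking Kantorovich--Rubinstein duality. Both are short; yours is the more conceptual of the two, and its ingredients (the coupling and the interleaving $\tfrac{2k}{N}<\tfrac{2(k+1)}{N+1}$ for $k<N$) are exactly the facts the paper also relies on, just organised differently.
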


\begin{proof}
The spectrum of the cube $G$ with $2^N$ vertices is
$$\left\{\frac{2i}{N}\text{ with multiplicity } \binom{N}{i}, i=0,\ldots,N\right\}.$$
%Let $G'$ be the
%cube of size $2^{N+1}$.

Firstly, observe $\frac{2i}{N}=\frac{2j}{N+1}$ when $i=j=0$ or $i=N$, $j=N+1$. And for $j=i$, we
have
$$\frac{2(i-1)}{N}<\frac{2j}{N+1}<\frac{2i}{N},  \text{ for }
1\leq i\le N.$$

Secondly, by the recursive formula $\binom{N+1}{k}=\binom{N}{k-1}+\binom{N}{k}$, for $1\leq k\leq N$, we derive
$$\frac{1}{2^{N+1}}\sum\limits_{i=0}^{k}\binom{N+1}{i}<\frac{1}{2^N}\sum\limits_{i=0}^{k}\binom{N}{i}<\frac{1}{2^{N+1}}\sum\limits_{i=0}^{k+1}\binom{N+1}{i}, \text{ for } 0\leq k\leq N-1.$$
Therefore the distance between $G$ and
$G'$ equals the area of the grey region depicted in Fig. \ref{F:4.3}.
%Two density curves reach the larger values alternately, and the sum
%of cross square areas is their spectral distance.
\begin{figure}[h]
\centering
\includegraphics[scale=0.5]{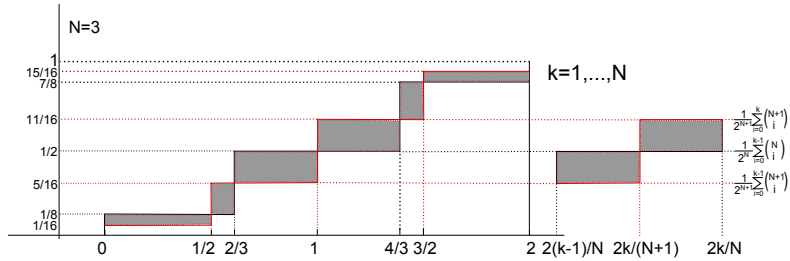}
\caption{An example of two neighboring cubes with $N=3$ and $N+1=4$.}
\label{F:4.3}
\end{figure}
Again by the recursive formula of binomial numbers, we calculate,
\begin{eqnarray*}
d_{1}(G,G')&=&\sum\limits_{k=1}^{N}\Bigg\{\left(\frac{2k}{N+1}-\frac{2(k-1)}{N}\right)\left[\frac{1}{2^N}\sum\limits_{i=0}^{k-1}\binom{N}{i}-\frac{1}{2^{N+1}}\sum\limits_{i=0}^{k-1}\binom{N+1}{i}\right]\\
& &+\left(\frac{2k}{N}-\frac{2k}{N+1}\right)\left[\frac{1}{2^{N+1}}\sum\limits_{i=0}^{k}\binom{N+1}{i}-\frac{1}{2^N}\sum\limits_{i=0}^{k-1}\binom{N}{i}\right]\Bigg\}\\  &=&\frac{1}{2^N N(N+1)}\Bigg\{\sum\limits_{k=1}^{N} (N-k+1)\left[2\sum\limits_{i=0}^{k-1}\binom{N}{i}-\sum\limits_{i=0}^{k-1}\binom{N+1}{i}\right]\\
& & +k\left[\sum\limits_{i=0}^{k}\binom{N+1}{i}-2\sum\limits_{i=0}^{k-1}\binom{N}{i}\right]\Bigg\}\\
%&=&\frac{1}{2^N N(N+1)}\sum\limits_{k=1}^{N} \Bigg\{ (N-k+1)\left[\sum\limits_{i=0}^{k-1}\binom{N}{i}-\sum\limits_{i=1}^{k-1}\binom{N}{i-1}\right]\\
%& &+k\left[\sum\limits_{i=0}^{k}\binom{N}{i}+\sum\limits_{i=1}^{k}\binom{N}{i-1}-2\sum\limits_{i=0}^{k-1}\binom{N}{i}\right]\Bigg\}\\
&=&\frac{1}{2^N N(N+1)}\sum\limits_{k=1}^{N} \left[(N-k+1)\binom{N}{k-1}+k\binom{N}{k}\right]\\
&=&\frac{2}{2^N N(N+1)}\sum\limits_{k=1}^{N}k\binom{N}{k}=\frac{2}{2^N N (N+1)}\cdot N \cdot 2^{N-1}=\frac{1}{N+1}.
\end{eqnarray*}
\end{proof}

\begin{remark}
The distance between two neighboring cubes ($N$-cube and $(N+1)$-cube)
is $\mathcal{O}(1/N)$ as $N$ tends to infinity. Recall a crucial difference of this example from previous ones is that the size difference, $2^N$, is not uniformly bounded as $N\rightarrow\infty$.
%
%does not converge to zero with $\mathcal{O}(1/N^2)$, since the
%difference in size between these two cubes is $2^{N}$, which also
%goes to infinity with $N$ goes to infinity.
\end{remark}

\begin{pro}\label{p:paths}
For two paths $G$ and $G'$
of size $N$ and $N+1$ respectively, we have
$$d_1(G, G')=\frac{1}{N(N+1)}\left(\cot^2{\frac{\pi}{2N}}-\cot^2{\frac{\pi}{2(N-1)}}+1\right)$$
\end{pro}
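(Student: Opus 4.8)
The plan is to compute $d_1(G,G')$ explicitly using the known spectrum of a path graph and the inverse-cumulative-distribution-function formula from Lemma~\ref{l:cumulative dis}. The normalized Laplacian of the path $P_N$ on $N$ vertices has eigenvalues
\[
\lambda_k = 1-\cos\frac{(k-1)\pi}{N-1},\qquad k=1,\dots,N,
\]
(equivalently $\lambda_k = 2\sin^2\frac{(k-1)\pi}{2(N-1)}$), which are already listed in nondecreasing order and automatically satisfy $\sum_k\lambda_k = N$ by the trace condition. First I would record these spectra for both $P_N$ and $P_{N+1}$ explicitly. Since the two graphs have \emph{different} sizes ($N$ and $N+1$), the shortcut $d_1=\frac1N\|\lambda_G-\lambda_{G'}\|_{\ell^1}$ does not apply directly; instead I would work with the inverse cumulative distribution functions $F_G^{-1},F_{G'}^{-1}$, which are step functions with jumps at multiples of $1/N$ and $1/(N+1)$ respectively, and compute
\[
d_1(G,G') = \int_0^1\bigl|F_G^{-1}(x)-F_{G'}^{-1}(x)\bigr|\,dx,
\]
i.e.\ the area between the two staircases, exactly as was done graphically in Propositions~\ref{T:complete}, \ref{T:bi_complete} and \ref{T:path}.

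The key structural step is to establish an interlacing of the two staircases so that the absolute value can be removed and the integral becomes a genuine area of a sequence of rectangles. Concretely, I would verify that the eigenvalues of $P_N$ and $P_{N+1}$ interlace in the sense that $F_G^{-1}$ and $F_{G'}^{-1}$ alternate, so that on each subinterval of the common refinement of $\{k/N\}$ and $\{j/(N+1)\}$ the sign of $F_G^{-1}-F_{G'}^{-1}$ is constant and tracked by a single pair of indices. This reduces the computation to a finite sum over the partition points, where each term is a width (a difference of the form $\frac{k}{N}-\frac{k-1}{N+1}$ or similar) times a height (a difference of two cosines).

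The main obstacle, and the real content of the proof, is the trigonometric summation: after assembling the area as a finite sum one must evaluate expressions of the form $\sum_{k}\cos\frac{(k-1)\pi}{N-1}$ weighted by linear factors in $k$, and collapse them into the closed form involving $\cot^2\frac{\pi}{2N}$ and $\cot^2\frac{\pi}{2(N-1)}$. Here I would rely on the standard closed forms for sums of cosines (Dirichlet-kernel-type identities) and for $\sum_{k=1}^{N-1}\cot^2\frac{k\pi}{2N}$, which is known to equal $\frac{(N-1)(N-2)}{3}$ or related cotangent-square sum identities, together with Abel summation to handle the linear weights. I expect that after these substitutions the massive telescoping collapses and the leftover boundary terms produce precisely the $\cot^2\frac{\pi}{2N}-\cot^2\frac{\pi}{2(N-1)}+1$ combination, scaled by $\frac{1}{N(N+1)}$. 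The delicate points will be keeping the index bookkeeping consistent between the two paths of different lengths and correctly matching each rectangle's width to the appropriate eigenvalue gap; once the interlacing is pinned down these are routine but lengthy manipulations.
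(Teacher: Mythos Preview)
Your overall strategy matches the paper's proof: compute the spectra of $P_N$ and $P_{N+1}$, establish the interlacing $1-\cos\frac{i\pi}{N-1}<1-\cos\frac{(i+1)\pi}{N}<1-\cos\frac{(i+1)\pi}{N-1}$ together with the trivial $\frac{k}{N+1}<\frac{k}{N}<\frac{k+1}{N+1}$, use this to remove the absolute value and write $d_1$ as a finite sum of rectangle areas, and then collapse the resulting trigonometric sum. The paper arrives at exactly the intermediate expression $\frac{2}{N(N+1)}\sum_{k=1}^{N-1}k\bigl(\cos\frac{(k-1)\pi}{N-1}-\cos\frac{k\pi}{N}\bigr)$, so up to that point you and the paper coincide.

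The one point where you drift is the plan for evaluating the weighted cosine sum. The identity $\sum_{k}\cot^2\frac{k\pi}{2N}=\frac{(N-1)(N-2)}{3}$ you invoke is a red herring here: no sum of cotangent squares arises in this computation, and the $\cot^2\frac{\pi}{2N}$ and $\cot^2\frac{\pi}{2(N-1)}$ in the final answer are single boundary values, not residues of such a sum. What is actually needed, and what the paper uses, is the Lagrange trigonometric identities for $\sum_{k}\cos kx$ and $\sum_{k}\sin kx$ together with their \emph{derivatives}, which give closed forms for $\sum_{k} k\cos kx$ and $\sum_{k} k\sin kx$. (Your Abel-summation idea would reach the same endpoint, since Abel summation of $\sum k\cos kx$ against the Dirichlet kernel is equivalent to differentiating.) Once those closed forms are inserted for the two separate sums $\sum_{k}k\cos\frac{(k-1)\pi}{N-1}$ and $\sum_{k}k\cos\frac{k\pi}{N}$, the boundary evaluations at $x=\frac{\pi}{N-1}$ and $x=\frac{\pi}{N}$ produce the two $\cot^2$ terms directly. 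So drop the cotangent-square sum from your toolkit and compute via differentiated Dirichlet kernels; the rest of your outline is correct.
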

\begin{proof}
%The proof of this theorem is similar as Them. \ref{T:path}.
The spectrum of the path $G$ with with $N$ vertices is
$$\left\{1-\cos{\frac{\pi i}{N-1}}, i=0,1,\ldots,N-1\right\}.$$
Since $1-\cos\frac{i\pi}{N-1}<1-\cos\frac{(i+1)\pi}{N}<1-\cos\frac{(i+1)\pi}{N-1}$ for $i=0,\ldots,N-2$, and every eigenvalue of a path has multiplicity one,
the situation is similar to Proposition \ref{T:path}, as shown
%the spectral distance between $G$ and $G'$ is the sum of cross area
%of two curves reaching larger values alternately, which is shown as
%the gray area
in Fig. \ref{F:4.4}.
\begin{figure}[h]
\centering
\includegraphics[scale=0.5]{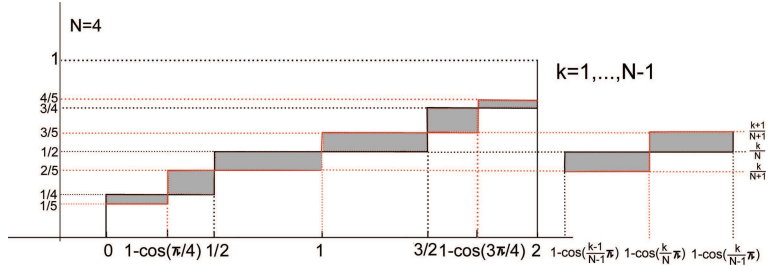}
\caption{An example of two neighboring paths with $N=4$ and $N+1=5$.}
\label{F:4.4}
\end{figure}
\begin{eqnarray*}
d_{1}(G,G')&=&\sum\limits_{k=1}^{N-1}\bigg\{\left(\cos{\frac{k-1}{N-1}\pi}-\cos{\frac{k}{N}\pi}\right)\left(\frac{k}{N}-\frac{k}{N+1}\right)\\
& &+\left(\cos{\frac{k}{N}-\cos{\frac{k}{N-1}\pi}\pi}\right)\left(\frac{k+1}{N+1}-\frac{k}{N}\right)\bigg\}\\
%&=&\frac{1}{N(N+1)}\sum\limits_{k=1}^{N-1}\bigg\{ k\left(\cos{\frac{k-1}{N-1}\pi}-\cos{\frac{k}{N}\pi}\right)+(N-k)\left(\cos{\frac{k}{N}\pi}-\cos{\frac{k}{N-1}\pi}\right) \bigg\}\\
&=&\frac{2}{N(N+1)}\sum\limits_{k=1}^{N-1}k\left(\cos{\frac{k-1}{N-1}\pi}-\cos{\frac{k}{N}\pi}\right)\hspace{.294\textwidth}\\
&=&\frac{1}{N(N+1)}\left(\cot^2{\frac{\pi}{2N}}-\cot^2{\frac{\pi}{2(N-1)}}+1\right).
\end{eqnarray*}
For the last equality above we use the Lagrange's trigonometric identities
\begin{equation*}\sum_{k=1}^N\sin kx=\frac{\cos\frac{1}{2}x-\cos(n+\frac{1}{2})x}{2\sin\frac{1}{2}x},\,\,
\sum_{k=1}^N\cos kx=\frac{\sin(n+\frac{1}{2})x-\sin\frac{1}{2}x}{2\sin\frac{1}{2}x},
\end{equation*}
and their derivatives.
\end{proof}

\begin{remark}
 By a Taylor expansion argument, we observe that
$$\cot^2{\frac{\pi}{2N}}-\cot^2{\frac{\pi}{2(N-1)}}=\mathcal{O}(N), \,\,\text{ as }N\rightarrow +\infty.$$
Therefore in this example, we have $d_1(G, G')=\mathcal{O}(1/N)$ as $N$ tends to infinity.
\end{remark}

We can calculate the example of cycles similarly.
\begin{pro}\label{p:cycles}
For two cycles
$G$ and $G'$ of size $N$ and $N+1$ respectively, we have
\begin{equation*}
d_1(G, G')=\left\{
             \begin{array}{ll}
               \frac{1}{N}+\frac{1}{N(N+1)}\left(\frac{1}{1-\cos(\frac{\pi}{N+1})}-\frac{4}{1-\cos(\frac{2\pi}{N})}\right), & \hbox{if $N$ is even;} \\
               \frac{1}{N+1}-\frac{1}{N(N+1)}\left(\frac{1}{1-\cos(\frac{\pi}{N})}-\frac{4}{1-\cos(\frac{2\pi}{N+1})}\right), & \hbox{if $N$ is odd.}
             \end{array}
           \right.
\end{equation*}
%
% is equal to
%$$, when $N$ is even, and
%$\frac{1}{N}+\frac{1}{N(N+1)}(\frac{2}{1-\cos(\frac{2\pi}{N+1})}-\frac{1}{2(1-\cos(\frac{\pi}{N}))})$,
%when $N$ is odd and $N>1$.
\end{pro}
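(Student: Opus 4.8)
The plan is to follow the same route as the path case (Proposition \ref{p:paths}) and the cube case (Proposition \ref{T:path}): write the two spectra explicitly, determine their interlacing so that $d_1$ becomes the area of a grey staircase region via Lemma \ref{l:cumulative dis}, and then evaluate the resulting trigonometric sum with Lagrange's identities and their derivatives. First I would record the spectrum. The cycle $C_N$ is $2$-regular, so its normalized Laplacian is $I-\frac{1}{2}A$; from the adjacency eigenvalues $2\cos\frac{2\pi k}{N}$ one gets
$$\sigma(C_N)=\left\{1-\cos\frac{2\pi k}{N}\ :\ k=0,1,\ldots,N-1\right\}.$$
The feature absent in the path case is the multiplicity structure: the eigenvalue $0$ (at $k=0$) is simple, the nonzero values pair up via $k\leftrightarrow N-k$ with multiplicity $2$, and when $N$ is even the top value $2$ (at $k=N/2$) is again simple. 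I would record the analogous data for $C_{N+1}$, noting that the parity of $N+1$ is opposite to that of $N$, so exactly one of the two cycles carries a simple eigenvalue equal to $2$.

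Next I would pin down the interlacing of $\sigma(C_N)$ and $\sigma(C_{N+1})$ needed to read off the grey region. Ordering the distinct eigenvalues increasingly, one verifies inequalities of the type $1-\cos\frac{2\pi (k-1)}{N}<1-\cos\frac{2\pi k}{N+1}<1-\cos\frac{2\pi k}{N}$, exactly as for paths, so that the two cumulative step functions cross in a controlled staircase. Here the bookkeeping genuinely bifurcates according to the parity of $N$: the position of the simple top value $2$ differs between the even and odd cycle, and this changes which rectangles enter the area, which is the origin of the two cases in the statement. In each parity case I would express $d_1(C_N,C_{N+1})$ as a finite sum of rectangle areas, each equal to a gap between consecutive eigenvalues times a difference of cumulative weights of the form $\frac{1}{N}\cdot(\mathrm{count})-\frac{1}{N+1}\cdot(\mathrm{count})$, taking care that the paired eigenvalues produce steps of width $\frac{2}{N}$ rather than $\frac{1}{N}$.

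Finally I would evaluate the sums. After collecting terms the expression reduces to weighted cosine sums of the shape $\sum_{k} k\left(\cos\frac{(k-1)\pi}{N}-\cos\frac{k\pi}{N+1}\right)$, which are handled by the Lagrange trigonometric identities used in Proposition \ref{p:paths} together with the sums obtained by differentiating them in the angle (to produce the factors of $k$); the closed forms then collapse to the stated $\cot^2$ and $\frac{1}{1-\cos(\cdot)}$ expressions. I expect the main obstacle to be the multiplicity-and-parity bookkeeping in the middle step rather than the final evaluation: because almost every eigenvalue has multiplicity $2$ while $0$ and (in one of the two cycles) $2$ are simple, the two staircases have steps of mismatched widths whose overlaps must be tracked separately in the even and odd cases, and it is this combinatorial alignment that forces the dichotomy in the formula.
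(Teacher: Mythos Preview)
Your plan is correct and matches the paper's approach: the paper does not give a proof of this proposition at all, simply stating ``We can calculate the example of cycles similarly'' after the path computation in Proposition~\ref{p:paths}. You have correctly identified the spectrum of $C_N$, the multiplicity-two pairing with the parity-dependent simple eigenvalue at $2$, the resulting even/odd bifurcation in the staircase bookkeeping, and the evaluation via Lagrange's identities and their derivatives---precisely the ``similar'' computation the paper leaves to the reader.
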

\begin{remark}
 For $N$- and $(N+1)$-cycles, we also have $d_1(G, G')=\mathcal{O}(1/N)$ as $N$ tends to infinity.
\end{remark}

\section{Distance between large graphs}\label{s:Interlacing}
In this section we explore the behaviors of the spectral distance $d_1$ between large graphs in general. We require two large graphs are different from each other only by finite steps of operations which will be made clear in Remark \ref{graph
operations}. The main tool we employ is the so-called interlacing inequalities,
%We explore in this section the spectral distance between two
%connected finite weighted graphs $G$ and $G'$, one of which can be
%obtained from the other one by performing some standard operations.
%We will specify the operations in the following Remark \ref{graph
which describe the changes of the spectrum when we perform some operations on the underlying graph. Such kind of results for normalized Laplacian of a graph have been studied in \cite{chen2004interlacing, li2006short, butler2007interlacing, horak2013interlacing, AtayTuncel}. In fact, we can observe the interlacing phenomena of eigenvalues for paths and cycles in Proposition \ref{p:paths}  and \ref{p:cycles}.

Let the cardinality of vertices of $G$ and $G'$ be $N$ and $N-j$ respectively, where $j\in\mathbb{Z}$ can be either negative or positive.
Assume
$$0\leq\lambda_1\leq\lambda_2\leq\cdots\leq\lambda_N\text{ and }0\leq\lambda'_1\leq\lambda'_2\leq\cdots\leq\lambda'_{N-j}$$ are the spectra
of the corresponding normalized Laplacian $\Delta_G$ and $\Delta_{G'}$. Then interlacing inequalities have the following general form.
\begin{equation}\label{e:interlacing}
\lambda_{i-k_1}\leq\lambda'_i\leq\lambda_{i+k_2},\text{  for each }i=1,2,\ldots, N-j,
\end{equation}
with the notation that $\lambda_i=0$ for $i\leq 0$ and $\lambda_i=2$ for $i> N$, and $k_1, k_2$ are constants independent of the index $i$.

\begin{remark}\label{graph operations}
$G'$ can be obtained from $G$ by performing the following operations.
\begin{itemize}
  \item $G'$ is the proper difference of $G$ and one of its subgraph $L$. We say $L$ is a subgraph of $G$ if the weights $\theta_{L,uv}\leq \theta_{G,uv}$ for all $u, v$. And the proper difference of $G$ and $L$ is a weighted graph with weights $\theta_G-\theta_L$. In this case, $$k_1=\text{number of vertices in } L-\text{number of connected components of } L$$ and $$k_2=\text{number of vertices in } L$$ (Horak-Jost \cite[Corollary~2.11]{horak2013interlacing}, see also Butler \cite{butler2007interlacing}). This includes the operation of deleting an edge (see Chen et al \cite{chen2004interlacing} for the result for this particular operation). Symmetrically, this also covers the operation of adding a graph, see Bulter \cite{butler2007interlacing} for particular results and Atay-Tun\c{c}el \cite{AtayTuncel} for vertex replication.
  \item $G'$ is the image of an edge-preserving map $\varphi: G\rightarrow G'$. By an edge-preserving map here we mean an onto map from the vertices of $G$ to vertices of $G'$, such that $$\theta_{H,xy}=\sum_{\substack{u\in \varphi^{-1}(x)\\v\in\varphi^{-1}(y)}}\theta_{G,uv}$$ for all vertices $x,y$ of $G'$, and the degree of vertices are defined according to the edge weights as usual in both graphs. Notice that for our purpose, we do not allow $\varphi$ maps two neighboring vertices in $G$ to the same vertex in $G'$ in order to avoid self-loops. In this case, $$k_1=0\text{ and }k_2=j.$$ (Horak-Jost \cite[Theorem~3.8]{horak2013interlacing}.) This includes the operation of contracting vertices $u,v$ such that $N(u)\cap(N(v)\cup\{v\})=\emptyset$ (see Chen et al. \cite{chen2004interlacing}), where $N(u)$ stands for the neighborhood of $u$.
\item $G'$ is obtained from $G$ by contracting an edge. We only consider edges $uv$ in $G$ such that $d_u, d_v >1$. By edge contracting we mean deleting the edge $(u,v)$ and identifying $u$ and $v$ (Horak-Jost \cite[Definition~4.2]{horak2013interlacing}). Denote the number of common neighbors of $u, v$ by $m$. Then $$\text{when } m\neq 0, k_1=2m, k_2=1+2m;\text{ when }m=0, k_1=0, k_2=2.$$
(Horak-Jost \cite[Theorem~4.1]{horak2013interlacing}, where the unweighted normalized Laplacian case was discussed. We do not know whether it is also true
for weighted normalized Laplacian.)
%In this case, $G'$ can also be considered as the image of an edge-preserving map, but deleting the possible resulting self-loops.
\end{itemize}

\end{remark}
\begin{remark}
To the knowledge of the authors, the above three classes of operations includes all the operations discussed in the literature for interlacing results of normalized Laplacian.
\end{remark}

We prove the following result.
\begin{thm}\label{t:Asymptotic behavior}
Let $G$, $G'$ be two graphs, for which the spectra of corresponding normalized Laplacians satisfy (\ref{e:interlacing}). Then we have
\begin{equation}\label{e:large graphs behavior}
d_1(G,G')\leq C(k_1,k_2,j)\frac{1}{N}.
\end{equation}
\end{thm}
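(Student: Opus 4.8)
The plan is to translate the spectral distance $d_1(G,G')$ into an integral comparison of the two inverse cumulative distribution functions and then bound that integral using the interlacing inequalities \eqref{e:interlacing}. Recall from Lemma \ref{l:cumulative dis} and the discussion following it that $d_1$ is computed by integrating $|F_G^{-1}-F_{G'}^{-1}|$ over $[0,1]$, where the inverse cumulative functions are step functions built from the two spectra. Since the graphs have $N$ and $N-j$ vertices, the natural first step is to place both spectra on a common footing: I would compare the ordered eigenvalues $\lambda_i$ of $G$ with $\lambda'_i$ of $G'$ index by index, using the convention $\lambda_i=0$ for $i\leq 0$ and $\lambda_i=2$ for $i>N$ already supplied in the statement. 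The geometric picture (as in the figures of Section \ref{s:examples}) is that $d_1(G,G')$ equals the area between the two staircase graphs, and this area can be estimated by summing, over each eigenvalue index, the product of a horizontal width (of order $1/N$) with a vertical gap controlled by how far apart the interlacing inequalities allow $\lambda'_i$ and $\lambda_i$ to be.

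The key steps, in order, are as follows. First I would write $d_1(G,G')$ explicitly as an integral of the absolute difference of the two step functions over $[0,1]$, normalizing both so their domains match; because the two graphs have different vertex counts, I would subdivide $[0,1]$ into intervals of length $1/N$ and $1/(N-j)$ and refine to a common partition, so that on each small piece the integrand is the constant absolute difference of a pair of eigenvalues. Second, I would invoke \eqref{e:interlacing}: for each $i$, $\lambda'_i$ lies between $\lambda_{i-k_1}$ and $\lambda_{i+k_2}$, so the relevant vertical gap $|\lambda'_i-\lambda_i|$ is bounded by the spread $\lambda_{i+k_2}-\lambda_{i-k_1}$ of a window of eigenvalues of width $k_1+k_2$ around index $i$. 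Third, and this is the crux, I would sum these gaps: the total vertical variation $\sum_i(\lambda_{i+k_2}-\lambda_{i-k_1})$ telescopes into $(k_1+k_2)$ copies of the full range of the spectrum (a telescoping sum where each eigenvalue is counted at most $k_1+k_2$ times), and since all eigenvalues lie in $[0,2]$ the monotonicity forces this sum to be at most $2(k_1+k_2)$, plus a bounded correction coming from the size difference $j$ and the boundary conventions. Multiplying by the horizontal width $O(1/N)$ then yields the bound $C(k_1,k_2,j)/N$.

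The main obstacle I anticipate is bookkeeping the mismatch between the two partitions of $[0,1]$ coming from $N$ versus $N-j$ vertices, together with the boundary terms introduced by the convention $\lambda_i=0$ for $i\le 0$ and $\lambda_i=2$ for $i>N$. When $j\neq 0$ the staircases have different step widths, so the horizontal contribution to the area is not uniformly $1/N$ but involves the overlap structure of the two grids; I would handle this by a careful but routine comparison, absorbing the discrepancy into the constant $C(k_1,k_2,j)$, and by treating the at most $|j|+k_1+k_2$ ``boundary'' indices (where one of the two eigenvalues is forced to be $0$ or $2$ by the convention) as an $O(1/N)$ lump rather than via the telescoping argument. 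The essential mechanism, however, is the telescoping of the windowed spectral spread combined with the $[0,2]$ confinement: because each eigenvalue is reused in only boundedly many windows, the total area between the staircases is $O\big((k_1+k_2)/N\big)$, which is exactly the claimed estimate \eqref{e:large graphs behavior} with a constant depending only on $k_1,k_2,j$ and not on $N$.
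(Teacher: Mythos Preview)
Your proposal is correct and the essential mechanism---matching $\lambda_i$ with $\lambda'_i$ index by index, bounding $|\lambda_i-\lambda'_i|$ via the interlacing window $\lambda_{i+k_2}-\lambda_{i-k_1}$, and telescoping using the $[0,2]$ confinement---is exactly what the paper does. The one difference worth noting is in how the ``grid mismatch'' you flag as the main obstacle is handled. You plan to work with the exact integral formula $d_1=\int_0^1|F_G^{-1}-F_{G'}^{-1}|$ and refine the two partitions of $[0,1]$ to a common one; this works but requires the bookkeeping you anticipate. The paper instead uses the transport-plan interpretation of $d_1^W$: it simply exhibits a coupling (move mass $1/N$ from $\lambda_i$ to $\lambda'_i$ for $i\le N-j$, then move the leftover $j/N$ mass anywhere at cost at most $2$ per unit), and since any coupling gives an upper bound on the Wasserstein distance, this immediately yields
\[
d_1(G,G')\le \frac{1}{N}\sum_{i=1}^{N-j}|\lambda_i-\lambda'_i|+\frac{2j}{N},
\]
after which your telescoping argument finishes the job. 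This sidesteps the partition refinement entirely and absorbs the size difference $j$ into a single $2j/N$ term, giving the explicit constant $C=2(k_1+k_2+j+1)$.
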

\begin{proof}
By definition, we have $$d_1(G, G')=d_1^W\left(\frac{1}{N}\sum_{i=1}^{N}\delta_{\lambda_i}, \frac{1}{N+j}\sum_{i+1}^{N+j}\delta_{\lambda'_i}\right).$$
By symmetry, w.l.o.g., we can suppose $j\geq 0$. We use a particular transport plan to derive the upper bound estimate. We move the mass $\frac{1}{N}$ from
$\lambda_i$ to $\lambda'_i$ for $i=1,2,\ldots, N-j$. We then move the mass at the remaining positions $\lambda_{N-j+1}, \ldots, \lambda_{N}$ to fill the gaps at $\lambda'_1, \lambda'_2, \ldots, \lambda'_{N-j}$ with a cost for every transportation at most $2$. That is, we have
\begin{align*}
d_1(G, G')&\leq \frac{1}{N}\sum_{i=1}^{N-j}|\lambda_i-\lambda'_i|+\frac{j}{N}\times 2\\
&\leq \frac{1}{N}\sum_{i=1}^{N+j}|\lambda_{i+k_2}-\lambda_{i-k_1}|+\frac{2j}{N}\\
&\leq \frac{k_1+k_2+1}{N}\sum_{i=1}^{N}|\lambda_{i+1}-\lambda_{i}|+\frac{2j}{N}\\
&\leq 2(k_1+k_2+j+1)\frac{1}{N}.
\end{align*}
In the second inequality above, we used interlacing inequalities (\ref{e:interlacing}). This complete the proof.
\end{proof}

\begin{remark}
 The disjoint union of a path of size $N$ and an isolated vertex can be obtained from a path of size $N+1$ by deleting an edge. A cycle of size $N$ can be obtained from a cycle of size $N+1$ by contracting an edge. Recall our calculation in Proposition \ref{p:paths} and \ref{p:cycles}, we see the estimate (\ref{e:large graphs behavior}) is sharp in the order of $1/N$.
\end{remark}
\begin{remark}
 This theorem tells that if two large graphs share similar structure, then the spectral distance between them is small.
\end{remark}

If $G'$ is the graph obtained from $G$ by performing operations such that $k_1, k_2$ are bounded (then $j$ is also bounded), we say $G'$ differs from $G$ by a bounded operation.
\begin{coro}
Let $\{G_i\}_{i=1}^{\infty}$ be a sequence of graphs with size $N_i$ tending to infinity. Assume that for any $i$, $G'_i$ differs from $G_i$ by a uniformly bounded operation, then $$\lim_{i\rightarrow\infty}d_1(G_i, G'_i)=0.$$
\end{coro}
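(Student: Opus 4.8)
The plan is to derive this corollary as a direct consequence of Theorem \ref{t:Asymptotic behavior}. The key observation is that the hypothesis ``$G'_i$ differs from $G_i$ by a uniformly bounded operation'' is precisely the condition needed to control the constant $C(k_1,k_2,j)$ appearing in the estimate \eqref{e:large graphs behavior} uniformly in $i$.

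First I would unwind the definitions. By assumption, each pair $(G_i, G'_i)$ is related by one of the three admissible operations described in Remark \ref{graph operations}, so their spectra satisfy the interlacing inequalities \eqref{e:interlacing} with some parameters $k_1^{(i)}, k_2^{(i)}, j^{(i)}$. The phrase \emph{uniformly bounded operation} means there is a constant $B$, independent of $i$, such that $k_1^{(i)}, k_2^{(i)}\leq B$ for all $i$; and since in each of the three cases $j$ is controlled by $k_1$ and $k_2$ (indeed $|j|\leq k_1+k_2$ from the structure of the operations, as already noted in the text just before the corollary), we also have $|j^{(i)}|\leq 2B$. Thus the quantity $C(k_1^{(i)},k_2^{(i)},j^{(i)}) = 2(k_1^{(i)}+k_2^{(i)}+j^{(i)}+1)$ from the proof of Theorem \ref{t:Asymptotic behavior} is bounded above by a single constant $C_0:=2(4B+1)$ that does not depend on $i$.

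Applying Theorem \ref{t:Asymptotic behavior} to each pair then gives
\begin{equation*}
d_1(G_i, G'_i)\leq C(k_1^{(i)},k_2^{(i)},j^{(i)})\frac{1}{N_i}\leq \frac{C_0}{N_i}.
\end{equation*}
Since $N_i\to\infty$ by hypothesis, the right-hand side tends to $0$, and hence $\lim_{i\to\infty}d_1(G_i,G'_i)=0$, as claimed.

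The only genuine point requiring care—and what I expect to be the main (though mild) obstacle—is making the notion of ``uniformly bounded operation'' precise enough that the constant $C(k_1,k_2,j)$ is uniformly bounded. This amounts to checking that in each of the three operation types the parameters $k_1, k_2, j$ are simultaneously controlled by a single bound $B$: for the proper-difference case $k_1, k_2$ are governed by the size of the subgraph $L$; for the edge-preserving map $k_1=0$ and $k_2=j$ are governed by the number of identified vertices; and for edge contraction $k_1, k_2$ are governed by the number $m$ of common neighbors. In every case $j$ is determined by the same combinatorial data, so a uniform bound on the ``size'' of the operation yields a uniform bound on all three parameters, and the argument closes. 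No further estimate is needed beyond the already-proven Theorem \ref{t:Asymptotic behavior}.
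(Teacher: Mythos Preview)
Your proposal is correct and matches the paper's intended argument: the paper states this corollary without proof, immediately after defining ``bounded operation'' as one for which $k_1,k_2$ (and hence $j$) are bounded, so that it follows directly from Theorem~\ref{t:Asymptotic behavior}. You have simply written out the obvious details, and your verification that $|j|$ is controlled by $k_1,k_2$ in each of the three operation types is exactly the parenthetical ``(then $j$ is also bounded)'' that the paper asserts just before the corollary.
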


\section{Applications to biological networks}\label{s:Biology}
%In this section, we apply the spectral distance $d_1$ to study the evolutionary history of networks in biological systems. In networks such as protein interaction network and genetic network, edge-rewiring and duplication-divergence are two edit operations which have proven to be closely related to some evolutionary mechanism, see \cite{ispolatov2005duplication, kim2012network}. In principle, the spectral distance reflects
% the structure differences of networks and intuitively describes the corresponding evolutionary process.
%As one of our interests is to explore relations between structure differences of a series of networks and the corresponding evolutionary process,
In real biological networks, such as protein interaction networks, edge-rewiring and duplication-divergence are two edit operations which have been proven to be closely related to some evolutionary mechanism, see \cite{ispolatov2005duplication, kim2012network}. For
a spectral analysis of the effect of such operations on protein interaction networks, we refer to \cite{BanJ07}. In this section, we apply the spectral distance $d_1$ to capture evolutionary signals in protein interaction networks through detecting their structural differences.
%In principle, the spectral distance reflects
We evolve graphs by operations of edge-rewiring and duplication-divergence, and then check the connection between the spectral distance $d_1$ and the evolutionary distance (i.e. the number of evolutionary operation steps). We restrict our simulations in the following to unweighted graphs.

Let us first explain the two edit operations on an unweighted graph $G=(V, E)$ explicitly.
\begin{itemize}
\item Edge-rewiring: Select randomly two edges $v_1v_3$, $v_4v_5\in E$ on four distinct vertices $v_1$, $v_3$, $v_4$, $v_5\in V$
(see Fig. \ref{F1:4_1_rewiring_dupliacation}(a)). Delete these two edges $v_1v_3$, $v_4v_5$ and add new edges $v_1v_4$, $v_3v_5$.
%Note that, after this operation, the size of the graph is preserved, and so is the degree sequence.
The size of the graph is preserved by this operation, and so is the degree sequence.
%Two edges with no sharing vertex are chosen randomly, i.e., for a graph $G=(V,E)$, $ij$, $kl\in E$ , and $i$, $j$, $k$, $l$ are four distinct vertices. Then, $ij$ and $kl$ are replaced with $ik$ and $jl$ (Fig. \ref{F1:4_1_rewiring_dupliacation}(a)). With this operation, the size of the graph is preserved, so is the degree sequence.
\item Duplication-divergence: Select randomly a target vertex $v_3\in V$. Add a replica $v_2$ of $v_3$ and new potential edges connecting $v_2$
with every neighbor of $v_3$. Each of these potential edges is activated with certain probability (0.5 in our simulations).
Then if at least one of these potential edges is established, keep the replica $v_2$; otherwise, delete the replica $v_2$ (see Fig. \ref{F1:4_1_rewiring_dupliacation}(b)).
%
%For a graph $G=(V,E)$, a randomly chosen target node is duplicated, that means its replica is introduced and connected to each neighbor of the target node. Each link emanating from the replica is activated with retention probability (0.5 in our simulations). If at least one link is established, the replica is preserved; otherwise the attempt is considered as failure and the graph is not changed. Fig. \ref{F1:4_1_rewiring_dupliacation}(b) shows the scheme of the duplication and divergence event. After several times of this operation, the size and degree sequence of the graph are changed with high probability.
\end{itemize}
\begin{figure}[!htb]
\centering
\includegraphics[width=3in]{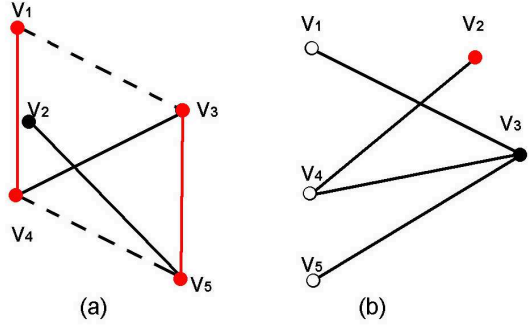}\vspace*{0pt}
\caption{(a) Edge-rewiring;
 %edges $v_{1}v_{3}$ and $v_{4}v_{5}$ are chosen randomly. They are removed, and new edges $v_{1}v_{4}$ and $v_{3}v_{5}$ are added;
(b) Duplication-divergence.
%$v_{3}$ is target node, and $v_{2}$ is its replica. Links $v_{1}v_{2}$ and $v_{2}v_{5}$ are disappeared as a result of divergence, and $v_{2}v_{4}$ is activated.
}
\label{F1:4_1_rewiring_dupliacation}
\end{figure}

Our simulations are designed as follows. We start form a Barab\'{a}si-Albert scale-free graph with $1000$ vertices. This is obtained through a mechanism incorporating growth and preferential attachment from a small complete graph of size $10$, see \cite{BA1999Science,barabasi2003scale}.
For each step of preferential attachment, we add one vertex with two edges.
%In fact, this kind of graphs is a very common type of large real networks, particularly of biological networks.
We remark that the Barab\'{a}si-Albert scale-free graph is not necessarily the best starting model for any biological network. However, it is closer to biological networks in many cases than
the other two popular models, the Erd\H{o}s-R\'{e}nyi random graph and the Watts-Strogatz small-world graph. Therefore, we use it as our starting point here.
%We carry out the edge-rewiring operation on this graph iteratively without any designed principle, i.e. all the correlative vertices and edges are chosen randomly. Then we plot the relation of the spectral distance and the evolutionary distance between new obtained graphs and the original scale-free one, see Fig. \ref{F1:4_1_relatonahip}. We also evolve this graph by duplication-divergence operations and investigate the relation between the two distances in that case correspondingly.
We carry out the operation of edge-rewiring and duplication-divergence on this graph iteratively, then plot the relationship of the spectral distance and the evolutionary distance between new obtained graphs and the original ones.

%
%We take these two operation on graphs without any designed principle, i.e., all the correlative vertices and edges are chosen randomly. Here we set the evolutionary distance by counting the number of evolutionary operation steps.
%To investigate the relationship between the evolutionary distance and the spectral distance, we start from
%one Barab\'{a}si-Albert scale-free graph with 1000 vertices (and two edges for each step of preferential
%attachments), which is a very common type of real networks \cite{barabasi2003scale}. With edge-rewiring and duplication-divergence, we can evolve a group of
%graphs with increasing evolutionary distance. Then, we plot the relation between the spectral distance and
%the evolutionary distance, see Fig. \ref{F1:4_1_relatonahip}.

In the plot of Fig. \ref{F1:4_1_relatonahip}, we observe that the spectral distance between graphs obtained by edge-rewiring operations and the original one increases more quickly than that obtained by duplication-divergence operations. This indicates that, after the same number of operation steps, edge-rewiring brings in more randomness to the graph than duplication-divergence. Recall also the fact that the sizes of graphs are invariant in the former case and vary in the later case.

\begin{figure}[h]
\centering
\includegraphics[width=3in]{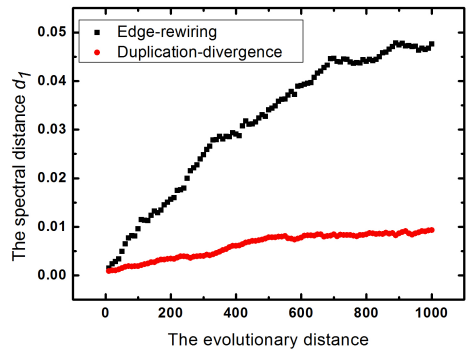}\vspace*{0pt}
\caption{The relation between the spectral distance $d_{1}$ and the evolutionary distance. Edit operations includes (a) edge-rewiring; (b) duplication-divergence.}
\label{F1:4_1_relatonahip}
\end{figure}

Although there is no strictly linear relation between the two distances, the spectral distance increases monotonically with respect to  the evolutionary distance. Based on this crucial point, the spectral distance is very useful for exploring the hiding evolutionary history of large real networks.

\section*{Acknowledgements}
The authors thank J\"urgen Jost for inspiring discussions on the topic of spectral distance of large networks. BH thanks Andreas Thom and Balint Vir\'ag for their patient explanations of random graphs.
SL thanks Willem Haemers for helpful discussions and pointing out the reference \cite{StevanovicProblems2007}. The authors thank the anonymous referees for their valuable comments which improved the presentation of the manuscript.

JG and SL were supported by the International Max Planck Research School "Mathematics in the Sciences". BH was partially supported from the funding of
the European Research Council under the European Union's Seventh
Framework Programme (FP7/2007-2013) / ERC grant agreement
n$^\circ$~267087. SL was partially supported by the EPSRC Grant EP/K016687/1 "Topology, Geometry and Laplacians of Simplicial Complexes".

%\bibliography{Graphdistances}
%\bibliographystyle{alpha}

\end{document}